\let\orgdescriptionlabel\descriptionlabel
\renewcommand*{\descriptionlabel}[1]{%
  \let\orglabel\label
  \let\label\@gobble
  \phantomsection
  \edef\@currentlabel{#1}%
  \let\label\orglabel
  \orgdescriptionlabel{#1}%
}
\newtheorem{thm}{Theorem}[section]
\newtheorem{lem}[thm]{Lemma}
\newtheorem{prop}[thm]{Proposition}
\newtheorem{cor}[thm]{Corollary}
\newenvironment{customthm}[1]
  {\innercustomthm}
  {\endinnercustomthm}
\theoremstyle{definition}
\newtheorem{defi}[thm]{Definition}
\theoremstyle{remark}
\newtheorem{rem}[thm]{Remark}
\newcommand{\op}{\normalfont{^{op}}}
\newcommand{\Mod}[1]{\operatorname{mod} #1}
\newcommand{\Hom}[3]{\operatorname{Hom}_{#1}\left(#2,#3\right) }
\newcommand{\Tra}[2]{\operatorname{Tr}(#1,#2)}
\newcommand{\Tom}[3]{\operatorname{Hom}_{#1}(#2,#3) }
\newcommand{\End}[2]{\operatorname{End}_{#1}\left(#2\right) }
\newcommand{\END}[2]{\operatorname{End}_{#1}(#2) }
\newcommand{\Soc}[1]{\operatorname{Soc}#1}
\newcommand{\Top}[1]{\operatorname{Top}#1}
\newcommand{\Rad}[1]{\operatorname{Rad} #1}
\newcommand{\RAD}[2]{\operatorname{Rad}^{#1} #2}
\newcommand{\SOC}[2]{\operatorname{Soc}_{#1} #2}
\newcommand{\USOC}[2]{\underline{\operatorname{Soc}}_{#1} #2}
\newcommand{\Ima}[1]{\operatorname{Im}#1}
\newcommand{\Ker}[1]{\operatorname{Ker}#1}
\newcommand{\Ext}[4]{\operatorname{Ext}_{#1}^{#2}\left(#3,#4 \right)}
\newcommand{\EXT}[4]{\operatorname{Ext}_{#1}^{#2}(#3,#4 )}
\newcommand{\Z}{\mathbb{Z}}
\newcommand{\Zp}{\Z_{> 0}}
\newcommand{\B}[1]{\left(#1\right)}
\newcommand{\Gl}[1]{\operatorname{gl.dim}#1}
\newcommand{\dssl}[1]{\operatorname{\Delta.ssl}#1}
\newcommand{\Add}[1]{\operatorname{add}#1}
\newcommand{\LL}[1]{\operatorname{LL}(#1)}
\begin{document}

\title[The Ringel dual of the ADR algebra]{The Ringel dual of the Auslander-Dlab-Ringel algebra}
\author{Teresa Conde}
\address{Institute of Algebra and Number Theory, University of Stuttgart\\ Pfaffenwaldring 57, 70569 Stuttgart, Germany}
\email{\href{mailto:tconde@mathematik.uni-stuttgart.de}{\nolinkurl{tconde@mathematik.uni-stuttgart.de}}}
\author{Karin Erdmann}
\address{Mathematical Institute,
University of Oxford\\ Radcliffe Observatory Quarter,
OX2 6GG, United Kingdom}
\email{\href{mailto:erdmann@maths.ox.ac.uk}{\nolinkurl{erdmann@maths.ox.ac.uk}}}
\thanks{The first named author was supported by the grant SFRH/BD/84060/2012 of Funda\c{c}\~ao para a Ci\^encia e a Tecnologia while part of this work was carried out.}
\subjclass[2010]{Primary 16S50, 16W70. Secondary 16G10, 16G20.}
\keywords{Quasihereditary algebra, Ringel dual, ADR algebra}
\date{\today}

\begin{abstract}
The ADR algebra $R_A$ of a finite-dimensional algebra $A$ is a quasihereditary algebra. In this paper we study the Ringel dual $\mathcal{R}(R_A)$ of $R_A$. We prove that $\mathcal{R}(R_A)$ can be identified with $\B{R_{A\op}}\op$, under certain `minimal' regularity conditions for $A$. We also give necessary and sufficient conditions for the ADR algebra to be Ringel selfdual.
\end{abstract}

\maketitle
\tableofcontents

\section{Introduction}
\label{sec:overviewpart2}

Quasihereditary algebras were introduced by Cline, Parshall and Scott (\cite{clineparshallscott}) to investigate highest weight categories arising in algebraic Lie theory, and they were extensively studied from the perspective of finite-dimensional algebras by Dlab and Ringel (\cite{MR1211481,MR987824}). Since then, quasihereditary algebras have been discovered in many other contexts. 

In particular, to every finite-dimensional algebra $A$ (or more generally, to every Artin algebra $A$) there is a canonical quasihereditary algebra $\tilde{R}_A$ which contains $A$ as an idempotent subalgebra. That is, there is an idempotent $\xi$ in $\tilde{R}_A$ such that  $A = \xi \tilde{R}_A \xi$. The algebra $\tilde{R}_A$ was introduced by Auslander in \cite{MR0349747}, to show that any algebra occurs as idempotent subalgebra of an algebra of finite global dimension. Subsequently, Dlab and Ringel showed in \cite{MR943793} that $\tilde{R}_A$ is in fact quasihereditary. To define $\tilde{R}_A$, take the direct sum of all radical powers of $A$, that is
\[\tilde{G}=\bigoplus_{i= 1}^{L} A/\RAD{i}{A},
\] 
where $L$ denotes the Loewy length of $A$. Then $\tilde{R}_A:=\END{A}{\tilde{G}}\op$. To study its representation theory, one considers the basic version of $\tilde{R}_A$ instead. We denote such basic algebra by $R_A$ and call it the Auslander--Dlab--Ringel algebra (ADR algebra) associated to $A$.

In specific cases, an algebra $A$ may occur in many different ways as an idempotent subalgebra of some quasihereditary algebras. A famous example is the Schur algebra $S\B{n,r}$. For $n\geq r$, it has the group algebra of a symmetric group of degree $r$ as an idempotent subalgebra. One might think of ADR algebras as analogues of Schur algebras, which exist in general.

\bigskip

In \cite{MR1211481}, Ringel proved that quasihereditary algebras come in pairs. For every quasihereditary algebra $\B{B, \Phi, \sqsubseteq}$, there is another quasihereditary algebra $\B{\mathcal{R}\B{B}, \Phi, \sqsubseteq\op}$, unique up to isomorphism, such that $\mathcal{R}\B{\mathcal{R}\B{B}}$ is again Morita equivalent to $B$. The algebra $\mathcal{R}\B{B}$ is defined as the endomorphism algebra $\End{B}{T}\op$ of a special $B$-module $T$, called the characteristic tilting module. We call $\mathcal{R}\B{B}$ a Ringel dual of $B$. 

%In \cite{Ri} , Ringel proved that quasihereditary algebras come in pairs. For any quasihereditary algebra $B$, there is another quasihereditary algebra $\mathcal{R}\B{B}$, unique up to Morita equivalence, and $\mathcal{R}\B{\cR(B))$ is Morita equivalent to $B$.  The algebra $\cR(B)$ is defined as the endomorphism algebra ${\rm End}_B(T)^{op}$ where $T$ is a  distinguished tilting module, and it
%is known as the Ringel dual of $B$.

The aim of this paper is to study the Ringel dual $\mathcal{R}\B{R_A}$ of an ADR algebra $R_A$. Quasihereditary algebras arising in Lie theory are often isomorphic to their own Ringel dual. For example, this is the case for category $\mathcal{O}$ (\cite{MR1445716}), or for Schur algebras $S(n,r)$ for $n\geq r$, and $q$-analogues (\cite{MR1707336}). However, Ringel selfduality appears to be less prevalent in general.%for the so-called strongly quasihereditary algebras (see the remarks in \cite[Appendix, A$.3$]{RingelIyama}), and such class of algebras includes the ADR algebra.

Nevertheless, there is a natural description of the Ringel dual $\mathcal{R}\B{R_A}$ of an ADR algebra $R_A$ when the radical structure of $A$ satisfies certain symmetry conditions. Our main result is as follows.

\begin{customthm}{A}[Main Theorem]
\label{customthm:A}
Let $A$ be a finite-dimensional algebra with Loewy length $L$ and assume that all projective and injective indecomposable $A$-modules are rigid with Loewy length $L$. The Ringel dual of the quasihereditary algebra $R_A$ is isomorphic to the opposite of the ADR algebra of $A\op$. That is,% if $A$ is basic and $T$ is taken multiplicity-free then
\[\mathcal{R}\B{R_A}  \cong  \B{R_{A\op}}\op.
\]
\end{customthm} 

Here a module is said to be rigid if its radical series coincides with its socle series. Connected selfinjective algebras with radical cube zero but radical square nonzero trivially satisfy the conditions of Theorem \ref{customthm:A}. Such class of algebras was studied in \cite{ES}, and contains blocks of symmetric group algebras of weight $1$ (see \cite[Section $7$]{MR3510398} for an example). According to \cite{Scopes2}, blocks of symmetric group algebras of weight $2$ also satisfy the assumptions of Theorem \ref{customthm:A} when the underlying field has characteristic $p>2$.

Along the way, we prove preliminary results of independent interest. For instance, in Theorem \ref{thm:claim2}, a complete description of the $\Delta$-filtrations of the tilting $R_A$-modules is given in terms of the socle series of the injective indecomposable $A$-modules, when $A$ is an algebra whose projectives are rigid modules with Loewy length $L$ (here $L$ denotes the Loewy length of $A$). Philosophically speaking, the technicalities encountered towards the proof of Theorem \ref{customthm:A} are due to the fact that we are seeking to identify two algebras in a ``noncanonical way".

Furthermore, we give some details to explain why the assumptions on the Loewy length and on the rigidity are needed for Theorem \ref{customthm:A} to hold. First, it is necessary that the projective cover $P_i$ and the injective hull $Q_i$ of a simple module $L_i$ of $A$ must have the same Loewy length; otherwise there cannot be a canonical correspondence between the labelling sets for weights between $\mathcal{R}\B{R_A}$ and $\B{R_{A\op}}\op$. In fact, we show that the conditions in the statement of Theorem \ref{customthm:A} are somehow `minimal'.

\begin{customthm}{B}
\label{customthm:BBB}
Let $A$ be a finite-dimensional connected algebra with Loewy length $L$. Suppose that $\dim \End{A}{L_i}=1$ for every simple module $L_i$, and assume that the projective cover $P_i$ and the injective hull $Q_i$ of $L_i$ have both the same Loewy length $l_i$. If the Cartan matrix of $\mathcal{R}(R_A)$ coincides with the Cartan matrix of $(R_{A\op})\op$ (up to a `natural' permutation of rows and colums), then $P_i$ and $Q_i$ are rigid modules, and $l_i=L$ for every $i$.
\end{customthm}

Our final result shows that the algebra $R_A$ is not usually Ringel selfdual.

\begin{customthm}{C}
\label{customthm:BB}
The ADR algebra $R_A$ is Ringel selfdual if and only if $A$ is a selfinjective Nakayama algebra.
\end{customthm}
It was already proved in \cite{T} that selfinjective Nakayama algebras are Ringel selfdual, but in our setting this comes out as a special case.

\bigskip

The layout of the paper is the following. Section \ref{sec:background} contains background on quasihereditary algebras and on the ADR algebra. Section \ref{sec:theorema} is dedicated to the proof of our main result: Theorem \ref{customthm:A}. We start out this section by providing some evidence that supports the statement of Theorem \ref{customthm:A}. For the proof of this theorem, we collect a number of auxiliary results which relate the $\Delta$- and the $\nabla$-filtrations of the tilting modules over the ADR algebra, the highlight being Theorem \ref{thm:claim2}. In Section \ref{sec:furtherremarks}, we show that the regularity conditions for the algebra $A$ assumed in the statement of Theorem \ref{customthm:A} are, in a certain sense, minimal. This is attained by comparing several multiplicities for the algebras $\mathcal{R}\B{R_A}$ and $\B{R_{A\op}}\op$, and culminates with the proof of Theorem \ref{customthm:BBB}. In Section \ref{sec:last}, we discuss Ringel selfduality for the ADR algebra and prove Theorem \ref{customthm:BB}.

\section{Background}
\label{sec:background}
In this section we give some background on quasihereditary algebras and on the ADR algebra.

Throughout this paper the letters $B$ and $A$ shall denote arbitrary Artin algebras over some underlying commutative artinian ring $K$. All the modules will be finitely generated left modules. The notation $\Mod{B}$ will be used for the category of (finitely generated) $B$-modules. 

The case when $K$ is a field is perhaps the most significant one. In this situation, $A$ is just a finite-dimensional $K$-algebra, and the modules are finite-dimensional. For the general case, the technology works the same, and details may be found in the text book \cite{MR1476671}, Chapter II. 

\subsection{Quasihereditary algebras}
Given an Artin algebra $B$, we may label the isomorphism classes of simple $B$-modules by the elements of a finite poset $(\Phi , \sqsubseteq )$. Denote the simple $B$-modules by $L_i$, $i \in \Phi$, and use the notation $P_i$ (resp.~$Q_i$) for the projective cover (resp.~injective hull) of $L_i$. 

Let $\Delta\B{i}$ be the largest quotient of $P_i$ whose composition factors are all of the form $L_j$, with $j\sqsubseteq i$, and call $\Delta\B{i}$ the \emph{standard module} with label $i\in\Phi$. Dually, denote the \emph{costandard module} with label $i$ by $\nabla\B{i}$, i.e.~let $\nabla\B{i}$ be the largest submodule of $Q_i$ with all composition factors of the form $L_j$, with $j\sqsupseteq i$. 

Denote by $\Delta$ (resp.~$\nabla$) the set of all standard modules (resp.~costandard modules). Given a class of modules $\Theta$, let $\mathcal{F}\B{\Theta}$ be the category of all $B$-modules which have a \emph{$\Theta$-filtration}, that is, a filtration whose factors are isomorphic to modules in $\Theta$.

The notation $[M:L]$ will be used for the multiplicity of a simple module $L$ in the composition series of $M$. In a similar manner, $(M:\Delta (i))$ shall denote the multiplicity of $\Delta (i)$ in a $\Delta$-filtration of a module $M$ in $\mathcal{F}(\Delta)$. Define $(M:\nabla (i))$, $M \in \mathcal{F}(\nabla)$, in the same way.
\begin{defi}
We say that $\B{B,\Phi, \sqsubseteq}$ is \emph{quasihereditary} if the following hold for every $i \in \Phi$:
\begin{enumerate}
\item $[\Delta (i): L_i]=1$;
\item $P_i \in \mathcal{F}\B{\Delta}$;
\item $\B{P_i: \Delta\B{i}}=1$, and $\B{P_i: \Delta\B{j}}\neq 0 \Rightarrow j\sqsupseteq i$.
\end{enumerate} 
\end{defi}

An algebra $\B{B,\Phi, \sqsubseteq}$ is quasihereditary if and only if $\B{B\op,\Phi, \sqsubseteq}$ is quasihereditary. The standard modules and the costandard modules have striking homological properties. The following is well known:
\begin{enumerate}
\item if $\Ext{B}{1}{\Delta (i)}{\Delta (j)}\neq 0$, then $i\sqsubset j$;
\item $\Ext{B}{1}{\Delta (i)}{\nabla (j)} =0$ for all $i,j \in \Phi$.
\end{enumerate}

\subsubsection{Ringel duality}
In \cite{last} Ringel introduced the concept of \emph{characteristic tilting module} over a quasihereditary algebra. This is a multiplicity free $B$-module $T$, satisfying $\mathcal{F}\B{\Delta} \cap \mathcal{F}\B{\nabla} =\Add{T}$. Here for a module $M$, we denote by $\Add{M}$ the full subcategory of $\Mod{B}$ consisting of all modules isomorphic to a direct summand of a finite direct sum of copies of $M$.

It is common to refer to a module in  $\mathcal{F}\B{\Delta} \cap \mathcal{F}\B{\nabla}$ as a \emph{tilting module}. The indecomposable tilting modules are in bijection with the elements of $\Phi$. We write $T=\bigoplus_{i \in \Phi} T\B{i}$ -- the indecomposable summands $T\B{i}$ are characterised by the following result.
\begin{lem}[\cite{last}]
\label{lem:newone}
Let $\B{B, \Phi, \sqsubseteq}$ be an arbitrary quasihereditary algebra. For every $i$ in $\Phi$ there is a short exact sequence
\[
%\label{eq:tilting}
\begin{tikzcd}[ampersand replacement=\&]
0 \arrow{r} \& \Delta\B{i} \arrow{r}{\phi} \& T\B{i} \arrow{r} \& X\B{i} \arrow{r} \& 0
\end{tikzcd},
\]
with $\phi$ a left minimal $\mathcal{F}\B{\nabla}$-approximation of $\Delta\B{i}$ and with $X\B{i}$ a module lying in $\mathcal{F}\B{\{\Delta \B{j} : j \sqsubset i\}}$. 
\end{lem}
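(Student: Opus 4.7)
The plan is to build $T(i)$ as the terminal object of a chain of iterated self-extensions of $\Delta(i)$ by standard modules $\Delta(j)$ with $j\sqsubset i$, driven by the obstruction $\Ext{B}{1}{\Delta(j)}{-}$. As preparation I would record the standard characterisation
\[
M \in \mathcal{F}(\nabla) \quad \Longleftrightarrow \quad \Ext{B}{1}{\Delta(j)}{M} = 0 \text{ for every } j \in \Phi,
\]
which follows from the existence of $\Delta$-filtrations on projective indecomposables, combined with the vanishing $\Ext{B}{1}{\Delta(j)}{\nabla(k)}=0$ already recorded in the paper, together with dimension shifting.

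Next I would define a sequence of modules as follows. Set $M_0 := \Delta(i)$, and having constructed $M_n$, if there exists some $j\in \Phi$ with $\Ext{B}{1}{\Delta(j)}{M_n}\neq 0$, form a non-split extension
\[
0 \to M_n \to M_{n+1} \to \Delta(j) \to 0.
\]
By induction on $n$, $M_n/\Delta(i)$ sits in $\mathcal{F}(\{\Delta(k):k\sqsubset i\})$. Applying $\Hom{B}{\Delta(j)}{-}$ to $0\to \Delta(i)\to M_n\to M_n/\Delta(i)\to 0$ and invoking the homological property $\Ext{B}{1}{\Delta(j)}{\Delta(k)}\neq 0 \Rightarrow j\sqsubset k$, one verifies that the index $j$ forming the new extension must itself satisfy $j\sqsubset i$, so the inductive hypothesis is preserved.

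To argue that this process terminates, I would give an a priori bound on the multiplicities $(M_n : \Delta(j))$ for $j\sqsubset i$, using a dimension count involving the finiteness of Hom- and Ext-groups between the standard modules, or by induction on the subposet $\{k:k\sqsubset i\}$. Setting $T(i) := M_\infty$, one has $T(i) \in \mathcal{F}(\nabla)$ by the characterisation above, and the short exact sequence $0 \to \Delta(i) \to T(i) \to X(i) \to 0$ has $X(i) \in \mathcal{F}(\{\Delta(j) : j \sqsubset i\})$ by construction. The universal (approximation) property follows by applying $\Hom{B}{-}{Y}$ for $Y\in\mathcal{F}(\nabla)$: since $X(i) \in \mathcal{F}(\Delta)$ and $Y\in\mathcal{F}(\nabla)$, the term $\Ext{B}{1}{X(i)}{Y}$ vanishes, so any $f:\Delta(i)\to Y$ lifts along $\phi$. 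Left minimality is then obtained by passing to the unique indecomposable summand of $T(i)$ whose image under the projection of $\phi$ is still a monomorphism; this summand has simple top $L_i$, which pins down $T(i)$ up to isomorphism.

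The step I expect to be the most delicate is the termination of the iterative construction: the a priori bound on the multiplicities $(M_n : \Delta(j))$ has to be established without invoking the very module $T(i)$ whose existence we are trying to prove. A clean way around this is to proceed by induction on the subposet below $i$, packaging the inductively constructed $T(j)$ ($j\sqsubset i$) into a single module $U_i\in\Add(\bigoplus_{j\sqsubset i} T(j))\subseteq \mathcal{F}(\nabla)$ and realising $T(i)$ as a single universal extension of $\Delta(i)$ by $U_i$; the finiteness of $\Ext{B}{1}{U_i}{\Delta(i)}$ then replaces the need for an iterative bound.
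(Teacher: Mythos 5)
The paper does not reprove this lemma; it cites it to Ringel's original article \cite{last}. Your outline is essentially Ringel's construction, so the approach is the right one, but there are two points that need repair. First, the termination argument as sketched in your last paragraph is still incomplete: forming a universal extension $0 \to \Delta(i) \to E \to U_i^{d} \to 0$ with $U_i \in \Add\bigl(\bigoplus_{j\sqsubset i} T(j)\bigr)$ and applying $\Hom{B}{U_i}{-}$ gives $\Ext{B}{1}{U_i}{E}=0$ (using $\Ext{B}{1}{U_i}{U_i}=0$ because $U_i\in\mathcal{F}(\Delta)\cap\mathcal{F}(\nabla)$), but this only controls extensions by the tilting modules $T(j)$, not by the standard modules $\Delta(j)$, and the characterisation of $\mathcal{F}(\nabla)$ you invoke is stated in terms of $\Ext{B}{1}{\Delta(j)}{-}$. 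To pass from $\Ext{B}{1}{T(j)}{E}=0$ to $\Ext{B}{1}{\Delta(j)}{E}=0$ one uses the sequence $0\to\Delta(j)\to T(j)\to X(j)\to 0$ and must control $\Ext{B}{2}{X(j)}{E}$; that requires a further descending induction along the poset (or the sharper characterisation $\mathcal{F}(\nabla)=\{M : \Ext{B}{k}{\Delta(j)}{M}=0 \text{ for all } j,\ k\geq 1\}$). This extra step is genuine and not subsumed by ``finiteness of $\Ext{B}{1}{U_i}{\Delta(i)}$''.

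Second, the final sentence asserting that the relevant indecomposable summand ``has simple top $L_i$'' is false in general; the indecomposable tilting module $T(i)$ usually does not have simple top (for the ADR algebra itself, $T(i,1)\cong Q_{i,l_i}$ is an injective whose top is typically far from simple). What actually pins down $T(i)$ among the summands is that it is the unique indecomposable summand through which $\phi$ factors as a monomorphism, equivalently the unique one with $(\,\cdot\,:\Delta(i))=1$ and all remaining $\Delta$-factors indexed by $j\sqsubset i$; left minimality of the approximation is what guarantees this summand is well defined. Aside from these two points, the structure of your argument — the Ext-vanishing characterisation of $\mathcal{F}(\nabla)$, the use of $\Ext{B}{1}{\Delta(j)}{\Delta(k)}\neq 0\Rightarrow j\sqsubset k$ to confine the new $\Delta$-factors below $i$, and induction on the subposet below $i$ — is the correct one.
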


The endomorphism algebra $\END{B}{T}\op$ is known as the \emph{Ringel dual} of $B$, and shall be denoted by $\mathcal{R}\B{B}$. Let $P_i'$ be the projective indecomposable $\mathcal{R}\B{B}$-module $\Hom{B}{T}{T\B{i}}$ and let $L_i'$ be its top. According to \cite{last}, the algebra $\mathcal{R}\B{B}$ is quasihereditary with respect to the poset $(\Phi , \sqsubseteq \op )$, and moreover $\mathcal{R}\B{\mathcal{R}\B{B}}$ is Morita equivalent to $B$. Denote the standard and the costandard $\mathcal{R}\B{B}$-modules by $\Delta ' \B{i}$ and $\nabla ' \B{i}$.

\subsection{The ADR algebra as an ultra strongly quasihereditary algebra}
Fix an Artin algebra $A$. Given a module $M$ in $\Mod{A}$, we shall denote its \emph{Loewy length} by $\LL{M}$. Let $A$ have Loewy length $L$ (as a left module). We want to study the basic version of the endomorphism algebra of $\bigoplus_{j=1}^L A/ \RAD{j}{A}$.

For this, let $\{P_1, \ldots, P_n\}$ be a complete set of projective indecomposable $A$-modules and let $l_i$ be the Loewy length of $P_i$. Define %\label{eq:generatoradr}
\[
G=G_A:=\bigoplus_{i=1}^{n} \bigoplus_{j=1}^{l_i} P_i/ \RAD{j}{P_i}.
\]

The \emph{Auslander--Dlab--Ringel algebra of $A$} (ADR algebra of $A$) is defined as
\[
R_A:=\End{A}{G}\op .
\]
Observe that the functor $\Tom{A}{G}{-}:\Mod{A} \longrightarrow\Mod{R_A}$ is fully faithful as $G$ is a generator of $\Mod{A}$. 

The projective indecomposable $R_A$-modules are given by
\[
P_{i,j}:=\Hom{A}{G}{P_i/ \RAD{j}{P_i}},
\]
for $1 \leq i \leq n$, $1 \leq j \leq l_i$.

Denote the simple quotient of $P_{i,j}$ by $L_{i,j}$ and define
\begin{equation}
\label{eq:posetadr}
\Lambda := \{ (i,j):\, 1 \leq i \leq n, \, 1 \leq j \leq l_i \},
\end{equation}
so that $\Lambda$ labels the simple $R_A$-modules. Define a partial order, $\unlhd$, on $\Lambda$ by
\begin{equation}
\label{eq:guentanaborrabo}
(i,j) \lhd (k,l) \Leftrightarrow j> l.
\end{equation}

It turns out that the ADR algebra $R_A$ is a quasihereditary algebra with respect to the poset $\B{\Lambda, \unlhd}$, and its quasihereditary structure is specially neat.

Let us clarify the previous assertion. Following Ringel (\cite{RingelIyama}), a quasihereditary algebra $\B{B, \Phi, \sqsubseteq}$ is said to be \emph{right strongly quasihereditary} if $\Rad{\Delta\B{i}} \in \mathcal{F}\B{\Delta}$ for all $i \in \Phi$. This property holds if and only if the category $\mathcal{F}\B{\Delta}$ is closed under submodules (see \cite{yey}, \cite[Lemma $4.1$*]{MR1211481} and \cite[Appendix]{RingelIyama}). 

Let $(B, \Phi , \sqsubseteq )$ be an arbitrary quasihereditary algebra, as before. Additionally, suppose that $B$ satisfies the following two conditions:
\begin{description}
\item[(A1)\label{item:A1n}] $\Rad{\Delta\B{i}} \in \mathcal{F}\B{\Delta}$ for all $i \in \Phi$ (that is, $B$ is right strongly quasihereditary);
\item[(A2)\label{item:A2n}] $Q_i \in \mathcal{F}\B{\Delta}$ for all $i \in \Phi$ such that $\Rad{\Delta\B{i}}=0$.
\end{description}
We call these algebras \emph{right ultra strongly quasihereditary} algebras (RUSQ algebras, for short). The algebra $(B, \Phi , \sqsubseteq )$ is said to be a \emph{left ultra strongly quasihereditary algebra} (LUSQ) if the quasihereditary algebra $(B\op, \Phi , \sqsubseteq )$ is RUSQ.
\begin{rem}
It was proved in \cite[§$2.5.1$]{thesis} that the definition of RUSQ algebra given in \cite{MR3510398} is equivalent to the one above.
\end{rem}

According to \cite[§4]{MR3510398}, $(R_A, \Lambda, \unlhd)$ is a RUSQ algebra. The ADR algebra is the prototype of a RUSQ algebra.
\begin{thm}[{\cite[§4]{MR3510398}}]
\label{prop:standard}
The algebra $(R_A, \Lambda, \unlhd)$ is a RUSQ algebra. If $Q_i$ is the injective $A$-module with simple socle isomorphic to $\Top{P_i}$, then the following identity holds
\begin{equation*}
\label{eq:specialinj}
T\B{i,1}=Q_{i,l_i}=\Hom{A}{G}{Q_i}.
\end{equation*}
\end{thm}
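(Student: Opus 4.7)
The plan is to identify the standard modules of $R_A$ via the functor $F := \Hom{A}{G}{-}$ applied to the radical filtration of the summands of $G$, deduce the RUSQ axioms, and then identify $T(i, 1)$ using Lemma \ref{lem:newone}.

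To build the standard modules, I would start, for each $j \geq 2$, from the short exact sequence
\[
0 \to \RAD{j-1}{P_i}/\RAD{j}{P_i} \to P_i/\RAD{j}{P_i} \to P_i/\RAD{j-1}{P_i} \to 0
\]
in $\Mod{A}$. Applying the left-exact functor $F$ produces an embedding $F(\RAD{j-1}{P_i}/\RAD{j}{P_i}) \hookrightarrow P_{i,j}$. Iterating this along the radical filtration of $P_i/\RAD{j}{P_i}$ yields a filtration of $P_{i,j}$ by $R_A$-submodules; a careful analysis of tops, using the full-faithfulness of $F$ on $\Add{G}$ together with standard Hom-computations in $\Mod{A}$, should refine this to a $\Delta$-filtration of $P_{i,j}$ whose top factor is $\Delta(i, j)$ and whose remaining factors are $\Delta(k, l)$ with $l < j$, i.e.~$(k, l) \rhd (i, j)$. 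The three quasihereditary axioms then follow directly from this description; in particular $\Delta(i, 1) = P_{i, 1} = F(L_i)$ at the maximal labels, and $\Delta(i, l_i) = L_{i, l_i}$ at the minimal ones.

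Next, I would verify the RUSQ axioms. Axiom (A1), $\Rad{\Delta(i, j)} \in \mathcal{F}(\Delta)$, should follow by applying the same filtration procedure to the submodule $\RAD{1}{P_i}/\RAD{j}{P_i}$ of $P_i/\RAD{j}{P_i}$, which corresponds under $F$ to $\Rad{\Delta(i, j)}$. For axiom (A2), I would note that $\Rad{\Delta(i, j)} = 0$ forces $\Delta(i, j) = L_{i, j}$, which in turn forces $j = l_i$ (otherwise the filtration above exhibits a nonzero $\Delta$-summand in the radical, contradicting $\Delta(i, j)$ being simple). It then remains to verify $Q_{i, l_i} = F(Q_i) \in \mathcal{F}(\Delta)$, where $Q_i$ is the injective hull of $L_i$; my approach is a dual argument using the socle filtration of $Q_i$ (which has length $\leq L$) in place of the radical filtration of $P_i$.

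For the final identification $T(i, 1) = Q_{i, l_i} = F(Q_i)$: since $(i, 1)$ is maximal in $(\Lambda, \unlhd)$, $\Delta(i, 1) = P_{i, 1} = F(L_i)$, and the socle inclusion $L_i = \Soc{Q_i} \hookrightarrow Q_i$ yields via $F$ an embedding $\phi : \Delta(i, 1) \hookrightarrow F(Q_i)$. By Lemma \ref{lem:newone}, uniqueness of $T(i, 1)$ follows once I verify that $F(Q_i) \in \mathcal{F}(\Delta) \cap \mathcal{F}(\nabla)$ and that $\phi$ is a left minimal $\mathcal{F}(\nabla)$-approximation. The $\mathcal{F}(\Delta)$-containment is Step (A2). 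The main obstacle will be the $\mathcal{F}(\nabla)$-containment, which requires a dual filtration argument translating the socle filtration of $Q_i$ into a $\nabla$-filtration of $F(Q_i)$—this is where the interplay between $\Delta$- and $\nabla$-structures for the ADR algebra must be worked out explicitly. Once both filtrations are in place, the identification $F(Q_i) = Q_{i, l_i}$ follows by computing $\Soc{F(Q_i)} = L_{i, l_i}$ and checking that $F(Q_i)$ is an indecomposable injective $R_A$-module, while minimality of $\phi$ is read off from the $\Delta$-filtration structure of its cokernel.
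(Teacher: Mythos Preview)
This statement is quoted from \cite[§4]{MR3510398} and the present paper supplies no proof of its own, so there is nothing here to compare your argument against directly. Your plan for the quasihereditary and RUSQ structure is the standard one and should go through with care.

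The genuine gap is in your route to $T(i,1)=F(Q_i)$. You propose to obtain $F(Q_i)\in\mathcal{F}(\nabla)$ by ``translating the socle filtration of $Q_i$ into a $\nabla$-filtration of $F(Q_i)$'', but this is the wrong filtration: applying $F$ to the socle series of any $M\in\Mod{A}$ produces a $\Delta$-filtration of $F(M)$ (this is exactly what you already invoke for axiom~(A2), and it is the content of Theorem~\ref{thm:socdelta}), never a $\nabla$-filtration. The $\nabla$-filtration of $T(i,1)$ is the tower $T(i,l_i)\subset\cdots\subset T(i,1)$ of Theorem~\ref{thm:newprop}, which bears no direct relation to the socle series of $Q_i$. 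The clean way to get $\mathcal{F}(\nabla)$-containment is to prove that $F(Q_i)$ is an \emph{injective} $R_A$-module, since injectives always lie in $\mathcal{F}(\nabla)$. This is not automatic from $Q_i$ being $A$-injective; one uses the adjunction $\Hom{R_A}{N}{F(Q_i)}\cong\Hom{A}{G\otimes_{R_A}N}{Q_i}$ together with the fact that $G$ is a projective \emph{right} $R_A$-module. The latter holds because $A=\bigoplus_i P_i$ is a summand of $G$, so that $G\cong\Hom{A}{A}{G}$ is a direct summand of $\Hom{A}{G}{G}=R_A$ as a right $R_A$-module. Once injectivity is in hand, $\Soc{F(Q_i)}=L_{i,l_i}$ gives $F(Q_i)=Q_{i,l_i}$, and Lemma~\ref{lem:newone} applied to the inclusion $\Delta(i,1)\hookrightarrow F(Q_i)$ (whose cokernel lies in $\mathcal{F}(\{\Delta(k,l):l>1\})$ by your (A2) argument) identifies $F(Q_i)$ with $T(i,1)$.
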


\subsubsection{Properties of RUSQ algebras}
Our results about the ADR algebra rely on its properties as a RUSQ algebra. Next, we outline the main features of RUSQ algebras.

Let $\B{B,\Phi, \sqsubseteq }$ be a RUSQ algebra. It is always possible to label the elements in $\Phi$ as
\[
\Phi = \{(i,j):\, 1 \leq i \leq n ,\, 1 \leq j \leq l_i \},
\]
for certain $n,l_i \in \Zp$, so that $[\Delta\B{k,l}:L_{i,j}]\neq 0$ implies that $k=i$ and $j\geq l$ (see \cite[§5]{MR3510398}). We shall always assume that the elements in $\Phi$ are labelled in such a way. The labelling poset $\B{\Lambda , \unlhd}$ for the ADR algebra defined in \eqref{eq:posetadr} and \eqref{eq:guentanaborrabo} is compatible with this.

The following theorem summarises the main properties of the RUSQ algebras.

\begin{thm}[{\cite[§5]{MR3510398}}]
\label{thm:newprop}
Let $\B{B,\Phi, \sqsubseteq }$ be a RUSQ algebra. The following hold:
\begin{enumerate}
\item $\mathcal{F}\B{\Delta}$ is closed under submodules;
\item $\Rad{\Delta\B{i,j}}=\Delta \B{i,j+1}$ for $j < l_i$, and $\Delta\B{i,l_i}=L_{i,l_i}$;
\item each $\Delta\B{i,j}$ is uniserial and has composition factors $L_{i,j}, \ldots, L_{i,l_i}$, ordered from the top to the socle;
\item $Q_{i,l_i} \cong T\B{i,1}$ and $T\B{i,j+1} \subseteq T\B{i,j}$ for $j < l_i$;
\item $T\B{i,l_i}\cong \nabla\B{i,l_i}$, and for $j < l_i$ we have $T\B{i,j}/T\B{i,j+1}\cong \nabla\B{i,j}$ and $Q_{i,j}\cong T\B{i,1}/T\B{i,j+1}$;
\item for $M\in \mathcal{F}\B{\Delta}$, the total number of standard modules appearing in a $\Delta$-filtration of $M$ is given by $\sum_{i=1}^n [M:L_{i,l_i}]$;
\item a module $M$ belongs to $\mathcal{F}\B{\Delta}$ if and only if $\Soc{M}$ is a (finite) direct sum of modules of type $L_{i,l_i}$.
\end{enumerate}
\end{thm}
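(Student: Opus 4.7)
The plan is to establish the seven items in the listed order, exploiting the labelling convention that forces $[\Delta(k,l):L_{i,j}]\neq 0 \Rightarrow k=i,\ j\geq l$, together with axioms \ref{item:A1n} and \ref{item:A2n}. This labelling means that each standard module $\Delta(i,j)$ is a ``single-string'' module, with composition factors drawn only from $\{L_{i,j},L_{i,j+1},\dots,L_{i,l_i}\}$, which will pin down its shape very sharply.

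Item (1) is immediate from \ref{item:A1n}: the excerpt already records that $\Rad\Delta(i)\in\mathcal{F}(\Delta)$ is equivalent to $\mathcal{F}(\Delta)$ being closed under submodules (Dlab--Ringel, Ringel--Iyama). For (2) and (3), I would proceed by reverse induction on $j$. At $j=l_i$, the labelling forces every composition factor of $\Delta(i,l_i)$ to be $L_{i,l_i}$; combined with $[\Delta(i,l_i):L_{i,l_i}]=1$, this gives $\Delta(i,l_i)=L_{i,l_i}$. For the inductive step, $\Rad\Delta(i,j)$ lies in $\mathcal{F}(\Delta)$ by \ref{item:A1n}; its composition factors are among $\{L_{i,j'}:j'>j\}$, so any $\Delta$-filtration can only use $\Delta(i,j')$ with $j'>j$. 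Comparing $[\Delta(i,j):L_{i,j'}]$ with the contributions coming from the filtration and using the inductive description of the $\Delta(i,j')$, the filtration collapses to the single factor $\Delta(i,j+1)$. Uniseriality in (3) then follows by iterating (2), because $\Delta(i,j)$ has simple top $L_{i,j}$ and radical $\Delta(i,j+1)$, which is itself uniserial by induction with socle $L_{i,l_i}$.

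For items (4) and (5) I would use \ref{item:A2n} in an essential way. Since $\Rad\Delta(i,l_i)=0$, \ref{item:A2n} gives $Q_{i,l_i}\in\mathcal{F}(\Delta)$; as $Q_{i,l_i}$ is injective it lies in $\mathcal{F}(\nabla)$ as well, so $Q_{i,l_i}\in\mathcal{F}(\Delta)\cap\mathcal{F}(\nabla)=\Add(T)$. Its socle $L_{i,l_i}$ is simple, hence $Q_{i,l_i}$ is an indecomposable tilting module. To pin it down as $T(i,1)$, I would note that $\Delta(i,1)$ embeds in $Q_{i,l_i}$ (via any composition $\Delta(i,1)\twoheadrightarrow L_{i,l_i}\hookrightarrow Q_{i,l_i}$ made injective by uniseriality), and then apply Lemma \ref{lem:newone}: the characterising embedding $\Delta(i,1)\hookrightarrow T(i,1)$ matches, giving $T(i,1)\cong Q_{i,l_i}$. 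For the remaining $T(i,j)$, I would define $T(i,j+1)$ as the kernel of a left minimal $\mathcal{F}(\nabla)$-approximation $T(i,j)\to\nabla(i,j)$, or equivalently identify $T(i,j+1)$ as the unique submodule of $T(i,j)\subseteq T(i,1)$ whose quotient is $\nabla(i,j)$. The quasi-hereditary machinery (Ext-vanishing between $\Delta$'s and $\nabla$'s, together with the $\Delta$-filtration of $T(i,1)$ inherited from (A2)) ensures that each such kernel is itself a tilting summand, and the uniqueness of tilting summands forces it to be $T(i,j+1)$. This simultaneously yields $T(i,j+1)\subseteq T(i,j)$, the quotients $\nabla(i,j)$, the base case $T(i,l_i)\cong\nabla(i,l_i)$, and the identity $Q_{i,j}\cong T(i,1)/T(i,j+1)$ by realising the latter as the injective hull of $L_{i,j}$ via Ext-vanishing.

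Item (6) is then bookkeeping: by (3) each $\Delta(k,l)$ has a unique composition factor of type $L_{i,l_i}$ (namely $L_{k,l_k}$), and since simple modules of type $L_{i,l_i}$ appear only at the foot of such filtration steps, the count of $\Delta$-factors in any $\Delta$-filtration of $M$ equals $\sum_i[M:L_{i,l_i}]$. For (7), one direction is immediate from (3), which shows $\Soc\Delta(i,j)\cong L_{i,l_i}$. For the converse, if $\Soc M$ is a sum of modules $L_{i,l_i}$ then $M$ embeds into a direct sum of the injective hulls $Q_{i,l_i}\cong T(i,1)$, all of which lie in $\mathcal{F}(\Delta)$; by (1), $\mathcal{F}(\Delta)$ is closed under submodules, so $M\in\mathcal{F}(\Delta)$. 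The main obstacle in the whole argument is the careful book-keeping in (4)--(5) to realise every $T(i,j)$ as a submodule of $T(i,1)=Q_{i,l_i}$ with the correct $\nabla$-quotient; everything else is a relatively direct consequence of the labelling and the two axioms.
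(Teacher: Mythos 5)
The paper itself gives no proof --- Theorem~\ref{thm:newprop} is quoted verbatim from \cite[\S5]{MR3510398} --- so there is no internal argument for me to compare against, and I can only assess your plan on its own merits. The general strategy (downward induction on the second index, the labelling constraint, axioms~\ref{item:A1n}/\ref{item:A2n}, Ext-vanishing for the tilting part) is the natural one, and items (1), (6), (7) are essentially right, modulo the small remark that the forward direction of (7) also needs (1): one applies closure under submodules to $\Soc M\subseteq M$ and then observes that a simple $\Delta$-filtered module must itself be a standard module, hence of the form $L_{i,l_i}$ by (2); it does not follow from (3) alone.

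The real gap is in the inductive step for (2)--(3). You assert that ``comparing $[\Delta(i,j):L_{i,j'}]$ with the contributions coming from the filtration\dots the filtration collapses to the single factor $\Delta(i,j+1)$.'' But the only inputs available at that point are \ref{item:A1n} and the labelling implication $[\Delta(k,l):L_{i,j}]\neq0\Rightarrow k=i,\ j\geq l$, and these give only that $\Rad{\Delta(i,j)}$ is filtered by various $\Delta(i,y)$ with $y>j$; they say nothing about the multiplicities $[\Delta(i,j):L_{i,y}]$, which is exactly what your ``comparison'' would need as input. Nothing you have used rules out, say, $\Rad{\Delta(i,j)}\cong\Delta(i,j+1)\oplus\Delta(i,j+2)$. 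To exclude this one must bring in \ref{item:A2n} --- the membership $Q_{i,l_i}\in\mathcal{F}(\Delta)$ --- already at this stage, not only later for (4); in the source the labelling is in fact constructed so that (2)--(3) hold essentially by design, with the displayed implication appearing as a \emph{consequence} rather than as the full content of the labelling. A secondary gap of the same flavour sits in (4)--(5): identifying $\Ker\bigl(T(i,j)\twoheadrightarrow\nabla(i,j)\bigr)$ with $T(i,j+1)$ needs the $\nabla$-multiplicities of $T(i,j)$ and the simple-socle property coming from the inclusion $T(i,j)\subseteq Q_{i,l_i}$, both of which are part of what (4)--(5) assert; this can be repaired by running one simultaneous downward induction that carries along the chain of inclusions, the $\nabla$-multiplicities and the quotient isomorphisms together, but as written the argument is circular.
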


RUSQ algebras are well behaved with respect to Ringel duality.
\begin{thm}[{\cite[§6]{MR3510398}}]
\label{thm:lastonebynow}
Let $\B{B,\Phi, \sqsubseteq }$ be a RUSQ algebra. Then $\B{\mathcal{R}\B{B},\Phi, \sqsubseteq\op}$ is a LUSQ algebra. The costandard module $\nabla ' \B{i,1}$ is isomorphic to $L'_{i,1}$, and for $j>1$ we have
\[
\nabla ' \B{i,j-1} \cong \nabla ' \B{i,j} / L_{i,j}'.
\]
Each $\nabla ' \B{i,j}$ is uniserial and has composition factors $L_{i,1}', \ldots, L_{i,j}'$, ordered from the top to the socle. Moreover, $P_{i,1}' \cong T'\B{i,l_i}$ and $P_{i,j+1}' \subseteq P_{i,j}'$ for $j < l_i$. This gives rise to a filtration with factors $P_{i,l_i}' \cong \Delta'\B{i,l_i}$ and $P_{i,j}'/P_{i,j+1}'\cong \Delta ' \B{i,j}$ for $j < l_i$.
\end{thm}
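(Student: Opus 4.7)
The plan is to transfer the structure of $B$ to $\mathcal{R}\B{B}$ through the Ringel equivalence $F = \Hom{B}{T}{-}\colon \mathcal{F}\B{\nabla} \to \mathcal{F}\B{\Delta'}$. Recall that $F$ is exact on $\mathcal{F}\B{\nabla}$ and sends $T\B{i,j} \mapsto P'\B{i,j}$, $\nabla\B{i,j} \mapsto \Delta'\B{i,j}$, and (by a standard Ringel duality identity) each injective indecomposable $Q\B{\lambda}$ to the tilting indecomposable $T'\B{\lambda}$ of $\mathcal{R}\B{B}$. Applying $F$ to the chain $T\B{i,j} \supseteq T\B{i,j+1}$ from Theorem~\ref{thm:newprop}(4,5), whose successive quotients are $\nabla\B{i,j}$, gives immediately the descending filtration $P'\B{i,1} \supset P'\B{i,2} \supset \cdots \supset P'\B{i,l_i} = \Delta'\B{i,l_i}$ with $P'\B{i,j}/P'\B{i,j+1} \cong \Delta'\B{i,j}$. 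Combining $Q\B{i,l_i} = T\B{i,1}$ from Theorem~\ref{thm:newprop}(4) with $F\B{Q\B{\lambda}} = T'\B{\lambda}$ delivers $T'\B{i,l_i} = F\B{Q\B{i,l_i}} = F\B{T\B{i,1}} = P'\B{i,1}$.

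For the costandards, BGG reciprocity yields $[\nabla'\B{k,l} : L'\B{a,b}] = (P'\B{a,b} : \Delta'\B{k,l})$, and by the filtration above the right-hand side equals $1$ precisely when $a = k$ and $b \leq l$, and $0$ otherwise. Hence $\nabla'\B{i,j}$ has composition factors $L'\B{i,1}, \ldots, L'\B{i,j}$, each with multiplicity one, with socle $L'\B{i,j}$ by general quasihereditary theory. Since $B$ is right strongly quasihereditary (axiom (A1) of RUSQ), Ringel's duality result for strongly quasihereditary algebras yields that $\mathcal{R}\B{B}$ is left strongly quasihereditary, so $\nabla'\B{i,j}/L'\B{i,j}$ lies in $\mathcal{F}\B{\nabla'}$. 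A multiplicity bookkeeping argument on the possible $\nabla'$-factors of this quotient, using the explicit formula just derived to enforce composition-factor totals, forces a unique $\nabla'$-factor, namely $\nabla'\B{i,j-1}$. An induction on $j$ (trivial base $j=1$), together with the elementary observation that a module with simple socle and uniserial proper quotient is itself uniserial, then gives the announced uniserial description with composition factors $L'\B{i,1}, \ldots, L'\B{i,j}$ ordered top-to-socle.

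To conclude that $\B{\mathcal{R}\B{B}, \Phi, \sqsubseteq\op}$ is LUSQ one checks the dual axioms: (A1$_L$) $\nabla'\B{i,j}/\Soc \nabla'\B{i,j} \in \mathcal{F}\B{\nabla'}$, which is exactly the left strong quasihereditary property established above, and (A2$_L$) $P'\B{i,j} \in \mathcal{F}\B{\nabla'}$ whenever $\nabla'\B{i,j}$ is simple; by the uniserial description this simple case occurs only at $j=1$, and $P'\B{i,1} = T'\B{i,l_i}$ is tilting and so lies in $\mathcal{F}\B{\nabla'}$. The main technical obstacle is the multiplicity count identifying $\nabla'\B{i,j}/L'\B{i,j}$ with the single costandard $\nabla'\B{i,j-1}$ rather than some more complicated extension of smaller costandards — this is where the composition-factor data from BGG must be combined with the left strong QH property delivered by Ringel duality, and it is simultaneously the key input for the uniseriality of $\nabla'\B{i,j}$ and for the verification of (A1$_L$).
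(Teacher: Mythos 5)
Your argument is largely sound, and it is worth noting at the outset that the paper does not prove this theorem: it is imported verbatim from \cite[\S 6]{MR3510398}, so there is no in-text proof for a direct comparison. Evaluating the proposal on its own merits, the three main pieces all work: (i) transporting the filtration $T(i,1)\supset T(i,2)\supset\cdots\supset T(i,l_i)$ of Theorem \ref{thm:newprop}(4,5) through the exact equivalence $F=\Hom{B}{T}{-}\colon \mathcal{F}(\nabla)\to\mathcal{F}(\Delta')$, together with the identification $F(Q(\lambda))\cong T'(\lambda)$, yields the chain $P'_{i,1}=T'(i,l_i)\supset P'_{i,2}\supset\cdots\supset P'_{i,l_i}=\Delta'(i,l_i)$ with factors $\Delta'(i,j)$; (ii) BGG reciprocity then gives the composition factors of $\nabla'(i,j)$ exactly as you claim; and (iii) your observation that a module with simple socle and uniserial cosocle-quotient is uniserial is correct and closes the induction. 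The multiplicity bookkeeping that isolates $\nabla'(i,j-1)$ as the only possible $\nabla'$-filtration of $\nabla'(i,j)/L'_{i,j}$ is also fine, since any $\nabla'(i,b)$ with $b<j-1$ would leave the factors $L'_{i,b+1},\dots,L'_{i,j-1}$ unaccounted for, and adding another costandard would overshoot $L'_{i,1}$.

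The one point that needs a precise citation rather than a nod is the claim ``Ringel's duality result for strongly quasihereditary algebras yields that $\mathcal{R}(B)$ is left strongly quasihereditary.'' This is the load-bearing step: you use it both to see that $\nabla'(i,j)/L'_{i,j}\in\mathcal{F}(\nabla')$ (without which the multiplicity bookkeeping has nothing to grip) and later as your verification of axiom (A1$_L$). The only result from \cite{RingelIyama} that this paper cites (Appendix A.2) concerns global dimension of algebras that are \emph{both} right and left strongly quasihereditary, which is not the statement you need. The fact that Ringel duality exchanges right strong quasiheredity and left strong quasiheredity is true (it follows, for instance, from the characterisation of right strong quasiheredity by $\Proj{\Delta(\lambda)}\le 1$ together with the exchange of homological dimensions of $\Delta$- and $\nabla'$-modules under Ringel duality), but it is not derivable from the equivalence $\mathcal{F}_B(\nabla)\simeq\mathcal{F}_{\mathcal{R}(B)}(\Delta')$ alone, since closure of $\mathcal{F}_B(\Delta)$ under submodules does not transport naively through that equivalence. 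You should either name the exact statement in \cite{RingelIyama} (or \cite{MR3510398}) that you are invoking, or supply the short argument via projective/injective dimensions of the (co)standard modules; without one of these, the reader cannot check that the argument is not circular, given that LSQH-ness of $\mathcal{R}(B)$ is part of the very conclusion being proved.
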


\section{Theorem \ref{customthm:A}}
\label{sec:theorema}
The goal of this section is to prove Theorem \ref{customthm:A} stated in the Introduction. In order to attain this, we investigate in detail the $\Delta$-filtrations of the tilting modules over the ADR algebra $R_A$. 
\subsection{Motivation for Theorem \ref{customthm:A}}
\label{subsec:motivation}
Given an Artin algebra $A$, define
\[
C=C_A:=\bigoplus_{i=1}^n\bigoplus_{j=1}^{\LL{Q_i}} \SOC{j}{Q_i}.
\]
This is a cogenerator of $\Mod{A}$. Set $S_A:= \End{A}{C}\op$. It turns out that the algebras $S_A$ and $\mathcal{R}\B{R_A}$ have a very similar structure. In fact, the statement of Theorem \ref{customthm:A} can be loosely rephrased as: the algebras $S_A$ and $\mathcal{R}\B{R_A}$ are isomorphic provided that $A$ is ``nice enough". Before delving into the technical results necessary to prove Theorem \ref{customthm:A}, we will try to illustrate (informally) why the algebras $S_A$ and $\mathcal{R}\B{R_A}$ should be related.

In this setting, $\mathcal{R}\B{R_A}$ is equal to $\End{R_A}{T}\op$, where $T=\bigoplus_{i=1}^n\bigoplus_{j=1}^{l_i}T(i,j)$. This algebra is quasihereditary with respect to $\B{\Lambda, \unlhd \op}$, where $\B{\Lambda, \unlhd}$ is the poset associated with the ADR algebra described in \eqref{eq:posetadr} and \eqref{eq:guentanaborrabo}. According to Theorem~\ref{thm:lastonebynow}, $\B{\mathcal{R}\B{R_A},\Lambda, \unlhd \op}$ is a LUSQ algebra.

Turning the attention to the algebra $S_A$, we have that
\begin{align*}
S_A & =\End{A}{\bigoplus_{i=1}^n\bigoplus_{j=1}^{\LL{Q_i}} \SOC{j}{Q_i}} \op \\
& =\End{A}{D\B{\bigoplus_{i=1}^n\bigoplus_{j=1}^{\LL{Q_i}}P_i^{A\op}/ \RAD{j}{P_i^{A\op}}}} \op \\
& \cong \End{A\op}{G_{A\op}} =(R_{A\op})\op,
\end{align*}
where $D$ is the standard duality and $P^{A\op}_i$ denotes the projective indecomposable $A\op$-module $D\B{Q_i}$. To avoid ambiguity, denote the poset corresponding to the ADR algebra $R_{A\op}$ of $A\op$ by $\B{\Lambda_{A\op}, \trianglelefteq_{A\op}}$ and represent its elements by $[i,j]$. Note that $S_A$ is quasihereditary, as $R_{A\op}$ is. To be precise, $\B{S_A,\Lambda_{A\op}, \trianglelefteq_{A\op}}$ is a LUSQ algebra since $\B{R_{A\op},\Lambda_{A\op}, \trianglelefteq_{A\op}}$ is RUSQ.

So both $\B{\mathcal{R}\B{R_A},\Lambda, \unlhd \op}$ and $\B{S_A,\Lambda_{A\op}, \trianglelefteq_{A\op}}$ are LUSQ algebras. We take this analogy further by comparing the posets
\begin{align*}
\B{\Lambda, \trianglelefteq \op},\,\, & \Lambda= \{ (i,j):\, 1 \leq i \leq n, \, 1 \leq j \leq l_i=\LL{P_i} \}, \\
\B{\Lambda_{A\op}, \trianglelefteq_{A\op}},\,\, & \Lambda_{A\op}= \{ [i,j]:\, 1 \leq i \leq n, \, 1 \leq j \leq \LL{P^{A\op}_i}=\LL{Q_i} \}. 
\end{align*}
For the algebras $\mathcal{R}\B{R_A}$ and $S_A$ to be isomorphic they must have the same number of simple modules, i.e.~the sets $\Lambda$ and $\Lambda_{A\op}$ must have the same cardinality. It seems then reasonable to require that $\LL{P_i}=\LL{Q_i}$, for all $1 \leq i \leq n$. 

Ideally, an isomorphism between $\mathcal{R}\B{R_A}$ and $S_A$ would somehow preserve the orders $\trianglelefteq \op$ and $\trianglelefteq_{A\op}$ of $\Lambda$ and $\Lambda_{A\op}$, respectively. As $\mathcal{R}\B{R_A}$ and $S_A$ are LUSQ algebras, they both have uniserial costandard modules. By Theorem~\ref{thm:lastonebynow}, the costandard $\mathcal{R}\B{R_A}$-module with label $(i,j)$, $\nabla ' (i,j)$, has the following structure
\[
\begin{tikzcd}[ampersand replacement=\&, row sep =tiny]
(i,1) \arrow[dash]{d}\\
(i,2) \arrow[dash]{d}\\
\vdots \arrow[dash]{d} \\
(i,j)
\end{tikzcd}.
\]
The costandard $S_A$-module with label $[i,j]$, $\nabla^{S_A}[i,j]$, is isomorphic to the module $D(\Delta^{R_{A\op}}[i,j])$, where $\Delta^{R_{A\op}}[i,j]$ is the standard $R_{A\op}$-module with label $[i,j]$. Therefore, the submodule lattice of $\nabla^{S_A}[i,j]$ is `dual' to the submodule lattice of $\Delta^{R_{A\op}}[i,j]$. Using part 3 of Theorem \ref{thm:newprop}, we deduce that $\nabla^{S_A}[i,j]$ has the following structure
\[
\begin{tikzcd}[ampersand replacement=\&, row sep =tiny]
\left[  i,\LL{Q_i}\right]  \arrow[dash]{d} \\
\left[ i,\LL{Q_i} -1\right]  \arrow[dash]{d}\\
\vdots \arrow[dash]{d} \\
\left[ i,j\right] 
\end{tikzcd}.
\]
If we suppose that $\LL{P_i}=\LL{Q_i}=l_i$ for all $i$, then the modules $\nabla' (i,j)$ and $\nabla^{S_A}[i,l_i-j+1]$ have the same length for every $1 \leq i\leq n$, $1 \leq j \leq l_i$. The stronger assumption that $\LL{P_i}=\LL{Q_i}=L$ for all $i$, actually implies that the bijection $(i,j) \longmapsto [i,L-j+1]$ preserves the partial orders. In this case, we have
\begin{align*}
(i,j)\lhd\op(k,l) &\Leftrightarrow (i,j)\rhd(k,l)  \\
&\Leftrightarrow j < l  \\
&\Leftrightarrow L-j+ 1 > L-l+1  \Leftrightarrow [i,L-j+ 1]\lhd_{A\op}[k,L-l+1] .
\end{align*}

These observations support the assumptions and the claim of Theorem \ref{customthm:isoBlaststrong}.
\begin{customthm}{A}
\label{customthm:isoBlaststrong}
Suppose that $A$ satisfies $\LL{P_i}=\LL{Q_i}=L$ for all $i$, $1 \leq 1 \leq n$. Moreover, suppose that all projectives $P_i$ and all injectives $Q_i$ are rigid. Then
\[\mathcal{R}\B{R_A} \cong S_A \cong (R_{A\op})\op.\]
\end{customthm}

Recall that a module is \emph{rigid} if its radical series coincides with its socle series. The assumptions in the statement of Theorem \ref{customthm:isoBlaststrong} will be further discussed in Section~\ref{sec:furtherremarks}.
 
\subsection{Towards the proof of Theorem \ref{customthm:isoBlaststrong}}
\label{subsec:preliminaryresults}
Roughly speaking, the quasihereditary structure of the algebra $S_A$ depends on the socle series of the injective indecomposable $A$-modules, whereas the structure of the algebra $\mathcal{R}\B{R_A}$ depends on the filtrations
\[
0 \subset T\B{i,l_i} \subset \cdots \subset T\B{i,j} \subset \cdots \subset T\B{i,1}=Q_{i,l_i}
\]
mentioned in Theorem \ref{thm:newprop}. The results in this subsection explore the connections between these two filtrations. Furthermore, we determine the $\Delta$-filtration of the tilting modules $T(i,j)$ completely, when $A$ satisfies the relevant conditions for Theorem \ref{customthm:isoBlaststrong}.
 
\subsubsection{$\Delta$-semisimple filtrations for the ADR algebra}
\label{subsubsec:manuela}
Recall that the ADR algebra is a RUSQ algebra. Our proof of Theorem \ref{customthm:isoBlaststrong} uses the special properties of $\Delta$-filtrations of modules over RUSQ algebras.

We recall the definition of trace of a module. The \emph{trace} of $\Theta$ in a $B$-module $M$ is given by $\Tra{\Theta}{M}:=\sum_{f: \, f\in \Hom{B}{U}{M},\, U \in \Theta} \Ima{f}$. This is the largest submodule of $M$ generated by $\Theta$. 

A module $M$ is said to be \emph{$\Delta$-semisimple} if it isomorphic to a direct sum of standard modules. When the underlying algebra is a RUSQ algebra, the $\Delta$-semisimple modules are particularly well behaved.
\begin{prop}[{\cite[Corollary $3.3$, Proposition $3.8$]{arxiv}}]
\label{prop:dss1}
Let $\B{B,\Phi, \sqsubseteq}$ be a RUSQ algebra. The following hold:
\begin{enumerate}
\item every submodule of a $\Delta$-semisimple $B$-module is still $\Delta$-semisimple;
\item every module $M \in \mathcal{F}\B{\Delta}$ has a unique $\Delta$-semisimple submodule which is maximal among the class of all $\Delta$-semisimple submodules of $M$;
\item the largest $\Delta$-semisimple submodule of $M \in \mathcal{F}\B{\Delta}$ is given by $\delta\B{M}:=\Tra{\Delta}{M}$, and moreover $M/\delta\B{M}$ lies in $\mathcal{F}\B{\Delta}$.
\end{enumerate}
\end{prop}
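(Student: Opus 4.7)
The plan is to lean on the combinatorial structure of standard modules in a RUSQ algebra as recorded in Theorem \ref{thm:newprop}: each $\Delta(i,j)$ is uniserial with composition factors $L_{i,j}, L_{i,j+1}, \ldots, L_{i,l_i}$ from top to socle, $\mathcal{F}(\Delta)$ is closed under submodules, and a module lies in $\mathcal{F}(\Delta)$ precisely when its socle is a direct sum of $L_{i,l_i}$-type simples. These are essentially the only structural inputs I would need.

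For part (1), I would let $N = \bigoplus_{k=1}^{r} \Delta(i_k, j_k)$ and take $M \subseteq N$. Since $\Soc{M} \subseteq \Soc{N}$ is a direct sum of $L_{i,l_i}$-type simples, Theorem \ref{thm:newprop}(7) already gives $M \in \mathcal{F}(\Delta)$. To upgrade this to $\Delta$-semisimplicity I would induct on $r$. The base case $r = 1$ is immediate from the uniseriality of $\Delta(i_1, j_1)$, whose submodule lattice is the chain $0 \subsetneq \Delta(i_1, l_{i_1}) \subsetneq \cdots \subsetneq \Delta(i_1, j_1)$, every term a standard. For $r > 1$ I would decompose $N = N' \oplus \Delta(i, j)$, let $\pi$ denote the projection onto the second summand, and form the short exact sequence
\[
0 \to M \cap N' \to M \xrightarrow{\pi} \pi(M) \to 0,
\]
in which $M \cap N'$ is $\Delta$-semisimple by induction and $\pi(M) = \Delta(i, m)$ is standard by the $r = 1$ case. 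A splitting of this sequence produces the desired decomposition $M \cong (M \cap N') \oplus \Delta(i, m)$. To construct the section, I would pick a generator $v$ of $\Delta(i, m) \subseteq \Delta(i, j)$ and, by adjusting by an element of $M \cap N'$, find a lift $\tilde{v} \in M$ with $\operatorname{ann}_B(\tilde{v}) = \operatorname{ann}_B(v)$; then $B\tilde{v} \subseteq M$ maps isomorphically to $\Delta(i, m)$ under $\pi$, and $M = (M \cap N') \oplus B\tilde{v}$.

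For parts (2) and (3), set $\delta(M) := \Tra{\Delta}{M}$. Every $\Delta$-semisimple submodule of $M$ is by construction contained in $\delta(M)$, so it suffices to show that $\delta(M)$ itself is $\Delta$-semisimple and that $M/\delta(M) \in \mathcal{F}(\Delta)$. Noetherianity yields finitely many standard submodules $U_1, \ldots, U_s \subseteq M$ with $\delta(M) = U_1 + \cdots + U_s$. Taking such a presentation with $s$ minimal, I would inductively eliminate any ``redundant'' summand by appealing to part (1) applied inside $\bigoplus_{\alpha} U_\alpha$, concluding that the sum is in fact direct, so $\delta(M)$ is $\Delta$-semisimple. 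Finally, $M/\delta(M) \in \mathcal{F}(\Delta)$ would follow from Theorem \ref{thm:newprop}(7) once I verify that $\Soc{M/\delta(M)}$ has only $L_{i,l_i}$-type composition factors: any simple submodule $L$ of $M/\delta(M)$ lifts to some $x \in M$ with $Bx \not\subseteq \delta(M)$ and $\Rad{B} \cdot Bx \subseteq \delta(M)$, forcing $L \cong L_{i,l_i}$ via a composition-factor analysis of $Bx$ combined with the socle characterisation of $\mathcal{F}(\Delta)$.

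The main obstacle is the splitting in part (1): the relevant $\operatorname{Ext}^1$ need not vanish a priori, so the splitting must exploit the very specific embedding $M \subseteq N' \oplus \Delta(i, j)$ rather than a generic Ext-vanishing argument. The key technical step is the existence of a lift $\tilde{v}$ with the correct annihilator, which uses the uniseriality of $\Delta(i, j)$ and the $\Delta$-semisimple structure of $N'$ in an essential way.
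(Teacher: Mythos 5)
The paper does not prove this proposition itself — it cites it from an external reference — so I can only assess your proposal on its own merits, and there I see a genuine gap at the point you yourself flag as ``the key technical step''.

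In part (1) everything hangs on splitting the sequence $0 \to M \cap N' \to M \to \pi(M) \to 0$. Your plan is to pick a generator $v$ of $\pi(M) = \Delta(i,m)$, take any lift $w = w' + v \in M$, and then find $u \in M \cap N'$ so that $\tilde v := w - u$ satisfies $\operatorname{ann}_B(\tilde v) = \operatorname{ann}_B(v)$; then $B\tilde v$ would map isomorphically onto $\Delta(i,m)$ and provide the splitting. But the existence of such a $u$ is precisely the content to be proved. One always has $\operatorname{ann}(w) \subseteq \operatorname{ann}(v)$, and the requirement $\operatorname{ann}(v) \cdot (w - u) = 0$ means $bu = bw$ for all $b \in \operatorname{ann}(v)$, i.e.\ one must extend the $B$-module map $\operatorname{ann}(v) \to M \cap N'$, $b \mapsto bw$, along the inclusion $\operatorname{ann}(v) \hookrightarrow B$. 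Since $B/\operatorname{ann}(v) \cong \Delta(i,m)$, the obstruction lives in $\operatorname{Ext}^1_B\bigl(\Delta(i,m), M\cap N'\bigr)$, which — as you note — is not zero in general when some factor $\Delta(c,d)$ of the $\Delta$-semisimple module $M \cap N'$ satisfies $(i,m) \lhd (c,d)$. You assert that uniseriality of $\Delta(i,j)$ and the structure of $N'$ kill this obstruction, but no argument is offered, and it is not at all clear how to produce one: the map $\theta \colon \Delta(i,m) \to N'/(M \cap N')$ classifying the extension can land in a quotient that is itself not $\Delta$-filtered, so the usual $\operatorname{Ext}$-vanishing between standard modules does not apply directly. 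Without filling this in, the induction in part (1) does not close, and since parts (2)--(3) lean on part (1), the whole argument is incomplete.

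A secondary issue: in part (3) the passage from $\delta(M) = U_1 + \cdots + U_s$ with $s$ minimal to the sum being direct is asserted by ``appealing to part (1) applied inside $\bigoplus_\alpha U_\alpha$'', but it is not explained how a nontrivial intersection $U_s \cap \sum_{t<s} U_t$ (which is a standard submodule $\Delta(a,b')$ of $U_s = \Delta(a,b)$) lets you drop or replace a summand: the quotient $U_s / (U_s \cap \sum_{t<s}U_t)$ need not lie in $\mathcal{F}(\Delta)$, so the minimality of $s$ is not obviously violated. A cleaner route to part (1), which avoids the splitting altogether, is to observe that in a direct sum of standard modules every composition factor of type $L_{i,l_i}$ lies in the socle (each $\Delta(a,b)$ contributes exactly one such factor, namely its socle), hence the same is true of any submodule $M$, so $\operatorname{r}(M)$ equals the length of $\Soc M$; one then argues that a module in $\mathcal{F}(\Delta)$ with this property must be $\Delta$-semisimple. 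You may find it easier to make that count rigorous than to construct the section directly.
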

Write $\delta:=\Tra{\Delta}{-}$ and let $\delta_0$ be the zero functor in $\Mod{B}$. For $i \geq 1$ and $M$ in $\Mod{B}$, define $\delta_{i+1}\B{M}$ as the module satisfying the identity $\delta_{i+1}\B{M}/\delta_{i}\B{M}= \delta \B{M/ \delta_{i}\B{M}}$. Note that $\delta_1=\delta$.

Let $\B{B,\Phi, \sqsubseteq}$ be a RUSQ algebra. Using Proposition \ref{prop:dss1}, we deduce that every module $M$ in $\mathcal{F}\B{\Delta}$ has a special filtration with proper inclusions
\[
0 \subset \delta\B{M} \subset \cdots \subset \delta_m \B{M}=M
\]
whose factors are $\Delta$-semisimple modules. This is the \emph{$\Delta$-semisimple filtration} of $M$. The integer $m$ is called the \emph{$\Delta$-semisimple length} of $M$. We write $\dssl{M}=m$.

The operators $\delta_i$ can be regarded as subfunctors of the identity functor $1_{\Mod{B}}$. In fact, the functors $\delta_i$ are left exact subfunctors of $1_{\Mod{B}}$, or in other words, they satisfy $\delta_i(N)=N \cap \delta_i(M)$ for every $N$ and $M$ with $N\subseteq M$ (we refer to \cite[§$3.2$]{arxiv} for further details).
\begin{prop}[{\cite[Lemmas $3.5$, $3.11$, $3.12$]{arxiv}}]
\label{prop:arxiv}
Let $\B{B,\Phi, \sqsubseteq}$ be a RUSQ algebra. Then the functors $\delta_i$ satisfy $\delta_i(N)=N \cap \delta_i(M)$ for every $N$ and $M$ with $N\subseteq M$. In particular, $\delta_{i}\circ \delta_{j}=\delta_{\min\{i,j\}}$. Moreover, the following hold for $M$ in $\mathcal{F}(\Delta)$:
\begin{enumerate}
\item if $i\leq \dssl{M}$, then $\dssl{\B{M/\delta_i\B{M}}}=\dssl{M}-i$;
\item if $N$ is a submodule of $M$, then $\dssl{N} \leq \dssl{M}$;
\item if $i\leq \dssl{M}$ then $\delta_i\B{M}$ is the largest $\Delta$-filtered submodule $N$ of $M$ such that $\dssl{N}=i$.
\end{enumerate}
\end{prop}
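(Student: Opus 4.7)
The plan is to reduce everything to the base case $i=1$ of the left-exactness identity $\delta_i(N) = N \cap \delta_i(M)$, and then derive the remaining assertions by induction, quotient arguments, and direct inspection. For $i=1$, the inclusion $\delta(N) \subseteq N \cap \delta(M)$ is automatic from functoriality of the trace $\Tra{\Delta}{-}$. The reverse containment is where the RUSQ hypothesis enters: the submodule $N \cap \delta(M)$ of the $\Delta$-semisimple module $\delta(M)$ is again $\Delta$-semisimple by Proposition \ref{prop:dss1}(1), and as a $\Delta$-semisimple submodule of $N$ it must lie inside the maximal such submodule, namely $\delta(N)$.

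For general $i$, I would induct on $i$. Granted $\delta_i(N) = N \cap \delta_i(M)$, the inclusion $N \hookrightarrow M$ induces a well-defined inclusion $N/\delta_i(N) \hookrightarrow M/\delta_i(M)$. Applying the base case to this embedding (inside the ambient $M/\delta_i(M)$) gives $\delta(N/\delta_i(N)) = (N/\delta_i(N)) \cap \delta(M/\delta_i(M))$, and pulling back along the projection $N \to N/\delta_i(N)$ yields $\delta_{i+1}(N) = N \cap \delta_{i+1}(M)$, completing the induction. The identity $\delta_i \circ \delta_j = \delta_{\min\{i,j\}}$ then drops out: setting $N = \delta_j(M)$ in the main identity gives $\delta_i(\delta_j(M)) = \delta_j(M) \cap \delta_i(M)$, and the chain $\delta_1(M) \subseteq \delta_2(M) \subseteq \cdots$ reduces this intersection to $\delta_{\min\{i,j\}}(M)$.

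The three numbered items then follow with little extra work. For (1), the same preimage manipulation shows $\delta_j(M/\delta_i(M)) = \delta_{i+j}(M)/\delta_i(M)$ for $j \leq \dssl{M} - i$, so the $\Delta$-semisimple filtration of $M/\delta_i(M)$ is obtained by truncating the bottom $i$ layers of that of $M$, and has length $\dssl{M} - i$. For (2), if $\dssl{M} = m$ then $\delta_m(N) = N \cap \delta_m(M) = N \cap M = N$, forcing $\dssl{N} \leq m$. For (3), combining $\delta_i(\delta_i(M)) = \delta_i(M)$ with the strict inclusion $\delta_{i-1}(\delta_i(M)) = \delta_{i-1}(M) \subsetneq \delta_i(M)$ gives $\dssl{\delta_i(M)} = i$; and for any $\Delta$-filtered submodule $N \subseteq M$ with $\dssl{N} = i$, the computation $N = \delta_i(N) = N \cap \delta_i(M) \subseteq \delta_i(M)$ delivers the required maximality.

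The main obstacle, and the only step that is not essentially formal, is the base case of the left-exactness identity. The trace functor $\Tra{\Delta}{-}$ is not left exact on $\Mod{B}$ in general; what rescues the argument here is the distinctively RUSQ feature that $\Delta$-semisimple modules are closed under taking submodules (Proposition \ref{prop:dss1}(1)), which itself depends on $\mathcal{F}(\Delta)$ being closed under submodules in the RUSQ setting. Once the base case is established, the propagation to arbitrary $i$ and the derivation of the three consequences are routine bookkeeping with the inductive definition of $\delta_i$ and the filtrations it produces.
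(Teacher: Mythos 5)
Your overall architecture is the right one: establish the base case $i=1$ of the left-exactness identity, propagate to all $i$ by induction on the $\delta$-filtration, and then read off the three numbered items by routine manipulations. The induction step and the derivations of (1)--(3) are all correct. The problem is the base case, and specifically the clause ``the submodule $N \cap \delta(M)$ of the \emph{$\Delta$-semisimple} module $\delta(M)$.'' Proposition \ref{prop:dss1}(3) tells you $\delta(M)=\Tra{\Delta}{M}$ is the largest $\Delta$-semisimple submodule only under the hypothesis $M \in \mathcal{F}(\Delta)$, whereas the identity you are proving is asserted for arbitrary $N \subseteq M$. For a general $B$-module the trace need not be $\Delta$-semisimple at all: if $j < l_i$, the surjection $\Delta(i,j) \twoheadrightarrow L_{i,j}$ shows $\delta(L_{i,j}) = L_{i,j}$, and $L_{i,j}$ is not a direct sum of standard modules (its socle is not of the form $L_{k,l_k}$; see Theorem \ref{thm:newprop}(7)). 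So invoking Proposition \ref{prop:dss1}(1) on $\delta(M)$ is not licensed, and your argument as written only proves the identity for $M \in \mathcal{F}(\Delta)$ (and hence, by closure of $\mathcal{F}(\Delta)$ under submodules, for $N \in \mathcal{F}(\Delta)$ too).

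The good news is that the gap is repairable without changing your strategy. The correct intermediate fact is not that $\delta(M)$ is $\Delta$-semisimple, but that the class $\operatorname{Gen}(\Delta)$ of modules generated by $\Delta$ (equivalently, modules $X$ with $\delta(X)=X$) is closed under submodules. This is what really follows from Proposition \ref{prop:dss1}(1): if $X$ is a quotient of a $\Delta$-semisimple module $P$ and $Y \subseteq X$, then the preimage of $Y$ in $P$ is a submodule of a $\Delta$-semisimple module, hence $\Delta$-semisimple, and it surjects onto $Y$, so $Y \in \operatorname{Gen}(\Delta)$. Now for $N \subseteq M$ arbitrary, $N \cap \delta(M)$ is a submodule of $\delta(M) \in \operatorname{Gen}(\Delta)$, hence lies in $\operatorname{Gen}(\Delta)$, hence equals $\delta(N \cap \delta(M))$, which by functoriality of $\delta$ is contained in $\delta(N)$. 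With this replacement the base case holds in the generality the proposition asserts, and your inductive step and the consequences (1)--(3) go through exactly as you wrote them.
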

When the underlying RUSQ algebra is the ADR algebra, there is further information about the $\Delta$-semisimple filtrations.

\begin{thm}[{\cite[Lemma $4.3$, Theorem $4.4$]{arxiv}}]
\label{thm:socdelta}
Let $M$ be in $\Mod{A}$. Then the $R_A$-module $N:=\Hom{A}{G}{M}$ lies in $\mathcal{F}\B{\Delta}$ and the socle series of $M$ determines the $\Delta$-semisimple filtration of $N$. More precisely,
\[
\delta_i \B{N} = \Hom{A}{G}{\SOC{i}{M}},
\]
for all $i$, and $\dssl{N}=\LL{M}$. Moreover, if $\SOC{i}{M}/\SOC{i-1}{M}=\bigoplus_{\theta \in \Theta} L_{x_{\theta}}$, then
\[
\delta_{i}\B{N}/\delta_{i-1}\B{N}= \bigoplus_{\theta \in \Theta} \Delta\B{x_{\theta}, i}.
\]
\end{thm}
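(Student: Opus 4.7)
The strategy is induction on $\LL{M}$, using the full faithfulness of $F:=\Tom{A}{G}{-}$ together with the uniserial description of the standard modules from Theorem \ref{thm:newprop}(3).

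\emph{Step 1: composition factor formula.} The identity $\Tom{A}{P_k/\RAD{j}{P_k}}{X}=\Tom{A}{P_k}{\SOC{j}{X}}$---valid because $\SOC{j}{X}=\{x\in X:\RAD{j}{A}\cdot x=0\}$---together with full faithfulness of $F$, yields
\[
[F(X):L_{k,j}]\cdot \dim \END{A}{L_{k,j}}=[\SOC{j}{X}:L_k]\cdot \dim \END{A}{L_k}
\]
for every $X\in\Mod{A}$ and $(k,j)\in \Lambda$. Specialising to $X=L_i$, the composition factors of $F(L_i)$ are $L_{i,1},\ldots,L_{i,l_i}$ each of multiplicity one, matching the uniserial $\Delta(i,1)$; hence $F(L_i)\cong \Delta(i,1)$.

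\emph{Step 2: induction on Loewy length.} The base case $\LL{M}=1$ is Step 1 applied summandwise. For $L:=\LL{M}>1$, left exactness of $F$ applied to $0\to \SOC{L-1}{M}\to M\to M/\SOC{L-1}{M}\to 0$ produces an embedding
\[
F(M)/F(\SOC{L-1}{M})\hookrightarrow F(M/\SOC{L-1}{M})\cong \bigoplus_\theta \Delta(x_\theta,1),
\]
where $M/\SOC{L-1}{M}=\bigoplus_\theta L_{x_\theta}$ and the identification uses Step 1. Lifting a generator of each $L_{x_\theta}$ to an element of $M$ outside $\SOC{L-1}{M}$ produces a cyclic submodule of Loewy length $\geq L$ that is a quotient of $P_{x_\theta}$, forcing $l_{x_\theta}\geq L$, so $\Delta(x_\theta,L)\subseteq \Delta(x_\theta,1)$ makes sense as the length-$(l_{x_\theta}-L+1)$ submodule of the uniserial target. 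The composition-factor formula applied to $F(M)$ and $F(\SOC{L-1}{M})$ shows that $F(M)/F(\SOC{L-1}{M})$ has precisely the composition factors of $\bigoplus_\theta \Delta(x_\theta,L)$; uniseriality (Theorem \ref{thm:newprop}(3)) then identifies it with this submodule. Combined with the inductive hypothesis applied to $\SOC{L-1}{M}$, this yields the filtration
\[
0\subseteq F(\SOC{1}{M})\subseteq \cdots \subseteq F(\SOC{L}{M})=F(M)
\]
with $\Delta$-semisimple quotients of the prescribed shape, establishing $F(M)\in\mathcal{F}(\Delta)$.

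\emph{Step 3: identification with the $\delta$-filtration.} By the inductive hypothesis, $\dssl{F(\SOC{i}{M})}\leq i$, so Proposition \ref{prop:arxiv}(3) gives $F(\SOC{i}{M})\subseteq \delta_i(F(M))$. For the reverse inclusion, induct on $i$: assuming $\delta_{i-1}(F(M))=F(\SOC{i-1}{M})$, one has $\delta_i(F(M))/\delta_{i-1}(F(M))=\delta(F(M)/F(\SOC{i-1}{M}))$. Applying the outer inductive hypothesis to $M/\SOC{i-1}{M}$, the bottom $\delta$-layer of $F(M/\SOC{i-1}{M})$ is $\bigoplus_\theta \Delta(x_\theta^{(i)},1)$ (where $\SOC{i}{M}/\SOC{i-1}{M}=\bigoplus_\theta L_{x_\theta^{(i)}}$); left exactness of $\delta$ applied to the embedding $F(M)/F(\SOC{i-1}{M})\hookrightarrow F(M/\SOC{i-1}{M})$ gives
\[
\delta(F(M)/F(\SOC{i-1}{M}))=\bigl(F(M)/F(\SOC{i-1}{M})\bigr)\cap \bigoplus_\theta \Delta(x_\theta^{(i)},1).
\]
By Proposition \ref{prop:dss1}(1) this intersection is $\Delta$-semisimple, and by uniseriality of each $\Delta(x_\theta^{(i)},1)$ it is a direct sum $\bigoplus_\theta \Delta(x_\theta^{(i)},r_\theta)$ for some $r_\theta\geq 1$. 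It contains $\bigoplus_\theta \Delta(x_\theta^{(i)},i)$ by Step 2; any strict containment would produce a composition factor $L_{x_\theta^{(i)},j}$ with $j<i$, but the composition-factor formula shows $F(M)/F(\SOC{i-1}{M})$ has only composition factors with second index $\geq i$. Hence equality holds, and $\delta_i(F(M))=F(\SOC{i}{M})$.

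\emph{Main obstacle.} The chief difficulty is that $F=\Tom{A}{G}{-}$ is only left exact: the groups $\EXT{A}{1}{P_i/\RAD{j}{P_i}}{L_k}\cong \Tom{A}{\RAD{j}{P_i}}{L_k}$ are generically nonzero, so the socle filtration of $M$ does not directly yield a short-exact-sequence filtration of $F(M)$. The workaround exploits the uniseriality of the standards $\Delta(x,1)$ (Theorem \ref{thm:newprop}(3)): Step 2 pins down $F(M)/F(\SOC{L-1}{M})$ by a composition-factor comparison, and Step 3 identifies the successive $\delta$-layers with the socle layers by intersecting with the $\delta$-filtration of the ambient module $F(M/\SOC{i-1}{M})$.
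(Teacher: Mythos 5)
The paper does not actually prove this statement here---it cites it from the reference labelled \cite{arxiv} (the companion paper ``$\Delta$-filtrations and projective resolutions for the Auslander--Dlab--Ringel algebra''), so there is no in-text proof to compare against. Judged on its own terms, your argument captures the right overall strategy (induction on $\LL{M}$, composition-factor bookkeeping from full faithfulness of $\Hom_A(G,-)$, exploitation of the uniserial structure of the $\Delta(i,j)$'s), and Steps~1 and~2 are essentially sound.

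However, Step~3 has a genuine circularity at the base case $i=1$ of the inner induction. You start from $\delta_0(F(M))=0=F(\SOC{0}{M})$ and then ``apply the outer inductive hypothesis to $M/\SOC{i-1}{M}$'' to identify the bottom $\delta$-layer of $F(M/\SOC{i-1}{M})$. For $i\geq 2$ this is fine because $\LL{M/\SOC{i-1}{M}}=L-i+1<L$; but for $i=1$ the module $M/\SOC{0}{M}$ is $M$ itself, whose Loewy length is exactly $L$, so the outer inductive hypothesis does not apply---you would be invoking precisely the statement $\delta_1(F(M))=F(\Soc M)$ that you are trying to prove. Since the entire inner induction rests on this base case, the identification $\delta_i(F(M))=F(\SOC{i}{M})$ is not established. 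Note also that Step~2 on its own only produces \emph{some} filtration of $F(M)$ with $\Delta$-semisimple quotients whose smallest term is $F(\Soc M)$; it does not show this term is the \emph{maximal} $\Delta$-semisimple submodule, which is what $\delta_1$ is.

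The gap is genuine but fixable: one needs a direct argument that $\operatorname{Tr}(\Delta, F(M))\subseteq \Hom_A(G,\Soc M)$. One way is to use the explicit description of the standard $R_A$-modules as $\Delta(k,j)\cong \Hom_A(G,P_k/\RAD{j}{P_k})/\Hom_A(G,\RAD{}{P_k}/\RAD{j}{P_k})$ (this is the content of Propositions~$3.1$ and~$3.4$ of \cite{MR3510398}, and refines the identity $\Delta(i,1)=\Hom_A(G,L_i)$ that the paper does quote). Given any map $g:\Delta(k,j)\to F(M)$, lift to $P_{k,j}\to F(M)$; by full faithfulness this is $\Hom_A(G,f)$ for some $f:P_k/\RAD{j}{P_k}\to M$, and factoring through $\Delta(k,j)$ forces $f$ to kill $\RAD{}{P_k}/\RAD{j}{P_k}$ (since $G$ generates), hence to factor through $L_k$; therefore $\Ima{f}\subseteq \Soc M$ and $\Ima{g}\subseteq \Hom_A(G,\Soc M)$. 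Without some argument of this kind the induction does not get off the ground.
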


Quotients of socle series and quotients of $\Delta$-semisimple filtrations will occur frequently, and to reduce necessary symbols, we will use the following notation: for any module $M$, any module $N$ in $\mathcal{F}\B{\Delta}$ and $i \geq 1$ we write
\[\USOC{i}{M}:= \SOC{i}{M}/ \SOC{i-1}{M} \quad\text{and}\quad \underline{\delta}_{i}\B{N}:= \delta_{i}\B{N}/\delta_{i-1}\B{N}.\]
\subsubsection{Preliminary results}
We now investigate the structure of the $R_A$-modules for algebras $A$ such that $\LL{P_i} =L$ for all $i$. Then, the minimal elements in the poset $(\Lambda, \unlhd)$ are precisely all $(k,L)$ for $1 \leq k \leq n$. Soon we will also study the situation when all $P_i$ are rigid, that is, when their radical series and their socle series coincide.

\begin{lem}
\label{lem:pink}
Let $A$ be such that $\LL{P_i}=L$ for all $i$, $1 \leq i \leq n$. Then
\[T\B{k,L}=L_{k,L}=\Delta\B{k,L},\]
for $1 \leq k \leq n$.
\end{lem}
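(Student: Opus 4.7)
The statement should be almost immediate from the setup already established. My plan is to combine the description of the poset $(\Lambda, \unlhd)$ with the general Lemma \ref{lem:newone} characterising the indecomposable tilting modules.

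First I would identify the minimal elements of $(\Lambda, \unlhd)$. Recall that $(i,j) \lhd (k,l) \Longleftrightarrow j > l$, so an element $(k,l)$ is minimal precisely when $l$ is maximal in its ``column'', i.e.\ $l = l_k$. Under the hypothesis $\LL{P_i} = L$ for all $i$, we have $l_i = L$ for every $i$, so each $(k, L)$ is a minimal element of $(\Lambda, \unlhd)$.

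Next I would invoke part (3) of Theorem \ref{thm:newprop}: the standard module $\Delta(k,L)$ is uniserial with composition factors $L_{k,L}, \ldots, L_{k,l_k} = L_{k,L}$. Hence $\Delta(k,L) = L_{k,L}$, giving the second identity.

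Finally, I would apply Lemma \ref{lem:newone} to the minimal weight $(k,L)$: there is a short exact sequence
\[
0 \longrightarrow \Delta(k,L) \longrightarrow T(k,L) \longrightarrow X(k,L) \longrightarrow 0,
\]
with $X(k,L) \in \mathcal{F}(\{\Delta(k',l') : (k',l') \lhd (k,L)\})$. Since $(k,L)$ is minimal, this class of standard modules is empty, so $X(k,L) = 0$ and therefore $T(k,L) = \Delta(k,L) = L_{k,L}$. There is no real obstacle here; the result is essentially a direct unpacking of the definitions combined with the previously recorded properties of RUSQ algebras.
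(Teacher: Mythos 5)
Your proof is correct and follows essentially the same route as the paper: both arguments hinge on the observation that $(k,L)$ is minimal in $(\Lambda, \unlhd)$ together with Lemma \ref{lem:newone}. The paper phrases it slightly more compactly, bounding the composition factors of both $\Delta(k,L)$ and $T(k,L)$ directly by minimality, whereas you split it into two explicit steps (first $\Delta(k,L)=L_{k,L}$ via Theorem \ref{thm:newprop}, then $X(k,L)=0$ from the short exact sequence), but the substance is identical.
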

\begin{proof}
Recall the description of the indecomposable tilting modules in Lemma \ref{lem:newone}. The composition factor $L_{k,l}$ has multiplicity one in both $\Delta\B{k,l}$ and $T\B{k,l}$, and the composition factors of these two modules are of the form $L_{i,j}$, with $(i,j)\unlhd (k,l)$. The lemma follows from the fact that each $(k,L)$ is a minimal element in $( \Lambda , \unlhd)$.
\end{proof}

\begin{lem}
\label{lem:karinlastt}
Let $A$ be such that $\LL{P_i}=L$ for all $i$, $1 \leq i \leq n$. Let $M$ be in $\mathcal{F}\B{\Delta}$ and let $l$ be the smallest integer such that $(M:\Delta(k,l))\neq 0$ for some $k$. Then $\dssl{M}\leq L-l+1$.
\end{lem}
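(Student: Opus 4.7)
The plan is to induct on $\dssl{M}$, using Proposition \ref{prop:arxiv}(1) (which gives $\dssl{M/\delta(M)}=\dssl{M}-1$) together with the following key claim: if $l$ is the smallest second index of a standard appearing in a $\Delta$-filtration of $M$, then $M/\delta(M)$ has smallest such second index at least $l+1$. Once this claim is established, induction yields $\dssl{M/\delta(M)} \leq L-(l+1)+1 = L-l$, and hence $\dssl{M} = \dssl{M/\delta(M)}+1 \leq L-l+1$. The base case is $M=0$, where there is nothing to prove.

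To prove the key claim, I would first observe that under the hypothesis $\LL{P_i}=L$ for all $i$, Theorem \ref{thm:newprop}(3) tells us that $\Delta(x,y)$ has composition factors $L_{x,y},L_{x,y+1},\ldots,L_{x,L}$. Hence the composition factor $L_{k,l}$ appears in a standard factor $\Delta(x,y)$ of $M$ only when $x=k$ and $y\leq l$, and minimality of $l$ forces $y=l$. Consequently $[M:L_{k,l}]=(M:\Delta(k,l))$, and the same identity holds for any submodule of $M$ whose composition factors avoid $L_{k',l''}$ with $l''<l$ (which is automatic, since $M$ itself has no such composition factors).

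Next, I would argue that every factor $\Delta(k,l)$ in a $\Delta$-filtration of $M$ can be pushed to the bottom. Indeed, if $\Delta(k,l)$ sits immediately above $\Delta(k',l')$ with $l'\geq l$ (which holds for every standard factor of $M$ by minimality), then the extension
\[
0 \longrightarrow \Delta(k',l') \longrightarrow X \longrightarrow \Delta(k,l) \longrightarrow 0
\]
lives in $\Ext{R_A}{1}{\Delta(k,l)}{\Delta(k',l')}$, which vanishes unless $(k,l)\lhd(k',l')$, i.e.\ $l>l'$. Since we have $l\leq l'$, the extension splits and we may swap the two standards. Iterating, we reorder the filtration so that the bottom piece $N$ of $M$ is a direct sum $\bigoplus_k \Delta(k,l)^{(M:\Delta(k,l))}$, in particular $\Delta$-semisimple. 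By the definition of $\delta(M)$ as the largest $\Delta$-semisimple submodule (Proposition \ref{prop:dss1}), $N\subseteq \delta(M)$, so
\[
(\delta(M):\Delta(k,l)) \geq (M:\Delta(k,l)),
\]
and by additivity of $\Delta$-multiplicities in short exact sequences of modules in $\mathcal{F}(\Delta)$, the reverse inequality is automatic. Hence $(M/\delta(M):\Delta(k,l))=0$ for every $k$, which is exactly the key claim and closes the induction.

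\textbf{Anticipated obstacle.} The only nontrivial step is the reordering of the $\Delta$-filtration, but this is just the standard application of the Ext vanishing between standards and does not require the rigidity of $P_i$ or $Q_i$ --- only the hypothesis $\LL{P_i}=L$ for all $i$ to identify the minimal elements of $(\Lambda,\unlhd)$ and to guarantee the clean composition-factor structure of each $\Delta(x,y)$.
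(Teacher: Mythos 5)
Your proof is correct and takes essentially the same approach as the paper: the paper also deduces from the Ext-vanishing $\Ext{R_A}{1}{\Delta(k,l)}{\Delta(i,j)}=0$ for $j\geq l$ that every $\Delta(k,l)$-factor of $M$ is absorbed into $\delta_1(M)$, so that the minimal second index strictly increases in $M/\delta_1(M)$, and then applies Proposition \ref{prop:arxiv}(1). The only cosmetic difference is that the paper runs a downward induction on $l$ (with base case $l=L$) rather than an induction on $\dssl{M}$.
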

\begin{proof}
We use downwards induction on $l$. If $l=L$, then all the factors in a $\Delta$-filtration of $M$ are of the form $\Delta (k,L)$ for some $k$. Since two standard modules of this form have no nontrivial extensions, it follows that $M$ is a direct sum of standard modules, so that $M=\delta_1 (M)$ and $\dssl{M}=1$. 

Let $l <L$. Assume the claim holds for modules $N$ where a minimal $l'$ with $(N:\Delta (k,l'))\neq 0$ is such that $l < l' \leq L$. Consider $M$ as in the statement of the lemma. There is an exact sequence
\[
\begin{tikzcd}[ampersand replacement=\&]
0 \arrow{r} \& \delta_1\B{M} \arrow{r} \& M \arrow{r} \& M/ \delta_1(M) \arrow{r} \& 0 
\end{tikzcd}.
\]
Note that $\Ext{R_A}{1}{\Delta\B{k,l}}{\Delta\B{i,j}}=0$ for any $i$, $k$ and $j\geq l$. As a consequence, any $\Delta (k,l)$ which occurs in a $\Delta$-filtration of $M$ must occur in $\delta_1 (M)$ since it has no nontrivial extensions with any other standard module which may appear in a $\Delta$-filtration of $M$. Therefore, a minimal $l'$ with $(M/ \delta_1 (M):\Delta (k,l'))\neq 0$ satisfies $l<l'$. By the induction hypothesis, $M/ \delta_1(M)$ has $\Delta$-semisimple length at most $L-l'+1$. Proposition \ref{prop:arxiv} implies that $\dssl{M}\leq L-l'+1+1\leq L-l+1$. 
\end{proof}

\begin{prop}
\label{prop:tb1}
Let $A$ be such that $\LL{P_i}=L$ for all $i$, $1 \leq i \leq n$. Then, for every $(k,l)$ in $\Lambda$, we have
\[ T\B{k,l} \subseteq \Hom{A}{G}{\SOC{L-l+1}{Q_k}}= \delta_{L-l+1}\B{Q_{k,L}}= \delta_{L-l+1}\B{T\B{k,1}}.
\]
\end{prop}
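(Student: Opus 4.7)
The plan is to split the statement into two parts: first, the chain of equalities on the right is essentially a reformulation of earlier results, and second, the inclusion $T(k,l) \subseteq \delta_{L-l+1}(T(k,1))$ requires a bound on the $\Delta$-semisimple length of $T(k,l)$.

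For the equalities, I would observe that Theorem \ref{prop:standard} gives $T(k,1) = Q_{k,l_k} = Q_{k,L}$, and that $Q_{k,L} = \Hom{A}{G}{Q_k}$. Then Theorem \ref{thm:socdelta}, applied to $M = Q_k$, immediately yields
\[\delta_{L-l+1}(T(k,1)) = \delta_{L-l+1}(\Hom{A}{G}{Q_k}) = \Hom{A}{G}{\SOC{L-l+1}{Q_k}},\]
since the theorem handles the boundary case where $L-l+1$ exceeds $\LL{Q_k}$ automatically.

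For the inclusion, the main obstacle is producing an upper bound for $\dssl{T(k,l)}$. Here I would invoke Lemma \ref{lem:newone}, which gives a short exact sequence $0 \to \Delta(k,l) \to T(k,l) \to X(k,l) \to 0$ with $X(k,l) \in \mathcal{F}(\{\Delta(j'): j' \lhd (k,l)\})$. Combined with the description of $\unlhd$ in \eqref{eq:guentanaborrabo}, this implies that every standard module $\Delta(i,j)$ occurring in a $\Delta$-filtration of $T(k,l)$ satisfies $j \geq l$. Applying Lemma \ref{lem:karinlastt} with this minimal index then forces $\dssl{T(k,l)} \leq L-l+1$.

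Finally, I would assemble the pieces using the left exactness of $\delta_i$. By part 4 of Theorem \ref{thm:newprop}, $T(k,l) \subseteq T(k,1)$. Writing $N = T(k,l)$ and $M = T(k,1)$, Proposition \ref{prop:arxiv} gives $\delta_{L-l+1}(N) = N \cap \delta_{L-l+1}(M)$. Since $\dssl{N} \leq L-l+1$, we have $\delta_{L-l+1}(N) = N$, so
\[T(k,l) = N \cap \delta_{L-l+1}(M) \subseteq \delta_{L-l+1}(T(k,1)),\]
which completes the proof. The hard step is the bound on $\dssl{T(k,l)}$; the rest is a straightforward application of the machinery available for RUSQ algebras.
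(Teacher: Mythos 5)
Your proposal is correct and follows essentially the same route as the paper: establish the equalities from Theorems \ref{prop:standard} and \ref{thm:socdelta}, bound $\dssl{T(k,l)}$ via Lemma \ref{lem:karinlastt}, and conclude $T(k,l)\subseteq\delta_{L-l+1}(T(k,1))$. You make one hypothesis of Lemma \ref{lem:karinlastt} more explicit (using Lemma \ref{lem:newone} to see that every $\Delta(i,j)$ occurring in $T(k,l)$ has $j\geq l$, a fact the paper implicitly supplies through the observations in the proof of Lemma \ref{lem:pink}), and in the final step you invoke left exactness of $\delta_{L-l+1}$ directly rather than citing part 3 of Proposition \ref{prop:arxiv}; both variants are valid and amount to the same underlying argument.
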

\begin{proof}
According to Theorem \ref{prop:standard}, we have that $T\B{k,1}=Q_{k,L}=\Hom{A}{G}{Q_k}$. Theorem \ref{thm:socdelta} implies that
\[\Hom{A}{G}{\SOC{L-l+1}{Q_k}}= \delta_{L-l+1}\B{Q_{k,L}}=\delta_{L-l+1}\B{T\B{k,1}}.\]
By part 4 of Theorem \ref{thm:newprop}, it follows that $T\B{k,l}\subseteq T\B{k,1}$. By Lemma \ref{lem:karinlastt},  $\dssl{T\B{k,l}} \leq L-l+1$. According to part 3 of Proposition \ref{prop:arxiv}, this shows that $T\B{k,l}$ is contained in $\delta_{L-l+1}\B{T\B{k,1}}$.
\end{proof}

By Proposition \ref{prop:tb1}, if all the projectives in $\Mod{A}$ have the same Loewy length, then $T\B{k,l}$ is a submodule of $\delta_{L-l+1}\B{Q_{k,l}}$ for every $(k,l)$ in $\Lambda$. If additionally all projectives $P_i$ are rigid, then a $\Delta$-filtration of $T\B{k,l}$ has the same number of factors as a $\Delta$-filtration of $\delta_{L-l+1}\B{Q_{k,L}}$.
\begin{prop}
\label{prop:almostlastt}
Suppose that $A$ satisfies $\LL{P_i}=L$ for all $i$, $1 \leq i \leq n$. Assume that the projectives $P_i$ are rigid. Then the monic
\[
\begin{tikzcd}[ampersand replacement=\&]
\Hom{R_A}{P_{i,L}}{T\B{k,l}} \arrow[hook]{r} \& \Hom{R_A}{P_{i,L}}{\delta_{L-l+1}\B{Q_{k, L}}} 
\end{tikzcd}
\]
induced by the inclusion
\[
T\B{k,l} \subseteq \delta_{L-l+1}\B{Q_{k, L}}
\]
is an isomorphism. In particular, the modules $T\B{k,l}$ and $\delta_{L-l+1}\B{Q_{k, L}}$ are filtered by the same number of standard modules.
\end{prop}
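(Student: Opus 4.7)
The plan is to pass through the exact functor $\Hom{R_A}{P_{i,L}}{-}$. Since $P_{i,L}$ is projective, the inclusion $T\B{k,l}\subseteq \delta_{L-l+1}\B{Q_{k,L}}$ automatically induces a monomorphism on Hom, so the task is to promote this to an isomorphism by matching the lengths of the two Hom modules; equivalently, to establish the multiplicity equality $[T\B{k,l}:L_{i,L}] = [\delta_{L-l+1}\B{Q_{k,L}}:L_{i,L}]$.

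For the right-hand side, Theorem~\ref{thm:socdelta} identifies $\delta_{L-l+1}\B{Q_{k,L}}$ with $\Hom{A}{G}{\SOC{L-l+1}{Q_k}}$. Combining the full faithfulness of $\Hom{A}{G}{-}$ with the identification $P_{i,L}=\Hom{A}{G}{P_i}$ (valid because $\LL{P_i}=L$) gives
\[
\Hom{R_A}{P_{i,L}}{\delta_{L-l+1}\B{Q_{k,L}}} \cong \Hom{A}{P_i}{\SOC{L-l+1}{Q_k}},
\]
and the projectivity of $P_i$ then extracts $[\SOC{L-l+1}{Q_k}:L_i]$ as the relevant multiplicity.

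For the left-hand side, I would apply $\Hom{R_A}{P_{i,L}}{-}$ to the $\nabla$-filtration of $T\B{k,l}$ provided by Theorem~\ref{thm:newprop}(5), whose factors are $\nabla\B{k,l},\nabla\B{k,l+1},\ldots,\nabla\B{k,L}$. Exactness together with BGG reciprocity give
\[
[T\B{k,l}:L_{i,L}] = \sum_{j=l}^{L}[\nabla\B{k,j}:L_{i,L}] = \sum_{j=l}^{L}\B{P_{i,L}:\Delta\B{k,j}}.
\]
Theorem~\ref{thm:socdelta} applied to $P_{i,L}=\Hom{A}{G}{P_i}$ evaluates each summand as $[\USOC{j}{P_i}:L_k]$. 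The sum telescopes to $[P_i/\SOC{l-1}{P_i}:L_k]$, and rigidity of $P_i$ (together with $\LL{P_i}=L$) identifies $\SOC{l-1}{P_i}$ with $\RAD{L-l+1}{P_i}$, converting this into $[P_i/\RAD{L-l+1}{P_i}:L_k]$.

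The two sides then agree by the formal identity
\[
[P_i/\RAD{N}{P_i}:L_k] = [\SOC{N}{Q_k}:L_i]\qquad(N=L-l+1),
\]
valid for any Artin algebra: a homomorphism $P_i\to Q_k$ factors through $P_i/\RAD{N}{P_i}$ precisely when its image lies in $\SOC{N}{Q_k}$, so $\Hom{A}{P_i}{\SOC{N}{Q_k}}\cong\Hom{A}{P_i/\RAD{N}{P_i}}{Q_k}$, and computing the length of each side using projectivity of $P_i$ on one side and injectivity of $Q_k$ on the other forces the desired equality of multiplicities. The ``in particular'' statement on the number of standard factors then follows by summing the pointwise equality over $i$ and invoking Theorem~\ref{thm:newprop}(6). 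The one real obstacle is recognizing that rigidity of $P_i$ is precisely what makes the telescoping sum arising from the $\nabla$-filtration dovetail with the socle series of $Q_k$; everything else is bookkeeping with BGG reciprocity and the standard projective/injective Hom calculus.
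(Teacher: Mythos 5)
Your argument is correct but takes a genuinely different route from the paper's. The paper's proof is structural: it takes an arbitrary map $f_*=\Hom{A}{G}{f}$ from $P_{i,L}$ to $\delta_{L-l+1}\B{Q_{k,L}}$, observes that rigidity forces $\delta_{l-1}(P_{i,L})\subseteq \Ker{f_*}$, and then invokes the characterisation of $T(k,l)$ as the largest submodule of $T(k,1)$ with all composition factors $L_{x,y}$ satisfying $y\geq l$ (Lemma~5.7 of \cite{MR3510398}) to conclude $\Ima{f_*}\subseteq T(k,l)$. Your proof is instead a counting argument: you compute the $K$-length of both Hom-modules and show they coincide. Both strategies are sound; the paper's has the advantage of yielding the slightly stronger fact that the image of every such map lies in $T(k,l)$, which is spiritually closer to what is needed again in Proposition~\ref{prop:karinclaim2}, while yours is perhaps more mechanical once the $\nabla$-filtration and Theorem~\ref{thm:socdelta} are in hand.

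One caveat on the bookkeeping. Until Section~\ref{sec:furtherremarks} the paper works over a general Artin algebra, where $\End{A}{L_i}$ need not have $K$-length one. In that setting the two multiplicity identities you quote, namely BGG reciprocity in the form $[\nabla\B{k,j}:L_{i,L}]=\B{P_{i,L}:\Delta\B{k,j}}$ and the duality $[P_i/\RAD{N}{P_i}:L_k]=[\SOC{N}{Q_k}:L_i]$, each carry a hidden factor: the correct statements read
\[
[\nabla\B{k,j}:L_{i,L}]\cdot\operatorname{l}\B{\End{R_A}{L_{i,L}}}=\B{P_{i,L}:\Delta\B{k,j}}\cdot\operatorname{l}\B{\End{R_A}{L_{k,j}}}
\]
and
\[
[P_i/\RAD{N}{P_i}:L_k]\cdot\operatorname{l}\B{\End{A}{L_k}}=[\SOC{N}{Q_k}:L_i]\cdot\operatorname{l}\B{\End{A}{L_i}},
\]
where $\operatorname{l}$ denotes $K$-length. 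Using Lemma~\ref{lem:seraultimo}, $\operatorname{l}\B{\End{R_A}{L_{i,L}}}=\operatorname{l}\B{\End{A}{L_i}}$ and $\operatorname{l}\B{\End{R_A}{L_{k,j}}}=\operatorname{l}\B{\End{A}{L_k}}$, so the ratio introduced by BGG reciprocity is exactly cancelled by the one introduced by the final duality, and your conclusion $[T\B{k,l}:L_{i,L}]=[\delta_{L-l+1}\B{Q_{k,L}}:L_{i,L}]$ survives. But as written, your intermediate equalities of composition multiplicities are only literally true under the extra hypothesis $\dim\End{A}{L_i}=1$ for all $i$; the general case needs the $\operatorname{l}\B{\End{}{L}}$ factors carried along. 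The paper's image-chasing argument avoids this entirely, which is likely why it was chosen.
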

\begin{proof}
Recall that the $K$-module $\Hom{R_A}{P_{i,L}}{\delta_{L-l+1}\B{Q_{k,L}}}$ is isomorphic to the module $\Hom{A}{P_i}{\SOC{L-l+1}{Q_k}}$ via the functor  $\Hom{A}{G}{-}$. So consider a morphism $f:P_i \longrightarrow \SOC{L-l+1}{Q_k}$ and the corresponding map $f_*=\Hom{A}{G}{f}$. To prove the statement, we must show that the image of $f_*$ is contained in $T(k,l)$. According to \cite[Lemma $5.7$]{MR3510398}, it is enough to prove that all the composition factors of $\Ima{f_*}$ are of the form $L_{x,y}$ with $(x,y) \ntriangleright (k,l)$, that is, with $y \geq l$. The module $\SOC{L-l+1}{Q_k}$ has Loewy length $L-l+1$, hence $\RAD{L-l+1}{P_i} \subseteq \Ker{f}$. Now, $P_i$ is rigid and therefore $\RAD{L-l+1}{P_i}=\SOC{l-1}{P_i}$. Thus
\[
\delta_{l-1} (P_{i,L})=\Hom{A}{G}{\RAD{L-l+1}{P_i}} \subseteq \Ker{f_*}.
\]
By Theorem \ref{thm:socdelta} , the quotient $P_{i,L}/\delta_{l-1}(P_{i,L})$ is only filtered by standard modules $\Delta (s,t)$ with $t\geq l$, and hence all composition factors of $\Ima{f_*}$ are of the form $L_{x,y}$ with $y \geq l$. The last assertion in the statement of the proposition follows from part 6 of Theorem \ref{thm:newprop}.
\end{proof}

The last part of Proposition \ref{prop:almostlastt} suggests that the total number of $\Delta$-quotients is an important invariant in this setting. If $M$ is in $\mathcal{F}\B{\Delta}$, we denote the total number of $\Delta$-quotients of $M$ by $\operatorname{r} (M)$. In this context, we shall refer to $\operatorname{r}(M)$ as the \emph{rank} of $M\in \mathcal{F}(\Delta)$. By part 6 of Theorem \ref{thm:newprop}, the rank of $M\in \mathcal{F}(\Delta)$ is equal to the total number of composition factors of $M$ which are of the form $L_{k,l_k}$ as $k$ varies.

Using Proposition \ref{prop:almostlastt} it is possible to compute the $\Delta$-semi\-sim\-ple length of all tilting modules $T\B{k,l}$ in the case when all the projectives in $\Mod{A}$ are rigid and have the same Loewy length. 
\begin{lem}
\label{lem:claim1}
Suppose that $A$ satisfies $\LL{P_i}=L$ for all $i$, $1 \leq i \leq n$, and assume that the projectives are rigid. Then 
\[\dssl{T\B{k,l}}=\min\{L-l+1, \LL{Q_k}\},\]
for $(k,l) \in \Lambda$. In particular, if $\LL{Q_i}=L$ for all $i$, then $\dssl{T\B{k,l}}=L-l+1$ for all $(k,l) \in \Lambda$.
\end{lem}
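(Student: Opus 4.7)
The strategy is to sandwich $T(k,l)$ between two modules whose $\Delta$-semisimple length is already under control, and then use the rank invariant from Proposition~\ref{prop:almostlastt} to force equality. Write $m:=\LL{Q_k}$ and set
\[
X := \delta_{L-l+1}\B{T(k,1)} = \delta_{L-l+1}\B{Q_{k,L}} = \Hom{A}{G}{\SOC{L-l+1}{Q_k}},
\]
using Proposition~\ref{prop:tb1} and Theorem~\ref{thm:socdelta}. Since $\SOC{L-l+1}{Q_k}=\SOC{\min\{L-l+1,m\}}{Q_k}$ has Loewy length exactly $\min\{L-l+1,m\}$, Theorem~\ref{thm:socdelta} gives $\dssl{X}=\min\{L-l+1,m\}$.

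First I would prove the upper bound. Proposition~\ref{prop:tb1} provides the inclusion $T(k,l)\subseteq X$, and then Proposition~\ref{prop:arxiv}(2) yields $\dssl{T(k,l)}\le\dssl{X}=\min\{L-l+1,m\}$. This step is essentially formal given the preliminaries already established.

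For the lower bound, the key input is Proposition~\ref{prop:almostlastt}, which tells us that the inclusion $T(k,l)\hookrightarrow X$ preserves rank, i.e.\ $\operatorname{r}(T(k,l))=\operatorname{r}(X)$. Set $t:=\dssl{T(k,l)}$. By Proposition~\ref{prop:arxiv} (applied to the inclusion $T(k,l)\subseteq X$), we have $\delta_t(T(k,l))=T(k,l)\cap\delta_t(X)$, so $T(k,l)=\delta_t(T(k,l))\subseteq\delta_t(X)$, and therefore $\operatorname{r}(T(k,l))\le\operatorname{r}(\delta_t(X))$. Now $\delta_j(X)=\Hom{A}{G}{\SOC{j}{Q_k}}$ for every $j\le\dssl{X}$, so (as remarked in Subsubsection~\ref{subsubsec:manuela}) the rank of $\delta_j(X)$ equals the composition length of $\SOC{j}{Q_k}$, which is strictly increasing in $j$ throughout the range $0\le j\le\dssl{X}$. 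Combining this with $\operatorname{r}(T(k,l))=\operatorname{r}(X)$ forces $t\ge\dssl{X}$, giving the desired equality.

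The main technical obstacle is the lower bound, where one needs to transform the rank equality of Proposition~\ref{prop:almostlastt} into an inequality between $\Delta$-semisimple lengths; once the $\Delta$-semisimple filtration of $X$ is identified with the socle filtration of $\SOC{L-l+1}{Q_k}$ via Theorem~\ref{thm:socdelta}, the strict monotonicity of ranks along this filtration closes the argument. The final sentence of the lemma is then immediate, since $\LL{Q_i}=L$ implies $\min\{L-l+1,L\}=L-l+1$ for all $1\le l\le L$.
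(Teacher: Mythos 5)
Your proof is correct and rests on the same two pillars as the paper's: the containment $T(k,l)\subseteq\delta_{L-l+1}(Q_{k,L})$ and the rank equality from Proposition~\ref{prop:almostlastt}, together with the identification of the $\Delta$-semisimple filtration of $Q_{k,L}$ with the socle filtration of $Q_k$ via Theorem~\ref{thm:socdelta}. The only deviation is that you close the argument with a direct strict-monotonicity-of-rank count, where the paper instead cites the ``in particular'' clause of Lemma~\ref{lem:larkascending} (whose proof carries out the same rank bookkeeping), so the two are essentially the same argument presented differently.
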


In order to prove Lemma \ref{lem:claim1}, we will apply the following general principle.
\begin{lem}
\label{lem:larkascending}
Let $\B{B, \Phi, \sqsubseteq}$ be a RUSQ algebra, and let $M$ be in $\mathcal{F}\B{\Delta}$. Assume that $N$ is a submodule of $M$ satisfying $\operatorname{r}(N)=\operatorname{r}(M)$. There is a canonical monic $\underline{\delta}_i (N) \longrightarrow \underline{\delta}_i (M)$ for every $i$. If $\underline{\delta}_i (M) \cong \bigoplus_{\omega \in \Omega_i} \Delta (x_{\omega}, y_{\omega})$ (for some index set $ \Omega_i$), then
\[
\underline{\delta}_i (N) \cong \bigoplus_{\omega \in \Omega_i} \Delta (x_{\omega}, y_{\omega}')
\]
where $y_{\omega} \leq y_{\omega}'$ for all $\omega$. In particular, $\dssl{M}=\dssl{N}$.
\end{lem}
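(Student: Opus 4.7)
The plan is to construct the canonical monomorphism from the left-exactness of the functors $\delta_j$, then use the rank hypothesis together with closure of $\mathcal{F}(\Delta)$ under submodules to transfer the equality of ranks to each layer of the $\Delta$-semisimple filtration, and finally to read off the summand structure via a majorisation-type argument on composition multiplicities.

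First, I would produce the monic. By Proposition \ref{prop:arxiv} the functors $\delta_j$ satisfy $\delta_j(N)=N\cap\delta_j(M)$, so the inclusion $N\hookrightarrow M$ sends $\delta_i(N)$ into $\delta_i(M)$ and pulls $\delta_{i-1}(M)$ back to exactly $\delta_{i-1}(N)$. This induces the desired monomorphism $\underline{\delta}_i(N)\hookrightarrow\underline{\delta}_i(M)$. The target is $\Delta$-semisimple by construction and hence so is the source, by Proposition \ref{prop:dss1}(1).

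Next I would exploit the rank hypothesis. By Theorem \ref{thm:newprop}(6), $\operatorname{r}(X)=\sum_{k=1}^n[X:L_{k,l_k}]$ for any $X\in\mathcal{F}(\Delta)$. Since $N\hookrightarrow M$ gives $[N:L_{k,l_k}]\leq[M:L_{k,l_k}]$ for every $k$, equality of the totals forces equality term by term. The same reasoning, applied now to $\underline{\delta}_i(N)\hookrightarrow\underline{\delta}_i(M)$ and combined with additivity of composition multiplicities over the filtration layers, yields $[\underline{\delta}_i(N):L_{k,l_k}]=[\underline{\delta}_i(M):L_{k,l_k}]$ for all $i$ and $k$.

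The final step reads off the summand shape. Any nonzero morphism $\Delta(k,j)\to\Delta(k',j')$ has uniserial image with top $L_{k,j}$ sitting inside $\Delta(k',j')$, which forces $k=k'$; consequently the embedding $\underline{\delta}_i(N)\hookrightarrow\underline{\delta}_i(M)$ splits as a direct sum along the partition by first index. Within each first-index block $k$, the uniserial description of Theorem \ref{thm:newprop}(3) gives $[\Delta(k,y):L_{k,j}]=1$ exactly when $y\leq j\leq l_k$, so the equalities of composition multiplicities obtained in the previous step translate into the statement that the sorted multisets of $y$-values of the summands on both sides have equal counts below every threshold $j$. A standard majorisation observation (if $(a_r)$, $(b_r)$ are sorted sequences of equal length with $\#\{r:b_r\leq j\}\leq\#\{r:a_r\leq j\}$ for every $j$, then $a_r\leq b_r$ termwise) then produces the required bijection of summands with $y_\omega\leq y_\omega'$. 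Finally, $\dssl{M}=\dssl{N}$ is immediate since $\underline{\delta}_i(N)$ and $\underline{\delta}_i(M)$ have the same rank and thus vanish simultaneously. The main obstacle is this last step: promoting layerwise equality of composition multiplicities into a genuine matching of $\Delta$-summands requires both the block-diagonal decomposition along first indices and the majorisation comparison inside each block.
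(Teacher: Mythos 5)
Your proposal is correct and follows essentially the same strategy as the paper: establish $\operatorname{r}(\delta_i(N))=\operatorname{r}(\delta_i(M))$ and $\operatorname{r}(N/\delta_i(N))=\operatorname{r}(M/\delta_i(M))$ layer by layer via the squeeze $\operatorname{r}(M)\geq \operatorname{r}(\delta_i(N))+\operatorname{r}(N/\delta_i(N))=\operatorname{r}(N)=\operatorname{r}(M)$, then read the summand matching off the embedding of $\Delta$-semisimple modules of equal rank. The paper in fact leaves the last step (the block decomposition along first indices and the majorisation of second indices) entirely implicit, so your extra detail is welcome; one small slip is that for $j<l_k$ you only get $[\underline{\delta}_i(N):L_{k,j}]\leq[\underline{\delta}_i(M):L_{k,j}]$ rather than equality (equality holds only at the socle layer $j=l_k$), but since the majorisation lemma you invoke only needs the $\leq$ comparison, the conclusion stands.
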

\begin{rem}
\label{rem:larkascending}
Let $\B{B, \Phi, \sqsubseteq}$ be a RUSQ algebra. Recall from §\ref{subsubsec:manuela} that the functors $\delta_i$ satisfy $\delta_i(N)=N \cap \delta_i(M)$ for every $N$ and $M$ with $N\subseteq M$. This implies that there is a well-defined monomorphism $N/\delta_i(N) \longrightarrow M/ \delta_i (M)$, mapping $n + \delta_i(N)$ to $n + \delta_i(M)$.
\end{rem}
\begin{proof}[Proof of Lemma \ref{lem:larkascending}]
Note that $\delta_i (N) \subseteq \delta_i (M)$. Using part 6 of Theorem \ref{thm:newprop}, we deduce that $\operatorname{r}(\delta_i(N)) \leq \operatorname{r}(\delta_i(M))$. By Remark \ref{rem:larkascending}, we also conclude that $\operatorname{r}(N/\delta_i(N)) \leq \operatorname{r}(M/\delta_i(M))$. Since
\begin{align*}
\operatorname{r} \B{M} &= \operatorname{r} \B{\delta_i \B{M}}  + \operatorname{r} \B{M/ \delta_i \B{M}} \\
& \geq \operatorname{r} \B{\delta_i \B{N}}  + \operatorname{r} \B{N/ \delta_i \B{N}} \\
& = \operatorname{r} (N) =\operatorname{r} (M),
\end{align*}
it follows that $\operatorname{r} \B{\delta_i \B{M}} =\operatorname{r} \B{\delta_i \B{N}} $ and $\operatorname{r} \B{M/\delta_i \B{M}} =\operatorname{r} \B{N/\delta_i \B{N}}$ for all $i$. As a consequence, we deduce that $\operatorname{r} \B{\underline{\delta}_i \B{M}} =\operatorname{r} \B{\underline{\delta}_i \B{N}} $ for all $i$. Therefore, the canonical monic in Remark \ref{rem:larkascending} restricts to a monic $\underline{\delta}_i \B{N}\longrightarrow\underline{\delta}_i \B{M}$ between $\Delta$-semisimple modules which must satisfy the claim in the statement of the lemma.
\end{proof}

\begin{proof}[Proof of Lemma \ref{lem:claim1}]
By Proposition \ref{prop:almostlastt}, $T\B{k,l}$ is contained in $\delta_{L-l+1}\B{Q_{k, L}}$ and they have the same rank. According to Lemma \ref{lem:larkascending}, we must have $\dssl{T\B{k,l}}=\dssl{\delta_{L-l+1}\B{Q_{k, L}}}$. Using Theorem \ref{thm:socdelta}, we conclude that the $\Delta$-semisimple length of $Q_{k,L}=\Hom{A}{G}{Q_k}$ is $\LL{Q_k}$. From Proposition \ref{prop:arxiv}, we deduce that the $\Delta$-semisimple length of $\delta_{L-l+1}\B{Q_{k, L}}$ is given by $\min\{L-l+1, \LL{Q_k}\}$.
\end{proof}

We now go one step further and fully describe, in Theorem \ref{thm:claim2}, the $\Delta$-semisimple filtration of $T\B{k,l}$ in terms of the socle series of $Q_k$ in the case when all the projectives in $\Mod{A}$ are rigid and have the same Loewy length. For the proof of Theorem \ref{thm:claim2}, the following result will be useful.

\begin{lem}
\label{lem:superpinklast}
Let $\B{B, \Phi, \sqsubseteq}$ be a RUSQ algebra, and let $M$ be in $\mathcal{F}\B{\Delta}$, with $\dssl{M}=m\geq 2$. Suppose that $2 \leq i \leq m$, and let $\pi$ be a split epic mapping the $\Delta$-semisimple module $\delta_i\B{M}/ \delta_{i-1}\B{M}$ onto a summand $\Delta\B{k,l}$. Then the epic
\[
\begin{tikzcd}[ampersand replacement=\&]
\delta_i\B{M}/ \delta_{i-2}\B{M} \arrow[two heads]{r} \&\underline{ \delta}_i\B{M} \arrow[two heads]{r}{\pi}\&  \Delta\B{k,l}
\end{tikzcd}
\]
does not split.
\end{lem}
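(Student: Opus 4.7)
The plan is to argue by contradiction: suppose the composed epic
\[
e\colon N := \delta_i\B{M}/\delta_{i-2}\B{M} \twoheadrightarrow \underline{\delta}_i\B{M} \xrightarrow{\pi} \Delta\B{k,l}
\]
admits a section $s\colon \Delta\B{k,l} \to N$, and derive a contradiction from the fact that $\Delta\B{k,l}$ is itself $\Delta$-semisimple.

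The key preliminary step is to identify the maximal $\Delta$-semisimple submodule $\delta_1(N)$ of $N$. By the very definition of the $\delta$-filtration we have $\delta_1\B{M/\delta_{i-2}\B{M}} = \delta_{i-1}\B{M}/\delta_{i-2}\B{M}$. Now $N$ sits inside $M/\delta_{i-2}\B{M}$, so the left-exactness property recorded in Proposition \ref{prop:arxiv} (namely $\delta_j\B{N'} = N' \cap \delta_j\B{M'}$ whenever $N' \subseteq M'$) applied to this inclusion gives
\[
\delta_1\B{N} \;=\; N \cap \B{\delta_{i-1}\B{M}/\delta_{i-2}\B{M}} \;=\; \delta_{i-1}\B{M}/\delta_{i-2}\B{M}.
\]
Thus $\delta_1\B{N}$ is precisely the kernel of the first epic $q\colon N \twoheadrightarrow \underline{\delta}_i\B{M}$ appearing in the displayed factorisation $e = \pi \circ q$.

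With this in hand, the contradiction is immediate. The image $s\B{\Delta\B{k,l}}$ is a $\Delta$-semisimple submodule of $N$, so it is contained in the \emph{largest} such submodule, namely $\delta_1\B{N} = \Ker{q}$. Hence $q \circ s = 0$, which forces $e \circ s = \pi \circ q \circ s = 0$, contradicting $e \circ s = \mathrm{id}_{\Delta\B{k,l}}$.

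The only nontrivial input is the identification of $\delta_1\B{N}$, and in particular the use of the left-exactness of $\delta_1$ as a subfunctor of the identity, as in Proposition \ref{prop:arxiv}; I expect this to be the main (though modest) obstacle. Conceptually, the reason the epic cannot split is that any hypothetical section would have to land in the $\delta_{i-1}$-layer of $N$, where it cannot possibly survive the projection onto $\underline{\delta}_i\B{M}$.
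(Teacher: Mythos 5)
Your proof is correct. Both you and the paper hinge on the same key observation: for $N := \delta_i(M)/\delta_{i-2}(M)$, the left-exactness of the $\delta$-functor (Proposition~\ref{prop:arxiv}) identifies $\delta(N) = \delta_{i-1}(M)/\delta_{i-2}(M)$, which is precisely the kernel of the canonical epic $\varpi\colon N \twoheadrightarrow \underline{\delta}_i(M)$. Where you diverge is in the final step. The paper passes to socles: it embeds $\Soc(\underline{\delta}_{i-1}(M)) \oplus L_{k,l_k}$ into $\Soc{N}$ using the hypothetical splitting, then invokes the auxiliary fact $\Soc{N} = \Soc(\delta(N))$ to get a length contradiction. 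You instead observe directly that any section $s$ of the composite would have image a $\Delta$-semisimple submodule of $N$, hence $s(\Delta(k,l)) \subseteq \delta(N) = \Ker\varpi$, so $\varpi \circ s = 0$ and the composite annihilates $s$ --- an outright contradiction with $e \circ s = \mathrm{id}$. This is cleaner: you sidestep the socle lemma and the length count entirely, needing only the characterization of $\delta(N)$ as the trace of $\Delta$ (which makes $s(\Delta(k,l)) \subseteq \delta(N)$ tautological, since $s(\Delta(k,l))$ is visibly generated by a standard module). Both proofs are sound, but yours isolates the essential mechanism more sharply.
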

\begin{proof}
Denote the canonical epic $\delta_i\B{M}/ \delta_{i-2}\B{M} \longrightarrow \underline{ \delta}_i\B{M}$ by $\varpi$. Suppose, by contradiction, that $\pi \circ \varpi$ splits. Then $\delta_i\B{M}/ \delta_{i-2}\B{M} \cong \Ker{(\pi \circ \varpi)} \oplus \Delta\B{k,l}$. Note that $\Ker{\varpi} \subseteq \Ker{(\pi \circ \varpi)}$, and that $\Ker{\varpi}\cong \underline{ \delta}_{i-1}\B{M}$. As a consequence, there is a monic from $\Ker{\varpi}\oplus \Delta\B{k,l}$ to $\delta_i\B{M}/ \delta_{i-2}\B{M}$, so $\Soc{(\underline{\delta}_{i-1}\B{M})} \oplus L_{k,l_k}$ can be embedded in $\Soc{(\delta_i\B{M}/ \delta_{i-2}\B{M})}$. Using the first statement in Proposition \ref{prop:arxiv}, is easy to check that $\underline{\delta}_{i-1}\B{M}=\delta(\delta_i\B{M}/ \delta_{i-2}\B{M})$. Since $\Soc{N}=\Soc{\delta\B{N}}$ for any $B$-module $N$ (see \cite[§§$3.2.2$]{arxiv}), then $\Soc{(\delta_i\B{M}/ \delta_{i-2}\B{M})}=\Soc{(\underline{\delta}_{i-1}\B{M})}$, which leads to a contradiction.
\end{proof}

Recall that the $\Delta$-semisimple filtration of the projectives $P_{i,j}$ is determined by the radical series of $P_i$ when $P_i$ is rigid: this is a consequence of Theorem \ref{thm:socdelta}.
\begin{thm}
\label{thm:claim2}
Suppose that $A$ satisfies $\LL{P_i}=L$ for all $i$, $1\leq i \leq n$, and assume that the projectives are rigid. Let $(k,l)\in\Lambda$, and suppose that the socle layers of $Q_k$ are
\[
\USOC{i}{Q_k} \cong \bigoplus_{\omega \in \Omega^{k}_i} L_{x_{\omega}},
\]
for $i=1 , \ldots, \LL{Q_k}$. Then
\[
\underline{\delta}_i\B{T\B{k,l}} \cong \bigoplus_{\omega \in  \Omega^{k}_i} \Delta\B{x_{\omega}, l+i-1},
\]
for $i=1 , \ldots, \dssl{T\B{k,l}}$.
\end{thm}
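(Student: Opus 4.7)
The plan is to combine three ingredients that have already been assembled: the rank-preserving inclusion $T(k,l) \subseteq \delta_{L-l+1}(Q_{k,L})$ from Propositions \ref{prop:tb1} and \ref{prop:almostlastt}, the description of $\underline{\delta}_i(Q_{k,L})$ via the socle filtration of $Q_k$ from Theorem \ref{thm:socdelta}, and the layer-by-layer comparison of Lemma \ref{lem:larkascending}. These together pin down the \emph{shape} of each $\underline{\delta}_i(T(k,l))$ and reduce the problem to identifying a single second index per summand.

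I would first apply Theorem \ref{thm:socdelta} to $Q_k \in \Mod{A}$ to get $\underline{\delta}_i(Q_{k,L}) \cong \bigoplus_{\omega \in \Omega^k_i} \Delta(x_\omega, i)$. Since $\delta_i \circ \delta_j = \delta_{\min\{i,j\}}$ (Proposition \ref{prop:arxiv}), the same formula describes the first $\min\{L-l+1, \LL{Q_k}\}$ layers of $\delta_{L-l+1}(Q_{k,L})$. Feeding this into Lemma \ref{lem:larkascending}, applied to the rank-preserving inclusion $T(k,l) \subseteq \delta_{L-l+1}(Q_{k,L})$, yields
\[
\underline{\delta}_i(T(k,l)) \cong \bigoplus_{\omega \in \Omega^k_i} \Delta(x_\omega, y_\omega^{(i)}),
\]
for some integers $y_\omega^{(i)} \geq i$. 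At this point only the second indices remain to be determined.

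Next, I would prove by induction on $i$ that $y_\omega^{(i)} = l+i-1$. The base case $i=1$ uses that $\Soc{Q_k} = L_k$ is simple, so $\Omega^k_1$ is a singleton and $\delta(T(k,l)) = \Delta(k, y^{(1)})$; since Lemma \ref{lem:newone} embeds $\Delta(k,l)$ into $T(k,l)$, it lies inside $\delta(T(k,l))$, and uniseriality (Theorem \ref{thm:newprop}(3)) forces $y^{(1)} \leq l$. Combined with the fact that composition factors of $T(k,l)$ have second index $\geq l$ (another consequence of Lemma \ref{lem:newone}), we obtain $y^{(1)} = l$. For the inductive step, Lemma \ref{lem:superpinklast} guarantees that each summand $\Delta(x_\omega, y_\omega^{(i)})$ of the $i$-th layer cannot be split off from $\delta_i(T(k,l))/\delta_{i-2}(T(k,l))$, so it must have a nontrivial extension with some summand $\Delta(x_{\omega'}, l+i-2)$ of the $(i-1)$-th layer; the quasihereditary vanishing $\Ext{R_A}{1}{\Delta(\mu)}{\Delta(\nu)} \neq 0 \Rightarrow \mu \lhd \nu$ then gives $y_\omega^{(i)} > l+i-2$, hence $y_\omega^{(i)} \geq l+i-1$.

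The hard part will be the matching upper bound $y_\omega^{(i)} \leq l+i-1$. My plan is to extract it from a global count: the $\nabla$-filtration $T(k,l) \supset T(k,l+1) \supset \cdots \supset T(k,L)$ from Theorem \ref{thm:newprop}(5) pins down the composition multiplicities $[T(k,l):L_{k',m}]$ independently of the $\Delta$-side, while the $\Delta$-semisimple filtration obtained so far expresses the same multiplicities in terms of the $y_\omega^{(i)}$; combined with the lower bound, this forces equality throughout. An alternative route, should the bookkeeping become unwieldy, is downward induction on $l$ starting from the base case $l = L$ (Lemma \ref{lem:pink}) and using the extension $0 \to T(k,l+1) \to T(k,l) \to \nabla(k,l) \to 0$, with the inductive hypothesis controlling $T(k,l+1)$ and the constraints above controlling the shift as one passes to $T(k,l)$.
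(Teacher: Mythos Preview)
Your setup and lower bound coincide with the paper's Step~1: Lemma~\ref{lem:larkascending} gives $\underline{\delta}_i(T(k,l)) \cong \bigoplus_\omega \Delta(x_\omega, y_\omega^{(i)})$, and Lemma~\ref{lem:superpinklast} plus induction on $i$ yield $y_\omega^{(i)} \geq l+i-1$ (run this on a summand with \emph{minimal} $y_\omega^{(i)}$, so that the obstructing extension is forced into layer $i-1$ rather than possibly another layer-$i$ summand). The upper bound is where you diverge, and both proposed routes have a genuine gap. Approach~1 invokes the $\nabla$-filtration to ``pin down'' $[T(k,l):L_{k',m}]$, but that only rewrites the multiplicity as $\sum_{j\geq l}[\nabla(k,j):L_{k',m}]$, and the composition factors of the costandard modules are not among the given data. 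Approach~2 fares no better: the embedding of $\underline{\delta}_i(T(k,l+1))$ into $\underline{\delta}_i(T(k,l))$, combined with the lower bound, only pins $y_\omega^{(i)}$ to $\{l+i-1,\,l+i\}$; closing the remaining gap would again require knowing the length of $\nabla(k,l)$. A warning sign is that neither route makes a second use of the rigidity of the $P_i$.

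The paper's Step~2 bypasses costandard data entirely by a direct construction. View $U := \bigoplus_\omega \Delta(x_\omega, l+i-1)$ as a submodule of $\underline{\delta}_i(T(k,1))$, lift its projective cover $P = \bigoplus_\omega P_{x_\omega,\,l+i-1}$ through the epic $\delta_i(T(k,1)) \twoheadrightarrow \underline{\delta}_i(T(k,1))$ to obtain $f_* : P \to \delta_i(T(k,1))$, and then show $\Ima{f_*} \subseteq T(k,l)$. For this last step, the underlying $A$-map has image in $\SOC{i}{Q_k}$, hence image of Loewy length at most $i$; rigidity of the $P_{x_\omega}$ converts the resulting radical constraint on the kernel into $\delta_{l-1}(P) \subseteq \Ker{f_*}$, so every composition factor of $\Ima{f_*}$ has second index at least $l$, and the characterisation of $T(k,l)$ inside $T(k,1)$ (Lemma~5.7 of \cite{MR3510398}) places $\Ima{f_*}$ inside $T(k,l)$. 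This yields an embedding $U \hookrightarrow \underline{\delta}_i(T(k,l))$, which together with Step~1 forces $\underline{\delta}_i(T(k,l)) \cong U$.
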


We outline the strategy of the proof of Theorem \ref{thm:claim2}. Throughout, $l_k=L$ for all $k$, and all projectives are assumed to be rigid. Suppose that $\USOC{i}{Q_k}=\bigoplus_{\omega \in \Omega^{k}_i} L_{x_{\omega}}$. In order to prove that $\underline{\delta}_i\B{T\B{k,l}} \cong \bigoplus_{\omega \in  \Omega^{k}_i} \Delta\B{x_{\omega}, l+i-1}$, we proceed in \emph{two steps}.
\begin{enumerate}
\item We show that $\underline{\delta}_i (T(k,l)) \cong \bigoplus_{\omega \in  \Omega^{k}_i} \Delta\B{x_{\omega}, y_{\omega,l}}$ with $l+i-1 \leq y_{\omega,l} \leq L$, for all $1 \leq i \leq \dssl{T\B{k,l}}$. In particular, the rank $\operatorname{r}(\delta_{i}(T(k,l)))$ is constant on $\delta_{i}(T(k,l))$ as $l$ varies, for all $1 \leq i \leq \dssl{T\B{k,l}}$.
\item We show that $y_{\omega,l} =l+i-1$ for all $\omega \in  \Omega^{k}_i$.
\end{enumerate}

\begin{proof}[Proof of Theorem \ref{thm:claim2}]
Fix $k$ and $l$, with $1\leq k \leq n$ and $1 \leq l \leq L$.  Assume that the socle series of $Q_k$ is given by $\USOC{i}{Q_k}=\bigoplus_{\omega \in \Omega^{k}_i} L_{x_{\omega}}$. We prove the statement of the theorem by induction on $i$, $1 \leq i \leq \dssl{T\B{k,l}}$. Note that $\delta_1\B{T\B{k,l}}=\Delta \B{k,l}$. As $\Soc{Q_k}=L_k$, then $|\Omega_{1}^{k}|=1$ and $x_{\omega}=k$ for $\omega \in \Omega_{1}^{k}$, thus the claim holds trivially for $i=1$.  So let $i$ be such that $2 \leq i \leq \dssl{T\B{k,l}}$ and suppose, by induction, that
\[
\underline{\delta}_{i-1}\B{T\B{k,l}}=\bigoplus_{\omega \in \Omega_{i-1}^k} \Delta\B{x_{\omega}, l+i-2}.
\]
We wish to describe $\underline{\delta}_{i}\B{T\B{k,l}}$.
\paragraph*{Step 1}
\label{para:step1}
Recall that $T\B{k,1}=Q_{k,L}=\Hom{A}{G}{Q_k}$. By Theorem \ref{thm:socdelta}, $\underline{\delta}_i\B{T\B{k,1}}$ is isomorphic to $\bigoplus_{\omega \in  \Omega^{k}_i} \Delta\B{x_{\omega}, i}$. By Proposition \ref{prop:almostlastt}, $T\B{k,l}\subseteq \delta_{L-l+1}\B{T\B{k,1}}$ and $\operatorname{r}(T\B{k,l})=\operatorname{r}\B{\delta_{L-l+1}(T\B{k,1})}$. By Lemma \ref{lem:larkascending} (using Proposition \ref{prop:arxiv}), there is a canonical monic mapping $\underline{\delta}_i\B{T\B{k,l}} \longrightarrow \underline{\delta}_i\B{T\B{k,1}}$, and $\underline{\delta}_i\B{T\B{k,l}}$ is isomorphic to
\[
\bigoplus_{\omega \in  \Omega^{k}_i} \Delta\B{x_{\omega}, y_{\omega,l}},
\]
with $i \leq y_{\omega,l} \leq L$. We want to show that $ y_{\omega,l} \geq l+i-1$ for every $\omega$. Take $\omega' \in  \Omega^{k}_i$ so that the integer $y_{\omega ',l}$ is minimal. If $y_{\omega ',l} \leq l+i-2$, then, by induction, $(x_{\omega '},y_{\omega',l}) \not\vartriangleleft (x,y)$ for every standart module $\Delta (x,y)$ filtering the quotient $\delta_{i}(T(k,l))/\delta_{i-2}(T(k,l))$. Thus, the canonical epic
\[
\begin{tikzcd}[ampersand replacement=\&]
\delta_i\B{T\B{k,l}}/ \delta_{i-2}\B{T\B{k,l}} \arrow[two heads]{r} \& \underline{\delta}_i\B{T\B{k,l}} \arrow[two heads]{r}\&  \Delta\B{x_{\omega '},y_{\omega',l}}
\end{tikzcd}
\]
splits. This contradicts Lemma \ref{lem:superpinklast}, therefore $ y_{\omega,l} \geq l+i-1$ for every $\omega$.

\paragraph*{Step 2}
%\label{para:step2}
Let $U:=\bigoplus_{\omega \in  \Omega^{k}_i} \Delta\B{x_{\omega}, l+i-1}$, which is a submodule of $\underline{\delta}_i(T(k,1))$ and let $\iota:U \longrightarrow \underline{\delta}_i (T(k,1))$ be the inclusion map. Take $\nu:\delta_i(T(k,1)) \longrightarrow \underline{\delta}_i (T(k,1))$ to be the canonical epic with kernel $\delta_{i-1}(T(k,1))$ and consider a projective cover
\[
\pi: P:=\bigoplus_{\omega \in \Omega^{k}_i} P_{x_{\omega}, l+i-1} \longrightarrow U.
\]
There is a morphism $f_* :P \longrightarrow \delta_i (T(k,1))$ such that $\iota \circ \pi= \nu \circ f_*$. We claim that the image of $f_*$ is contained in $T(k,l)$. For this we need the characterisation of $T(k,l)$ in \cite[Lemma $5.7$]{MR3510398}: this is the largest submodule of $T(k,1)$ such that all composition factors are of the form $L_{x,y}$ with $y\geq l$. Recall that $f_*=\Hom{A}{G}{f}$ where
\[
f: \bigoplus_{\omega \in \Omega^{k}_i} P_{x_{\omega}}/\RAD{l+i-1}{P_{x_{\omega}}} \longrightarrow \SOC{i}{Q_k}.
\]
The image of $f$ has Loewy length at most $i$, thus $\bigoplus_{\omega \in \Omega^{k}_i} \RAD{i}{P_{x_{\omega}}}/\RAD{l+i-1}{P_{x_{\omega}}}$ is mapped to zero, and consequently
\[\Hom{A}{G}{\bigoplus_{\omega \in \Omega^{k}_i} \RAD{i}{P_{x_{\omega}}}/\RAD{l+i-1}{P_{x_{\omega}}}}\subseteq \Ker{f_*}.
\]
Using that the projective indecomposable $A$-modules are rigid, together with Theorem \ref{thm:socdelta}, this can be rewritten as $\delta_{l-1}(P) \subseteq \Ker{f_*}$. Therefore, the image of $f_*$ is a quotient of $P/\delta_{l-1}(P)$. From Theorem \ref{thm:socdelta}, we deduce that $\Ima{f_*}$ has only composition factors of the form $L_{x,y}$ with $y\geq l$. Hence $\Ima{f_*}\subseteq T(k,l)$ by a previous observation. As a consequence,
\[
\Ima{f_*} \subseteq T(k,l) \cap \delta_i \B{T\B{k,1}} = \delta_i \B{T\B{k,l}}.
\]
Using that $\delta_{i-1}(N)=N \cap \delta_{i-1}(M)$ for every $N$ and $M$ with $N\subseteq M$, one deduces that restriction of $\nu$ to $\delta_i \B{T\B{k,l}}$ factors through the canonical monic $\underline{\delta}_{i}(T(k,l)) \longrightarrow \underline{\delta}_{i}(T(k,1))$. Hence $\Ima{(\nu \circ f_*)}$ can be embedded in $\underline{\delta}_{i}(T(k,l))$. Since $\iota \circ \pi= \nu \circ f_*$, it follows that $\underline{\delta}_{i}(T(k,l))$ has a submodule isomorphic to $U$. By the conclusion of \nameref{para:step1}, we must have $\underline{\delta}_{i}(T(k,l))\cong U$.\end{proof}

\subsection{Proof of Theorem \ref{customthm:isoBlast}}

We finally prove the main result of this paper.

\begin{customthm}{A}
\label{customthm:isoBlast}
Suppose that $A$ satisfies $\LL{P_i}=\LL{Q_i}=L$ for all $i$, $1 \leq i \leq n$. Moreover, suppose that all projectives $P_i$ and all injectives $Q_i$ are rigid. Then
\[\mathcal{R}\B{R_A} \cong S_A \cong (R_{A\op})\op.\]
\end{customthm}

The two key ingredients for the proof of Theorem \ref{customthm:isoBlast} (Propositions \ref{prop:karinclaim1} and \ref{prop:karinclaim2}) rely on the description of the $\Delta$-semisimple filtration of the tilting modules given in Theorem \ref{thm:claim2}.

Recall that the underlying algebra $A$ is an Artin $K$-algebra. Therefore the ADR algebra $R_A$ is also an Artin $K$-algebra, and $\Hom{R_A}{X}{Y}$ lies in $\Mod{K}$ for $X$ and $Y$ in $\Mod{R_A}$.
\begin{prop}
\label{prop:karinclaim1}
Suppose that $A$ satisfies $\LL{P_i}=\LL{Q_i}=L$ for all $i$, $1 \leq i \leq n$, and assume that all projectives $P_i$ and all injectives $Q_i$ are rigid. Then the $K$-modules $\Tom{R_A}{T(k,l)}{T(i,j)}$ and $\Tom{R_A}{\delta_{L-l+1}(Q_{k,L})}{\delta_{L-j+1}(Q_{i,L})}$ have the same (Jordan--H\"older) length. 
\end{prop}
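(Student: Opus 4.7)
The plan is to compute both $K$-module lengths explicitly and check that they coincide with the same combinatorial expression in terms of socle layer multiplicities of $Q_k$.

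First, for the left-hand side: since $T(i,j)\in\mathcal{F}(\nabla)$ and $\Ext{R_A}{1}{\mathcal{F}(\Delta)}{\mathcal{F}(\nabla)}=0$, the functor $\Hom{R_A}{-}{T(i,j)}$ is exact on $\mathcal{F}(\Delta)$; likewise $\Hom{R_A}{\Delta(a,b)}{-}$ is exact on $\mathcal{F}(\nabla)$. Combined with the standard quasihereditary identity $\Hom{R_A}{\Delta(a,b)}{\nabla(c,d)}=\End{R_A}{L_{a,b}}$ if $(a,b)=(c,d)$ and $0$ otherwise, this yields
\[
\ell_K \Hom{R_A}{T(k,l)}{T(i,j)} \;=\; \sum_{(a,b)}\bigl(T(k,l):\Delta(a,b)\bigr)\bigl(T(i,j):\nabla(a,b)\bigr)\,\ell_K \End{R_A}{L_{a,b}}.
\]
By Theorem~\ref{thm:claim2}, $(T(k,l):\Delta(a,b))=[\USOC{b-l+1}{Q_k}:L_a]$ for $l\leq b\leq L$; by part~$5$ of Theorem~\ref{thm:newprop}, $(T(i,j):\nabla(a,b))=1$ iff $a=i$ and $j\leq b\leq L$; and the Morita picture $R_A=\End{A}{G}\op$ identifies $\End{R_A}{L_{i,b}}$ with $\End{A}{L_i}$, of $K$-length $e_i$ say. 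Substituting and re-indexing $s=b-l+1$, the LHS becomes $e_i \sum_{s=\max(j-l+1,\,1)}^{L-l+1}[\USOC{s}{Q_k}:L_i]$.

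Second, for the right-hand side: full faithfulness of $\Hom{A}{G}{-}$ identifies it with $\Hom{A}{\SOC{L-l+1}{Q_k}}{\SOC{L-j+1}{Q_i}}$. Because $\SOC{s}{Q_i}=\{q\in Q_i:\RAD{s}{A}\cdot q=0\}$, every such morphism factors uniquely through the quotient $\SOC{L-l+1}{Q_k}/\RAD{L-j+1}{\SOC{L-l+1}{Q_k}}$ and maps into the injective $Q_i$. Rigidity of $Q_k$ with Loewy length $L$ gives $\SOC{L-l+1}{Q_k}=\RAD{l-1}{Q_k}$ and $\RAD{L-j+1}{\RAD{l-1}{Q_k}}=\RAD{\min(L-j+l,\,L)}{Q_k}$; moreover, it identifies each radical layer $\RAD{t}{Q_k}/\RAD{t+1}{Q_k}$ with the socle layer $\USOC{L-t}{Q_k}$, yielding
\[
\bigl[\RAD{l-1}{Q_k}/\RAD{\min(L-j+l,\,L)}{Q_k}:L_i\bigr] \;=\; \sum_{s=\max(j-l+1,\,1)}^{L-l+1}[\USOC{s}{Q_k}:L_i].
\]
The elementary identity $\ell_K \Hom{A}{X}{Q_i}=[X:L_i]\cdot e_i$ (proved by applying the exact functor $\Hom{A}{-}{Q_i}$ to a composition series of $X$) then expresses the RHS as $e_i$ times the same sum.

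No conceptual obstacle is anticipated; the argument amounts to a careful bookkeeping of indices. The only real care is needed in the boundary case $j<l$, where $\RAD{L-j+l}{Q_k}$ vanishes by the Loewy length bound and the relevant quotient collapses to the whole $\RAD{l-1}{Q_k}$, matching precisely the truncation of the sum at $s=1$ on the left-hand side.
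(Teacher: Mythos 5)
Your argument is correct and follows essentially the same route as the paper: compute both $K$-module lengths as a sum of socle-layer multiplicities $[\USOC{s}{Q_k}:L_i]\cdot \dim\End{A}{L_i}$ over the same range $\max(j-l+1,1)\leq s\leq L-l+1$, using Theorem~\ref{thm:claim2} and the $\Ext^1$-vanishing on the tilting side, and full faithfulness plus rigidity of $Q_k$ on the other. The one step you assert rather than prove --- that $\End{R_A}{L_{i,b}}\cong\End{A}{L_i}$ --- is not an immediate consequence of $R_A=\End{A}{G}\op$ (it requires relating $\End{R_A}{L_{i,b}}$ to $\End{R_A}{\Delta(i,b)}$ and then to $\End{R_A}{\Delta(i,1)}=\End{A}{L_i}$); the paper supplies exactly this in Lemma~\ref{lem:seraultimo}, and you should cite or reprove it.
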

\begin{proof}
Denote the length of a module $M$ in $\Mod{K}$ by $\operatorname{l}(M)$. We will use the notation in the statement of Theorem \ref{thm:claim2} to describe the socle layers of the injective indecomposable module $Q_k$. 

We start by determining the value of $\operatorname{l}(\Tom{R_A}{T(k,l)}{T(i,j)})$. Using that the functor $\Hom{R_A}{-}{T(i,j)}$ preserves exact sequences in $\mathcal{F}\B{\Delta}$ (see \cite[Corollary 4]{last}), together with Theorem \ref{thm:claim2}, we deduce that
\[
\operatorname{l}\B{\Hom{R_A}{T\B{k,l}}{T\B{i,j}}}=\sum_{y=1}^{\dssl{T\B{k,l}}} \sum_{\omega \in \Omega_{y}^k} \operatorname{l}\B{\Hom{R_A}{\Delta\B{x_{\omega}, l+y-1}}{T\B{i,j}}}
\]
Lemma \ref{lem:claim1} implies that $\dssl{T\B{k,l}}=L-l+1$. By Theorem \ref{thm:newprop}, the module $T\B{i,j}$ is filtered by the costandard modules $\nabla\B{i,j}, \nabla\B{i,j+1}, \ldots, \nabla\B{i,L}$. Using again that $\Ext{R_A}{1}{\mathcal{F}(\Delta)}{\mathcal{F}(\nabla)} = 0$, we get
\begin{multline*}
\sum_{y=1}^{L-l+1} \sum_{\omega \in \Omega_{y}^k} \operatorname{l}\B{\Hom{R_A}{\Delta\B{x_{\omega}, l+y-1}}{T\B{i,j}}}  \\
=\sum_{y=1}^{L-l+1} \sum_{\omega \in \Omega_{y}^k} \sum_{z=j}^L \operatorname{l}\B{\Hom{R_A}{\Delta\B{x_{\omega}, l+y-1}}{\nabla\B{i,z}}}.
\end{multline*}
Note that
\begin{multline*}
\sum_{y=1}^{L-l+1} \sum_{\omega \in \Omega_{y}^k} \sum_{z=j}^L \operatorname{l}\B{\Hom{R_A}{\Delta\B{x_{\omega}, l+y-1}}{\nabla\B{i,z}}} \\
\begin{aligned}
&= \sum_{y=1}^{L-l+1} \sum_{\omega \in \Omega_{y}^k} \sum_{z=j}^L \delta_{\B{x_{\omega}, l+y-1},\B{i,z}} \operatorname{l}\B{\End{R_A}{\Delta\B{x_{\omega}, l+y-1}}}\\
&= \sum_{y=1}^{L-l+1} \sum_{\omega \in \Omega_{y}^k} \sum_{z=j}^L \delta_{\B{x_{\omega}, l+y-1},\B{i,z}} \operatorname{l}\B{\End{A}{L_{x_{\omega}}}}\\
&= \sum_{y=\max\{j-l, 0\}+1}^{L-l+1} \sum_{\omega \in \Omega_{y}^k} \delta_{x_{\omega},i} \operatorname{l}\B{\End{A}{L_{x_{\omega}}}}.\\
\end{aligned}
\end{multline*}
In here the second equality will follow from Lemma \ref{lem:seraultimo}, and the third equality follows by analysing the values taken by the Kronecker delta.

Now we calculate $\operatorname{l}(\Tom{R_A}{\delta_{L-l+1}(Q_{k,L})}{\delta_{L-j+1}(Q_{i,L})})$. Observe that
\begin{multline*}
\Hom{R_A}{\delta_{L-l+1}\B{Q_{k,L}}}{\delta_{L-j+1}\B{Q_{i,L}}} \\ 
\begin{aligned}
&= \Hom{R_A}{\Hom{A}{G}{\SOC{L-l+1}{Q_k}}}{\Hom{A}{G}{\SOC{L-j+1}{Q_i}}}\\
& \cong \Hom{A}{\SOC{L-l+1}{Q_k}}{\SOC{L-j+1}{Q_i}},\\
\end{aligned}
\end{multline*}
where the first equality follows from Proposition \ref{prop:tb1} and the second identity is due to the fact that $\Hom{A}{G}{-}$ is a fully faithful functor. Any map $f:\SOC{L-l+1}{Q_k}\longrightarrow\SOC{L-j+1}{Q_i}$ is such that $\LL{\Ima{f}} \leq L-j+1$, so $f$ must factor through the largest quotient of $\SOC{L-l+1}{Q_k}$ whose Loewy length is at most $L-j+1$. That is, $f$ factors through the module
\begin{multline*}
\SOC{L-l+1}{Q_k}/ \RAD{L-j+1}{\B{\SOC{L-l+1}{Q_k}}} \\
=\SOC{L-l+1}{Q_k}/ \SOC{\max\{L-l+1-(L-j+1),0\}}{Q_k},
\end{multline*}
where the equality follows from the rigidity of $Q_k$. So the canonical epic
\[\SOC{L-l+1}{Q_k}\longrightarrow\SOC{L-l+1}{Q_k}/ \SOC{\max\{j-l,0\}}{Q_k}\]
induces an isomorphism of $K$-modules
\begin{multline*}
\Hom{A}{\SOC{L-l+1}{Q_k}/ \SOC{\max\{j-l,0\}}{Q_k}}{\SOC{L-j+1}{Q_i}} \\ \cong\Hom{A}{\SOC{L-l+1}{Q_k}}{\SOC{L-j+1}{Q_i}}.
\end{multline*}
Notice that both $\SOC{L-l+1}{Q_k}/ \SOC{\max\{j-l,0\}}{Q_k}$ and $\SOC{L-j+1}{Q_i}$ are modules over $A/\RAD{L-j+1}{A}$. In fact, $\SOC{L-j+1}{Q_i}$ is an injective in $\Mod{(A/\RAD{L-j+1}{A})}$. Thus the restriction of $\Hom{A}{-}{\SOC{L-j+1}{Q_i}}$ to $\Mod{(A/\RAD{L-j+1}{A})}$ yields an exact functor. Therefore
\begin{multline*}
\operatorname{l}\B{\Hom{R_A}{\delta_{L-l+1}\B{Q_{k,L}}}{\delta_{L-j+1}\B{Q_{i,L}}}} \\
\begin{aligned}
&=\operatorname{l}\B{\Hom{A}{\SOC{L-l+1}{Q_k}/ \SOC{\max\{j-l,0\}}{Q_k}}{\SOC{L-j+1}{Q_i}}} \\
&=\sum_{y=\max\{j-l,0\}+1}^{L-l+1}\sum_{\omega \in \Omega_y^k} \operatorname{l}\B{\Hom{A}{L_{x_{\omega}}}{\SOC{L-j+1}{Q_i}}} \\
&=\sum_{y=\max\{j-l,0\}+1}^{L-l+1}\sum_{\omega \in \Omega_y^k} \operatorname{l}\B{\Hom{A}{L_{x_{\omega}}}{L_i}} \\
&=\sum_{y=\max\{j-l,0\}+1}^{L-l+1}\sum_{\omega \in \Omega_y^k} \delta_{x_{\omega},i} \operatorname{l}\B{\End{A}{L_{x_{\omega}}}}, 
\end{aligned}
\end{multline*}
which shows that $\Tom{R_A}{\delta_{L-l+1}(Q_{k,L})}{\delta_{L-j+1}(Q_{i,L})}$ and $\Tom{R_A}{T(k,l)}{T(i,j)}$ have the same length over $K$.
\end{proof}
\begin{lem}
\label{lem:seraultimo}
Let $\B{B, \Phi, \sqsubseteq}$ be a RUSQ algebra (over $K$). Then
\[
\End{B}{\Delta\B{i,j+1}} \cong \Hom{B}{\Delta\B{i,j+1}}{\Delta\B{i,j}} \cong \End{B}{\Delta\B{i,j}}
\]
as $K$-modules. If $B$ is the ADR algebra $R_A$ of an Artin algebra $A$ then the modules above are isomorphic to $\End{A}{L_i}$.
\end{lem}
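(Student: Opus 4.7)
The plan is to handle the two $K$-module isomorphisms via the uniserial structure of $\Delta(i,j)$ given by Theorem \ref{thm:newprop}, and then identify everything with $\End{A}{L_i}$ in the ADR case through the fully faithful functor $\Hom{A}{G}{-}$. For the first isomorphism, part 2 of Theorem \ref{thm:newprop} supplies the inclusion $\iota : \Delta(i,j+1) = \Rad{\Delta(i,j)} \hookrightarrow \Delta(i,j)$, and composition with $\iota$ defines an injective map $\End{B}{\Delta(i,j+1)} \to \Hom{B}{\Delta(i,j+1)}{\Delta(i,j)}$. Surjectivity follows because, by part 3 of Theorem \ref{thm:newprop}, the submodules of the uniserial $\Delta(i,j)$ form the chain $0 \subsetneq \Delta(i,l_i) \subsetneq \cdots \subsetneq \Delta(i,j+1) \subsetneq \Delta(i,j)$, and a length argument forces the image of any morphism $\Delta(i,j+1) \to \Delta(i,j)$ to lie in $\Delta(i,j+1)$, so every such morphism factors through $\iota$.

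For the second isomorphism I would apply $\Hom{B}{-}{\Delta(i,j)}$ to the short exact sequence $0 \to \Delta(i,j+1) \to \Delta(i,j) \to L_{i,j} \to 0$. Part 3 of Theorem \ref{thm:newprop} gives $\Soc{\Delta(i,j)} = L_{i,l_i}$, and $j < l_i$ (as $\Delta(i,j+1)$ is defined) yields $\Hom{B}{L_{i,j}}{\Delta(i,j)} = 0$; combined with the quasihereditary identity $\Ext{B}{1}{\Delta(i,j)}{\Delta(i,j)} = 0$, the long exact sequence collapses to
\[
0 \to \End{B}{\Delta(i,j)} \to \Hom{B}{\Delta(i,j+1)}{\Delta(i,j)} \to \Ext{B}{1}{L_{i,j}}{\Delta(i,j)} \to 0,
\]
so the claim reduces to the vanishing of $\Ext{B}{1}{L_{i,j}}{\Delta(i,j)}$.

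I expect this Ext vanishing to be the crux of the argument, and I would verify it by showing that every extension $0 \to \Delta(i,j) \to E \to L_{i,j} \to 0$ splits, performing case analysis on $\Top{E}$. If $\Top{E}$ is simple and equals $L_{i,j}$, then $E$ is a cyclic quotient of $P_{i,j}$ whose composition factors all satisfy $(i,k) \unlhd (i,j)$, so the largest-quotient description of $\Delta(i,j)$ forces $E$ to be a quotient of $\Delta(i,j)$, contradicting $[E:L_{i,j}] = 2 > 1 = [\Delta(i,j):L_{i,j}]$. If $\Top{E} = L_{i,j}^{\oplus 2}$, the two top projections $\pi_1 = \pi$ (the original surjection) and $\pi_2 : E \to L_{i,j}$ can be arranged so that $\Ker{\pi_2} \cap \Delta(i,j) = \Delta(i,j+1)$; composition-factor counting together with the largest-quotient property then identifies $\Ker{\pi_2} \cong \Delta(i,j)$. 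For an appropriate choice of top generators $t_1 \in \Delta(i,j)$ and $t_2 \in \Ker{\pi_2}$ one obtains $r(t_1 - t_2) = 0$ for all $r \in \Rad{B}$, so $t_1 - t_2$ spans a simple $L_{i,j}$-submodule of $\Soc{E}$ that meets $\Delta(i,j)$ trivially and supplies the desired splitting.

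For the ADR specialisation $B = R_A$, the standard quasihereditary identification $\End{R_A}{\Delta(i,j)} \cong \End{R_A}{L_{i,j}}$ (which follows from the analogous long exact sequence argument with codomain $L_{i,j}$, using $\Hom{R_A}{\Delta(i,j+1)}{L_{i,j}} = 0$) reduces everything to $\End{R_A}{L_{i,j}} \cong \End{A}{L_i}$. The full faithfulness of $\Hom{A}{G}{-} : \Mod{A} \to \Mod{R_A}$ gives $\End{R_A}{P_{i,j}} \cong \End{A}{P_i/\RAD{j}{P_i}}$ as $K$-algebras, and passing to the residue division rings of the two local algebras $\End{R_A}{P_{i,j}}$ and $\End{A}{P_i/\RAD{j}{P_i}}$ yields $\End{R_A}{L_{i,j}} \cong \End{A}{L_i}$.
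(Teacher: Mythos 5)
Your treatment of the first isomorphism (via the uniserial submodule lattice of $\Delta(i,j)$ and a length argument) is correct and is a valid alternative to the paper's route, which simply applies $\Hom{B}{\Delta(i,j+1)}{-}$ to the short exact sequence and uses $\Hom{B}{\Delta(i,j+1)}{L_{i,j}}=0$. You also correctly identify the vanishing of $\Ext{B}{1}{L_{i,j}}{\Delta(i,j)}$ as the crux. Your Case~1 of the $\Ext$-vanishing (simple top) is fine, and your ADR specialisation works, though it is longer than the paper's one-line observation that $\Delta(i,1)=\Hom{A}{G}{L_i}$ combined with full faithfulness of $\Hom{A}{G}{-}$.

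The genuine gap is in Case~2 of your $\Ext$-vanishing argument. You correctly deduce that $M' := \Ker{\pi_2}$ satisfies $M' \cap \Delta(i,j) = \Delta(i,j+1)$ and $M' \cong \Delta(i,j)$, but the assertion that ``for an appropriate choice of top generators $t_1 \in \Delta(i,j)$ and $t_2 \in M'$ one obtains $r(t_1-t_2)=0$ for all $r \in \Rad{B}$'' is not justified and is essentially circular. Choosing $t_2 = \theta(t_1)$ for an isomorphism $\theta:\Delta(i,j)\to M'$, one gets $r t_1 - r t_2 = r t_1 - \theta(r t_1)$, and this vanishes precisely when the induced automorphism $\theta|_{\Delta(i,j+1)}$ of $\Delta(i,j+1)$ is the identity. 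Correcting $\theta$ to achieve this requires lifting an arbitrary automorphism of $\Delta(i,j+1)$ to an automorphism of $\Delta(i,j)$ --- which is exactly the surjectivity of restriction $\End{B}{\Delta(i,j)} \to \End{B}{\Delta(i,j+1)}$, i.e.\ the second isomorphism you are trying to prove. The paper avoids this entirely: a nonsplit extension $E$ would have $\Soc{E} = \Soc{\Delta(i,j)} = L_{i,l_i}$, hence by part~7 of Theorem~\ref{thm:newprop} lies in $\mathcal{F}(\Delta)$, and by part~6 its rank is $[E:L_{i,l_i}]=1$, so $E$ itself is a standard module; but $[E:L_{i,j}]=2$, contradicting $[\Delta:L]\le 1$. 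You should replace your Case~2 argument by this rank-one argument (which also makes Case~2 redundant, since it forces any nonsplit $E$ to have simple top).
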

\begin{proof}
Consider the short exact sequence in $\Mod{B}$
\[
\begin{tikzcd}[ampersand replacement=\&]
0 \arrow{r} \& \Delta\B{i,j+1} \arrow{r} \& \Delta\B{i,j} \arrow{r} \&  L_{i,j} \arrow{r} \& 0.
\end{tikzcd}
\]
By applying the functor $\Hom{B}{\Delta\B{i,j+1}}{-}$ to this exact sequence we deduce that $\End{B}{\Delta\B{i,j+1}}\cong \Hom{B}{\Delta\B{i,j+1}}{\Delta\B{i,j}}$. Using $\Hom{B}{-}{\Delta\B{i,j}}$, we get an exact sequence
\[
\begin{tikzcd}[ampersand replacement=\&, column sep=small]
0 \arrow{r} \& \End{B}{\Delta\B{i,j}} \arrow{r} \& \Hom{B}{\Delta\B{i,j+1}}{\Delta\B{i,j}} \arrow{r} \&  \Ext{B}{1}{L_{i,j}}{\Delta\B{i,j}}.
\end{tikzcd}
\]
Note that $\Ext{B}{1}{L_{i,j}}{\Delta\B{i,j}}=0$. If this was not the case, there would exist a module $M$ with socle $L_{i,l_i}$, having a unique composition factor of type $L_{x,l_x}$ and satisfying $[M:L_{i,j}]=2$. According to parts 6 and 7 of Theorem \ref{thm:newprop}, $M$ would have to be a standard module. This cannot happen as $[M:L_{i,j}]=2$. This shows that the $K$-modules $\End{B}{\Delta\B{i,j}}$ and $\Hom{B}{\Delta\B{i,j+1}}{\Delta\B{i,j}}$ are isomorphic.

For the claim about $R_A$, recall that $\Delta\B{i,1}=\Hom{A}{G}{L_i}$. Since the functor $\Hom{A}{G}{-}$ is fully faithful, $\End{R_A}{\Delta\B{i,1}}\cong\End{A}{L_{i}}$.
\end{proof}

By Proposition \ref{prop:tb1}, the $R_A$-module $T\B{k,l}$ is contained in $\delta_{L-l+1}\B{Q_{k,L}}$. We will show that the maps in $\Hom{R_A}{\delta_{L-l+1}\B{Q_{k,L}}}{\delta_{L-j+1}\B{Q_{i,L}}}$ give rise to maps in $\Hom{R_A}{T\B{k,l}}{T\B{i,j}}$ via restriction. This is the final piece needed to prove Theorem \ref{customthm:isoBlast}. 
\begin{prop}
\label{prop:karinclaim2}
Suppose that $A$ satisfies $\LL{P_i}=\LL{Q_i}=L$ for all $i$, $1 \leq i \leq n$, and assume that all projectives $P_i$ and all injectives $Q_i$ are rigid. Consider a morphism
\[
f_* : \delta_{L-l+1}\B{Q_{k,L}} \longrightarrow \delta_{L-j+1}\B{Q_{i,L}}.
\]
Then $f_*(T\B{k,l})\subseteq T\B{i,j}$.
\end{prop}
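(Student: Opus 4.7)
My plan is to reduce the claim to a statement about composition factors. By \cite[Lemma $5.7$]{MR3510398}, the tilting module $T\B{i,j}$ is the largest submodule of $T\B{i,1}=Q_{i,L}$ whose composition factors are all of the form $L_{x,y}$ with $y\geq j$. Since $f_*\B{T\B{k,l}}\subseteq \delta_{L-j+1}\B{Q_{i,L}}\subseteq Q_{i,L}$, it therefore suffices to verify that every composition factor of $f_*\B{T\B{k,l}}$ has second index at least $j$.

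First I would lift $f_*$ to the level of $A$-modules. By Theorem \ref{thm:socdelta} the source and target of $f_*$ are $\Hom{A}{G}{\SOC{L-l+1}{Q_k}}$ and $\Hom{A}{G}{\SOC{L-j+1}{Q_i}}$ respectively, and since $\Hom{A}{G}{-}$ is fully faithful there is a unique $A$-linear map $f\colon \SOC{L-l+1}{Q_k}\longrightarrow \SOC{L-j+1}{Q_i}$ with $f_*=\Hom{A}{G}{f}$. Because $\Ima{f}\subseteq \SOC{L-j+1}{Q_i}$ has Loewy length at most $L-j+1$, the map $f$ factors through $\SOC{L-l+1}{Q_k}/\RAD{L-j+1}{\SOC{L-l+1}{Q_k}}$. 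Using that $Q_k$ is rigid (whence $\SOC{L-l+1}{Q_k}$ is itself rigid, with socle and radical filtrations inherited from $Q_k$), the offending radical power equals $\SOC{s}{Q_k}$, where $s:=\max\{j-l,0\}$. Hence $f$ annihilates $\SOC{s}{Q_k}$; applying $\Hom{A}{G}{-}$ and using Theorem \ref{thm:socdelta} once more, $f_*$ annihilates $\delta_s\B{Q_{k,L}}$.

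The final step is to piece things together using Theorem \ref{thm:claim2}. Since the functors $\delta_i$ are subfunctors of the identity (Proposition \ref{prop:arxiv}), we have $\delta_s\B{T\B{k,l}}=T\B{k,l}\cap \delta_s\B{Q_{k,L}}\subseteq \Ker{f_*}$, so $f_*|_{T\B{k,l}}$ factors through $T\B{k,l}/\delta_s\B{T\B{k,l}}$. An easy induction on the recursive definition of the functors $\delta_i$ gives $\underline{\delta}_m\B{T\B{k,l}/\delta_s\B{T\B{k,l}}}\cong \underline{\delta}_{s+m}\B{T\B{k,l}}$ for $m\geq 1$, so by Theorem \ref{thm:claim2} this $\Delta$-semisimple layer is isomorphic to $\bigoplus_{\omega\in\Omega^k_{s+m}}\Delta\B{x_\omega,\,l+s+m-1}$ for $m=1,\dots,L-l+1-s$. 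The composition factors of the standard module $\Delta\B{x_\omega,l+s+m-1}$ all have second index at least $l+s+m-1\geq l+s=\max\{l,j\}\geq j$, so every composition factor of $f_*\B{T\B{k,l}}$ satisfies the required condition, finishing the argument.

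I expect the main obstacle to be handling the cases $l\geq j$ and $l<j$ uniformly; introducing $s=\max\{j-l,0\}$ (so that $s=0$ collapses to the easy case) and exploiting rigidity of $Q_k$ to identify the relevant radical power of $\SOC{L-l+1}{Q_k}$ with a socle power of $Q_k$ is what makes the whole computation go through without case analysis.
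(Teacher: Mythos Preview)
Your proof is correct and follows essentially the same route as the paper's: lift $f_*$ to an $A$-map $f$ via full faithfulness of $\Hom{A}{G}{-}$, use rigidity of $Q_k$ to identify $\RAD{L-j+1}{\SOC{L-l+1}{Q_k}}$ with $\SOC{s}{Q_k}$ for $s=\max\{j-l,0\}$, deduce that $\delta_s\B{T\B{k,l}}\subseteq\Ker{f_*}$, and then invoke Theorem~\ref{thm:claim2} together with \cite[Lemma~5.7]{MR3510398}. The only cosmetic difference is that you spell out the layer-shift $\underline{\delta}_m\bigl(T\B{k,l}/\delta_s\B{T\B{k,l}}\bigr)\cong\underline{\delta}_{s+m}\B{T\B{k,l}}$ explicitly, whereas the paper absorbs this into a single sentence.
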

\begin{proof}
Because $\Hom{A}{G}{-}$ is a full functor, then $f_*=\Hom{A}{G}{f}$ for a map $f:\SOC{L-l+1}{Q_k} \longrightarrow \SOC{L-j+1}{Q_i}$ in $\Mod{A}$. Note that
\[\Ker{f} \supseteq \RAD{L-j+1}{\B{\SOC{L-l+1}{Q_k}}} = \SOC{\max\{j-l, 0\}}{Q_k}\]
as $Q_k$ is a rigid module. Write $z:=\max\{j-l, 0\}$. Since $T(k,l) \subseteq \delta_{L-l+1}\B{Q_{k,L}}$ and $z \leq L-l+1$, then 
\[
\delta_{z}\B{T(k,l)} \subseteq \delta_{z}\B{\delta_{L-l+1}\B{Q_{k,L}}} = \delta_{z}\B{Q_{k,L}}.
\]
Observe that,
\[
\delta_{z}\B{Q_{k,L}} = \Hom{A}{G}{\SOC{z}{Q_k}} \subseteq \Hom{A}{G}{\Ker{f}}=\Ker{f_*},
\]
so $\delta_{z}(T(k,l))$ is contained in the kernel of $f_*|_{T(k,l)}$. In other words, the module $f_*(T(k,l))$ is isomorphic to a quotient of $T(k,l)/ \delta_{z}(T(k,l))$. Theorem \ref{thm:claim2} implies that all composition factors of $T(k,l)/ \delta_{z}(T(k,l))$ are of the form $L_{x,y}$, with $y \geq l+z\geq j$. Therefore all composition factors of $f_*\B{T(k,l)}$ are of the form $L_{x,y}$, with $(x,y) \not \vartriangleright (i,j)$. By Lemma $5.7$ in \cite{MR3510398}, the module $f_*\B{T(k,l)}$ must be contained in $T(i,j)$.
\end{proof}

\begin{proof}[Proof of Theorem \ref{customthm:isoBlast}]
Consider the morphism of Artin $K$-algebras
\[\varphi : \End{R_A}{\bigoplus_{(i,j) \in \Lambda} \delta_{j}\B{Q_{i,L}}} \longrightarrow \End{R_A}{\bigoplus_{(i,j) \in \Lambda} T(i,j)}=\mathcal{R}\B{R_A}\op ,\]
which sends each map $g\in\END{R_A}{\bigoplus_{i=1}^n \bigoplus_{j=1}^L \delta_{L-j+1}(Q_{i,L})}$ to the corresponding restriction to $\bigoplus_{i=1}^n \bigoplus_{j=1}^L T(i,j)$. According to Proposition \ref{prop:karinclaim2}, $\varphi$ is well defined. Moreover, if $g\neq 0$ then $\varphi (g) \neq 0$, as the modules $\delta_{L-j+1}(Q_{i,L})$ have simple socle. So $\varphi$ is an injective morphism of $K$-algebras, and in particular, a monomorphism of modules in $\Mod{K}$. Proposition \ref{prop:karinclaim1} implies that $\varphi$ is a bijection.

As $\delta_{j}(Q_{i,L})=\Tom{A}{G}{\SOC{j}{Q_i}}$, then
\[
\End{R_A}{\bigoplus_{(i,j) \in \Lambda} \delta_{j}\B{Q_{i,L}}} \cong \End{A}{\bigoplus_{i=1}^n \bigoplus_{j=1}^L \SOC{j}{Q_i}} =(S_A)\op,
\]
using that $\Hom{A}{G}{-}$ is a fully faithful functor. Thus the algebras $\mathcal{R}(R_A)$ and $S_A$ are isomorphic. The identity $S_A \cong (R_{A\op})\op$ was established in Subsection \ref{subsec:motivation}.
\end{proof}
\begin{rem}
Note that the class of connected selfinjective algebras with radical cube zero but radical square nonzero satisfies the conditions of Theorem \ref{customthm:isoBlast}. This class contains several important examples and was studied in \cite{ES}.
\end{rem}

\begin{rem}
Let $A$ be an Artin algebra satisfying $A \cong A \op$. Suppose further that $A$ has rigid projectives and injectives, and assume they all have the same Loewy length. Then, by Theorem \ref{customthm:isoBlast}
\begin{equation}
\label{eq:thmbop}
\mathcal{R}\B{R_A}\cong S_A \cong \B{R_{A\op}}\op \cong \B{R_{A}}\op.
\end{equation}

In particular, the identity \eqref{eq:thmbop} holds when $A$ is a block of weight $1$ of a symmetric group algebra. According to \cite{Scopes2}, blocks of symmetric group algebras of weight $2$ must also satisfy the identity \eqref{eq:thmbop} when the base field has characteristic $p>2$.

Take any group algebra $KG$ where $G$ is a finite $p$-group and $K$ is a field of characteristic $p$. In this setting, $KG$ is a local symmetric algebra, and the only projective indecomposable $KG$-module is rigid: this follows from Jennings' Theorem (see \cite{MR0004626}, and also Theorem $3.14.6$ and Corollary $3.14.7$ in \cite{MR1644252}). Thus, the identity \eqref{eq:thmbop} holds for $A=KG$.

Finally, note that \eqref{eq:thmbop} also holds when $A$ is a preprojective algebra of type A$_n$.
\end{rem}

\section{Cartan matrices and multiplicities}
\label{sec:furtherremarks}
In this section we describe and compare the Cartan matrices of the algebras $R_A$, $\mathcal{R}\B{R_A}$ and $S_A$. Our ultimate goal is to demonstrate that the assumptions in the statement of Theorem \ref{customthm:isoBlast} are, in a certain sense, the minimal requirements for this result to hold. 

To make our arguments simpler, we will work, throughout this section, with \emph{finite-dimensional $K$-algebras $A$ satisfying $\dim \END{A}{L}=1$ for every simple module $L$}. We start by setting some notation. 

If $M$ is a module, we write $[M]$ for its image in the \emph{Grothendieck group} $G_0\B{A}$. Recall that a complete list of pairwise nonisomorphic simple $A$-modules $L_i$, with $i=1, \ldots,n$, gives rise to the $\mathbb{Z}$-basis $\{[L_i]:\,i=1, \ldots,n \}$ of $G_0\B{A}$.

The \emph{Cartan matrix} of $A$ will be denoted by $C(A)$. This is the $n\times n$ matrix whose column with label $j$ has the composition factors of the projective $P_j$. Here the integer $n$ corresponds to the number of isomorphism classes of simple $A$-modules. The entry $ij$ of $C(A)$ is given by $[P_j:L_i]$. 

Due to our assumptions about the simple $A$-modules, we have that $[P_j:L_i]=[Q_i:L_j]$, so the composition factors of the injective incomposable $A$-module $Q_i$ are recorded in the $i^{\text{th}}$ row of $C(A)$. 

Using Lemma \ref{lem:seraultimo}, our assumptions about $A$, and basic properties of quasihereditary algebras, it is not difficult to conclude that the ADR algebra $R_A$ of $A$ still satisfies $\dim\END{R_A}{L_{i,j}}=1$ for every $(i,j)$ in $\Lambda$. By similar arguments, the corresponding algebras $\mathcal{R}\B{R_A}$ and $S_A$ also satisfy the respective condition on simple modules.

%\label{sec:remarks}
%\subsection{Cartan matrices and multiplicities}
%\label{subsec:cartan}

%\begin{rem}
%\label{rem:silly}
%Let $\B{B, \Phi, \sqsubseteq}$ be a quasihereditary algebra. It is easy to deduce that $\End{B}{L_i}$, $\End{B}{\Delta\B{i}}$ and $\End{B}{\nabla\B{i}}$ are isomorphic as modules over the ring of scalars $K$.
%\end{rem}

\subsection{The Cartan matrix of \texorpdfstring{$R_A$}{[R]}}
The column of $C(R_A)$ associated with the label $(k,l)$ encodes the composition factors of the projective $P_{k,l}$, whereas the row with label $(i,j)$ describes the composition factors of $Q_{i,j}$.

\subsubsection{Projective \texorpdfstring{$R_A$}{[R]}-modules}
Each $P_{k,l}$ is filtered by standard modules. According to Theorem \ref{thm:socdelta}, the multiplicity of $\Delta\B{i,j}$ in $P_{k,l}$ corresponds to the multiplicity of the simple $A$-module $L_i$ in the $j^{\text{th}}$ socle layer of $P_k/\RAD{l}{P_k}$. That is,
\[(P_{k,l}:\Delta\B{i,j}) = [\USOC{j}{(P_k/\RAD{l}{P_k})}: L_i].
\]
We know the composition factors of the modules $\Delta\B{i,j}$ (these are $L_{i,j}, \ldots L_{i,l_i}$). Hence the Cartan matrix $C(R_A)$ is completely determined by the socle series of the radical quotients of the projective $A$-modules. 

The converse of the previous statement is also true: the socle quotients of $P_k/\RAD{l}{P_k}$ can be read off from the Cartan matrix of $R_A$. To see this, note that $[P_{k,l}:L_{i,j}]$ counts the number of factors $\Delta\B{i,y}$, $1 \leq y \leq j$, in a $\Delta$-filtration of $P_{k,l}$. Therefore,
\begin{equation}
\label{eq:karin3}[P_{k,l}:L_{i,j}] - [P_{k,l}:L_{i,j-1}]=(P_{k,l}:\Delta\B{i,j}) =[\USOC{j}{(P_k/\RAD{l}{P_k})}: L_i],
\end{equation}
for $j>1$. We also deduce the following identity 
\begin{equation}
\label{eq:karin4}
[P_{k,l}:L_{i,j}]=\sum_{y=1}^j(P_{k,l}:\Delta\B{i,y})=[\SOC{j}{(P_k/\RAD{l}{P_k})}: L_i].
\end{equation}
\subsubsection{Injective \texorpdfstring{$R_A$}{[R]}-modules}
By parts 4 and 5 of Theorem \ref{thm:newprop}, $Q_{i,j}$ has a $\nabla$-filtration with quotients $\nabla (i, y)$ for $1\leq y\leq j$, each of these occurring exactly once. Therefore, the composition factors of the costandard $R_A$-modules can be totally described in terms of the rows of $C(R_A)$. 
Namely,
\begin{align*}
[Q_{i,1}]  =  & [\nabla(i,1)] \\
[Q_{i,2}] =    &  [\nabla(i,1)] + [\nabla(i,2)]\\
\vdots & \cr
[Q_{i, l_i}]  =   &  [\nabla(i,1)] + [\nabla(i,2)] + \cdots + [\nabla(i, l_i)].
\end{align*}

\begin{lem}
\label{lem:karin2}
We have that
\[ [\nabla\B{i,j}]=[Q_{i,j}] - [Q_{i, j-1}]\]
for $1< j\leq l_i$ and $[Q_{i,1}] = [\nabla(i, 1)]$, that is, for $j>1$, the composition factors of $\nabla\B{i,j}$ can be computed by subtracting the row $(i,j-1)$ from the row $(i,j)$ of $C(R_A)$.
\end{lem}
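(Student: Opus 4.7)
The plan is to deduce the identity directly from the $\nabla$-filtration of $Q_{i,j}$ furnished by parts 4 and 5 of Theorem \ref{thm:newprop}, and then pass to the Grothendieck group. Concretely, Theorem \ref{thm:newprop} gives the chain $0\subset T(i,l_i)\subset T(i,l_i-1)\subset\cdots\subset T(i,1)=Q_{i,l_i}$ with successive quotients $T(i,y)/T(i,y+1)\cong \nabla(i,y)$ for $y<l_i$ and $T(i,l_i)\cong \nabla(i,l_i)$, together with the identification $Q_{i,j}\cong T(i,1)/T(i,j+1)$ for $j<l_i$ (and $Q_{i,l_i}=T(i,1)$). Thus $Q_{i,j}$ inherits a $\nabla$-filtration whose factors are exactly $\nabla(i,1),\nabla(i,2),\ldots,\nabla(i,j)$, each occurring once.

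Passing to $G_0(R_A)$, this filtration yields
\[
[Q_{i,j}] \;=\; \sum_{y=1}^{j}[\nabla(i,y)],
\]
which is the formula displayed in the text immediately preceding the lemma. The case $j=1$ gives $[Q_{i,1}]=[\nabla(i,1)]$, and for $j>1$ subtracting the identity for $j-1$ from the identity for $j$ yields the telescoping relation
\[
[\nabla(i,j)] \;=\; [Q_{i,j}]-[Q_{i,j-1}].
\]
Finally, since $[Q_{i,j}]$ records precisely the composition factors listed in the row labelled $(i,j)$ of $C(R_A)$, this subtraction is literally the difference of rows $(i,j)$ and $(i,j-1)$ of the Cartan matrix.

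There is no genuine obstacle: the entire argument is an immediate consequence of the already-established uniseriality/filtration data in Theorem \ref{thm:newprop}, plus the fact that taking classes in the Grothendieck group converts short exact sequences into additive relations. The only thing worth being careful about is the case distinction $j=l_i$ versus $j<l_i$ in identifying $Q_{i,j}$ as a quotient of $T(i,1)$, but in both cases the resulting $\nabla$-filtration has the same shape, so the final identity is uniform in $j$.
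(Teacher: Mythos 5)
Your argument is correct and is essentially the same as the paper's: both derive the $\nabla$-filtration of $Q_{i,j}$ from parts 4 and 5 of Theorem \ref{thm:newprop}, pass to the Grothendieck group to get $[Q_{i,j}]=\sum_{y=1}^{j}[\nabla(i,y)]$, and subtract consecutive identities. No meaningful difference in approach.
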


We can also describe the composition factors of the tilting $R_A$-modules $T\B{i,j}$ using the Cartan matrix of $R_A$. By part 5 of Theorem \ref{thm:newprop},
\[[T(i,j)] = [Q_{i, l_i}] - [Q_{i, j-1}].\]
That is, one can compute the composition factors of $T(i,j)$ by taking the difference of two rows in $C(R_A)$. 

\subsection{The Cartan matrix of \texorpdfstring{$\mathcal{R}\B{R_A}$}{[R(R)]}}
\label{subsec:cartan}
The Cartan matrix $C(\mathcal{R}\B{R_A})$ of $\mathcal{R}\B{R_A}$ has entries
\begin{equation}
\label{eq:ringeldualadr}
[P_{k,l}':L_{i,j}']=\dim\Hom{\mathcal{R}\B{R_A}}{P_{i,j}'}{P_{k,l}'}=\dim\Hom{R_A}{T\B{i,j}}{T\B{k,l}}.
\end{equation}
Note that
\[ 
\dim\Hom{R_A}{T\B{i,j}}{T\B{k,1}}=\dim\Hom{R_A}{T\B{i,j}}{Q_{k,l_k}}=[T\B{i,j}:L_{k,l_k}].
 \]
Using that $\Ext{R_A}{1}{\mathcal{F}\B{\Delta}}{\mathcal{F}\B{\nabla}}$ vanishes, together with Theorem \ref{thm:newprop}, we deduce that
\begin{multline*}
\dim\Hom{R_A}{T\B{i,j}}{T\B{k,l}}\\
\begin{aligned}
&=\dim\Hom{R_A}{T\B{i,j}}{Q_{k,l_k}} -\dim\Hom{R_A}{T\B{i,j}}{Q_{k,l-1}}\\
&=[T(i,j):L_{k,l_k}]- [T(i,j):L_{k,l-1}]
\end{aligned}
\end{multline*}
for $l>1$. That is, the entries of $C(\mathcal{R}(R_A))$ are given by
\[
[P_{k,l}':L_{i,j}']=[T\B{i,j}:L_{k,l_k}]-[T\B{i,j}:L_{k,l-1}]
\]
for $l>1$. The identity $Q_{i,j-1} \cong Q_{i,l_i}/T\B{i,j}$ implies the following result.
\begin{cor}
\label{cor:cartan}
We have
\begin{equation}
\label{eq:cartan}
[P_{k,l}':L_{i,j}']=[Q_{i,l_i}:L_{k,l_k}] - [Q_{i,j-1}:L_{k,l_k}]-[Q_{i,l_i}:L_{k,l-1}] + [Q_{i,j-1}:L_{k,l-1}],
\end{equation}
where $Q_{i,0}:=0$ and $[N:L_{k,0}]:=0$ for $N$ in $\Mod{\mathcal{R}(R_A)}$. In particular, the Cartan matrix of $\mathcal{R}\B{R_A}$ is determined by the Cartan matrix of $R_A$. 
\end{cor}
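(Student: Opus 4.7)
The plan is to put together two ingredients already made available in the text and to observe that the resulting expression involves only entries of $C(R_A)$. From the computation immediately preceding the corollary I will use
\[
[P_{k,l}':L_{i,j}'] = [T(i,j):L_{k,l_k}] - [T(i,j):L_{k,l-1}]
\]
for $l>1$, while for $l=1$ the identity $T(k,1)=Q_{k,l_k}$ yields directly $[P_{k,1}':L_{i,j}'] = [T(i,j):L_{k,l_k}]$. The second ingredient is the identity $Q_{i,j-1} \cong Q_{i,l_i}/T(i,j)$ from part 5 of Theorem \ref{thm:newprop} (with the convention $Q_{i,0}:=0$ handling $j=1$, since $T(i,1)=Q_{i,l_i}$), which gives in the Grothendieck group the identity
\[
[T(i,j):L_{k,m}] = [Q_{i,l_i}:L_{k,m}] - [Q_{i,j-1}:L_{k,m}]
\]
for every $k$ and every $m$.

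Substituting the second identity into the first will immediately produce \eqref{eq:cartan} in the case $l>1$. For the boundary case $l=1$, I will note that the convention $[N:L_{k,0}]:=0$ forces the last two terms of \eqref{eq:cartan} to vanish, leaving precisely $[T(i,j):L_{k,l_k}]$, so the formula holds uniformly. The ``in particular'' clause will then be immediate: each summand on the right of \eqref{eq:cartan} is either zero by convention or a multiplicity $[Q_{i,m}:L_{k,n}]$ of a simple $R_A$-module in an injective indecomposable $R_A$-module, and by the discussion opening Section \ref{sec:furtherremarks} such multiplicities are exactly the entries recorded in the rows of $C(R_A)$.

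I do not expect any real obstacle here: the substantive work has already been carried out in \S\ref{subsec:cartan}. The only place where some care is needed is the bookkeeping for the boundary indices $l=1$ and $j=1$, where one must check that the stated conventions $Q_{i,0}=0$ and $[N:L_{k,0}]=0$ really do reproduce the correct values rather than spurious ones.
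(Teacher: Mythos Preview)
Your proposal is correct and follows exactly the route the paper intends: the paper does not even give a separate proof, simply remarking that ``the identity $Q_{i,j-1}\cong Q_{i,l_i}/T(i,j)$ implies the following result'' after having already derived $[P_{k,l}':L_{i,j}']=[T(i,j):L_{k,l_k}]-[T(i,j):L_{k,l-1}]$. Your handling of the boundary cases $l=1$ and $j=1$ via the stated conventions is precisely what is needed to make the formula uniform.
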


\begin{rem}
\label{rem:stuttgart}
The result above can be stated more generally for RUSQ (and dually for LUSQ) algebras. In fact, if $B$ is a RUSQ algebra satisfying $\dim \End{B}{L}=1$ for every simple module $L$, then the Cartan matrix of $\mathcal{R}(B)$ is determined by the Cartan matrix of $B$ via the formula in \eqref{eq:cartan}. To deduce that the Cartan matrix of $\mathcal{R}(B)$ is determined by the Cartan matrix of $B$ for $B$ a LUSQ algebra, note that: $B\op$ is RUSQ, $C(B\op)=C(B)^{\operatorname{T}}$ and $\mathcal{R}\B{B\op}\cong \mathcal{R}\B{B}\op$.
\end{rem}

%(4) \ Two of these numbers come from multiplicities of $A$-modules. Namely
%
%$$(Q_{k, l_k}:L_{i, l_i}) = \dim {\rm Hom}_{R_A}(P_{i, l_i}, Q_{k, l_k})
% = \dim {\rm Hom}_A(P_i, Q_k)  = (Q_k:L_i) = (P_i:L_k).
%$$
%Similarly we have
%\begin{align*}(Q_{k, l_k}:L+{i, j-1}) = & \dim {\rm Hom}(P_{i, j-1}, Q_{k, l_k})\cr
%= \dim {\rm Hom}_A(Ae_i/J^{j-1}, Q_k) 
%= & (Ae_i/J^{j-1}:L_k) = ({\rm soc}_{j-1}(Q_k):L_i).
%\end{align*}
%
%
%
%\bigskip

\subsection{The Cartan matrix of \texorpdfstring{$S_A$}{[S]}}
\label{subsec:cartan1}
The Cartan matrix of $S_A$ has entries
\[
[P_{k,l}^{S_A}:L_{i,j}^{S_A}]=\dim\Tom{S_A}{P_{i,j}^{S_A}}{P_{k,l}^{S_A}}=\dim\Hom{A}{\SOC{j}{Q_i}}{\SOC{l}{Q_k}}.
\]
Observe that
\[
\Tom{A}{\SOC{j}{Q_{i}}}{\SOC{l}{Q_{k}}}\cong \Tom{A}{\SOC{j}{Q_{i}}/\RAD{l}{\B{\SOC{j}{Q_{i}}}}}{\SOC{l}{Q_{k}}},
\]
and $\SOC{l}{Q_{k}}$ is an injective indecomposable $A/\RAD{k}{A}$-module. Therefore, we have the following formula:
\begin{equation}
\label{eq:saadr}
[P_{k,l}^{S_A}:L_{i,j}^{S_A}]=[\SOC{j}{Q_{i}}/\RAD{l}{\B{\SOC{j}{Q_{i}}}}:L_k].
\end{equation}

\subsection{Comparing \texorpdfstring{$C(\mathcal{R}(R_A))$}{[C(R(R))]} with \texorpdfstring{$C(S_A)$}{[C(S)]}}
In Subsection \ref{subsec:motivation}, we have looked at some facts hinting at a relationship between the quasihereditary algebras
\[(\mathcal{R}(R_A), \Lambda, \unlhd\op) \quad\text{and}\quad(S_A, \Lambda_{A\op}, \unlhd_{A\op}).\]

In particular, we have seen that it would be reasonable to require that $\LL{P_i}=\LL{Q_i}=l_i$ for all $i$, in order to have an isomorphism between $\mathcal{R}(R_A)$ and $S_A$. This requirement would at least assure that $|\Lambda|=|\Lambda_{A\op}|$. We have also seen that it would be natural to map an idempotent $\xi_{(i,j)}'$ in $\mathcal{R}(R_A)$ associated with the label $(i,j)$ to an idempotent $\varepsilon_{[i,l_i-j+1]}$ in $S_A$ associated with the label $[i,l_i-j+1]$.

For this correspondence to yield an isomorphism, the numbers
\begin{gather*}
\dim \xi_{(i,j)}'\mathcal{R}(R_A)\xi_{(k,l)}'=[P_{k,l}':L_{i,j}']\\
\dim \varepsilon_{[i,l_i-j+1]}S_A\varepsilon_{[k,l_k-l+1]}=[P_{k,l_k-l+1}^{S_A}:L_{i,l_i-j+1}^{S_A}]
\end{gather*}
should coincide. That is, the entry $(i,j)(k,l)$ of $C(\mathcal{R}(R_A))$ must match with the entry $[i,l_i-j+1][k,l_k-l+1]$ of $C(S_A)$ for every $i$,$k$,$j$ and $l$.

Our aim is to show that the assumptions in the statement of Theorem \ref{customthm:isoBlast} are somehow minimal. For this, consider the conditions:
\begin{description}
\item[(B1)\label{item:B1n}] $\LL{P_i}=\LL{Q_i}=l_i$ for all $1 \leq i \leq  n$;
\item[(B2)\label{item:B2n}] $[P_{k,l}':L_{i,j}']=[P_{k,l_k-l+1}^{S_A}:L_{i,l_i-j+1}^{S_A}]$ for all $1 \leq i,k \leq n$, $1\leq j \leq l_i$ and $1 \leq l \leq  l_k$.
\end{description}
We wish to prove the following result.
\begin{customthm}{B}
\label{thm:karin}
Let $A$ be a finite-dimensional connected $K$-algebra. Suppose that $\dim \End{A}{L_i} =1$ for every $i$, and assume that \ref{item:B1n} and \ref{item:B2n} hold. Then:
\begin{enumerate}
\item all the Loewy lengths $l_i$ are the same (i.e.~$l_i=l_k$ for all $i$ and $k$);
\item each projective $P_i$ is rigid;
\item each injective $Q_i$ is rigid.
\end{enumerate}
\end{customthm}
We prove this theorem in a number of steps. 

\begin{lem}
\label{lem:karinlast}
Assume \ref{item:B1n} and \ref{item:B2n}. Then for all $i$ we have
\[
T\B{i,l_i}=\Delta\B{i,l_i}=\nabla\B{i,l_i}= L_{i,l_i}.
\]
\end{lem}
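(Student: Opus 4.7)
The plan is to use hypothesis \ref{item:B2n} at the extreme pair $j=l_i$, $l=1$ to pin down the composition multiplicity $[T(i,l_i):L_{k,l_k}]$, and then invoke part 6 of Theorem \ref{thm:newprop} to force the $\Delta$-filtration of $T(i,l_i)$ to have just one factor. The identifications $\Delta(i,l_i)=L_{i,l_i}$ and $T(i,l_i)\cong\nabla(i,l_i)$ come for free from parts 2 and 5 of Theorem \ref{thm:newprop}, so the real content is to show $T(i,l_i)=\Delta(i,l_i)$.

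Concretely, I would evaluate both sides of \ref{item:B2n} at $(j,l)=(l_i,1)$. On the $\mathcal{R}(R_A)$ side, formula~\eqref{eq:ringeldualadr}, together with the identification $T(k,1)=Q_{k,l_k}$ from Theorem~\ref{prop:standard} and the fact (noted in the preamble to this section) that $\dim\End{R_A}{L_{k,l_k}}=1$, gives
\[
[P_{k,1}':L_{i,l_i}']=\dim\Hom{R_A}{T(i,l_i)}{Q_{k,l_k}}=[T(i,l_i):L_{k,l_k}].
\]
On the $S_A$ side, formula~\eqref{eq:saadr} with $j=1$, $l=l_k$ yields
\[
[P_{k,l_k}^{S_A}:L_{i,1}^{S_A}]=[\SOC{1}{Q_i}/\RAD{l_k}{\SOC{1}{Q_i}}:L_k]=[L_i:L_k]=\delta_{i,k},
\]
since $\Soc{Q_i}=L_i$ is simple and we are assuming $\dim\End{A}{L_i}=1$. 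Hypothesis \ref{item:B2n} then forces $[T(i,l_i):L_{k,l_k}]=\delta_{i,k}$ for every $k$.

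Finally, part 6 of Theorem~\ref{thm:newprop} tells us that the total number of standard modules in any $\Delta$-filtration of $T(i,l_i)$ equals $\sum_{k=1}^n[T(i,l_i):L_{k,l_k}]=1$. Applied to the short exact sequence $0\to\Delta(i,l_i)\to T(i,l_i)\to X(i,l_i)\to0$ of Lemma~\ref{lem:newone} (whose cokernel lies in $\mathcal{F}(\{\Delta(j):j\lhd(i,l_i)\})$), this forces $X(i,l_i)=0$, so $T(i,l_i)=\Delta(i,l_i)$. Combining with parts 2 and 5 of Theorem~\ref{thm:newprop} gives the full chain $T(i,l_i)=\Delta(i,l_i)=L_{i,l_i}=\nabla(i,l_i)$. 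There is no serious obstacle here; the argument is a direct unpacking of \ref{item:B2n} against the Cartan-entry formulas of Subsections~\ref{subsec:cartan} and~\ref{subsec:cartan1}, with \ref{item:B1n} needed only to guarantee that the indices $[i,1]$ and $[k,l_k]$ are legitimate elements of $\Lambda_{A\op}$ in the prescribed matching.
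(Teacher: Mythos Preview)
Your proof is correct and rests on the same core idea as the paper's: evaluate \ref{item:B2n} at the extreme index $j=l_i$ and observe that the $S_A$-side reduces to $[\Soc\,Q_i:L_k]=\delta_{i,k}$, forcing $T(i,l_i)$ to have no extra $\Delta$-factors.

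The packaging differs slightly. The paper first uses part~5 of Theorem~\ref{thm:newprop} to write $T(i,l_i)=\nabla(i,l_i)$, argues from the poset that any stray $\Delta$-factor must be some $\Delta(k,l)$ with $k\neq i$, and then derives a contradiction from \ref{item:B2n} (with $l$ arbitrary) via $\Hom{R_A}{T(i,l_i)}{T(k,l)}\neq 0$. You instead fix $l=1$, read the $\mathcal{R}(R_A)$-side as the composition multiplicity $[T(i,l_i):L_{k,l_k}]$, and then invoke the rank formula (part~6 of Theorem~\ref{thm:newprop}) to count $\Delta$-factors directly. Your route is arguably a touch more direct and uses only the single instance $(j,l)=(l_i,1)$ of \ref{item:B2n}, at the cost of appealing to the rank formula; the paper's version avoids part~6 but needs the extra step that a $\Delta$-factor produces a nonzero Hom to the corresponding tilting module.
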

\begin{proof}
According to part 5 of Theorem \ref{thm:newprop}, we have $T\B{i,l_i}=\nabla\B{i,l_i}$. We want to show that $\nabla\B{i,l_i}$ is a simple module. Note that $[\nabla(i,l_i):L_{i,j}]=0$ for $j\neq l_i$, as $(i,l_i)\lhd (i,j)$ for $j \neq l_i$. If the module $T\B{i,l_i}=\nabla\B{i,l_i}$ is not simple, then it has some factor $\Delta\B{k,l}$ for $k\neq i$. That is, $\Tom{A}{T(i,l_i)}{T(k,l)}\neq 0$ for some $k\neq i$. By \eqref{eq:ringeldualadr}, \ref{item:B2n} and \eqref{eq:saadr}, we have then that $L_k$ occurs in $\SOC{1}{Q_i}$, which is a contradiction because $i\neq k$.
\end{proof}

The next proposition will be crucial in the proof of part 1 of Theorem \ref{thm:karin}.
\begin{prop}
\label{prop:imp}
Assume that \ref{item:B1n} and \ref{item:B2n} hold for $A$ and suppose that $i\neq k$. If $\Ext{A}{1}{L_i}{L_k} \neq 0$ then $l_k \leq l_i$.
\end{prop}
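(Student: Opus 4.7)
The plan is to compute $[T(k, l_k-1) : L_{i, l_i}]$ in two different ways -- via \eqref{eq:ringeldualadr} and via \eqref{eq:saadr} -- and use \ref{item:B2n} to equate them, thereby obtaining $[T(k, l_k-1) : L_{i, l_i}] = \dim \Ext{A}{1}{L_i}{L_k} > 0$. Lemma \ref{lem:newone} will then force $l_k \leq l_i$. The boundary cases dispose of themselves: if $l_i = 1$ then $L_i = P_i$ is projective and $\Ext{A}{1}{L_i}{L_k} = 0$, contradicting the hypothesis; if $l_k = 1$ the conclusion is automatic. So assume $l_i, l_k \geq 2$.

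First I would specialise \eqref{eq:ringeldualadr} at $(k, l) = (i, 1)$ and $(i, j) = (k, l_k - 1)$ and use $T(i, 1) = Q_{i, l_i}$ from Theorem \ref{prop:standard} to get
\[
[P'_{i, 1} : L'_{k, l_k - 1}] = \dim \Hom{R_A}{T(k, l_k - 1)}{Q_{i, l_i}} = [T(k, l_k - 1) : L_{i, l_i}],
\]
the last equality holding because $Q_{i, l_i}$ is the injective hull of $L_{i, l_i}$ and $\dim \End{R_A}{L_{i, l_i}} = 1$ by the standing assumption of this section. By \ref{item:B2n} this matches $[P^{S_A}_{i, l_i} : L^{S_A}_{k, 2}]$, which \eqref{eq:saadr} rewrites as
\[
[\SOC{2}{Q_k} / \RAD{l_i}{\B{\SOC{2}{Q_k}}} : L_i] = [\SOC{2}{Q_k} : L_i],
\]
using $\LL{\SOC{2}{Q_k}} \leq 2 \leq l_i$. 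Since $i \neq k$, the socle contribution $\SOC{1}{Q_k} \cong L_k$ vanishes under $[-:L_i]$, so this simplifies further to $[\USOC{2}{Q_k} : L_i] = \dim \Ext{A}{1}{L_i}{L_k}$, which is positive by hypothesis.

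Equating both sides yields $[T(k, l_k - 1) : L_{i, l_i}] \geq 1$. Each $\Delta(i, y)$ contributes exactly one $L_{i, l_i}$ to its composition series by the uniserial description in part 3 of Theorem \ref{thm:newprop}, while no $\Delta(i', y')$ with $i' \neq i$ contributes any $L_{i, l_i}$; hence any $\Delta$-filtration of $T(k, l_k - 1)$ contains at least one factor $\Delta(i, y')$. By Lemma \ref{lem:newone}, $T(k, l_k - 1)$ is filtered by $\Delta(k, l_k - 1)$ together with factors drawn from $\mathcal{F}\B{\{\Delta(x', y') : (x', y') \lhd (k, l_k - 1)\}} = \mathcal{F}\B{\{\Delta(x', y') : y' \geq l_k\}}$. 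Because $i \neq k$, the factor $\Delta(i, y')$ must come from the latter family, so $l_k \leq y' \leq l_i$, giving $l_k \leq l_i$.

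The main obstacle will be the index bookkeeping in chaining \ref{item:B2n}, \eqref{eq:ringeldualadr}, and \eqref{eq:saadr}: carefully tracking the built-in permutation $(i, j) \leftrightarrow [i, l_i - j + 1]$ and verifying the Loewy-length bound $\LL{\SOC{2}{Q_k}} \leq 2$ that kills the $\RAD{l_i}{-}$ term. Once that arithmetic is organised, the poset constraint from Lemma \ref{lem:newone} delivers the inequality at no additional cost.
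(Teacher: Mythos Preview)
Your proof is correct. Both you and the paper first establish $[T(k,l_k-1):L_{i,l_i}]\neq 0$ from the hypothesis $\Ext{A}{1}{L_i}{L_k}\neq 0$ via \ref{item:B2n}; you do this by evaluating \ref{item:B2n} at $[P'_{i,1}:L'_{k,l_k-1}]$ and using $T(i,1)=Q_{i,l_i}$, whereas the paper evaluates at $[P'_{i,l_i}:L'_{k,l_k-1}]$ and uses $T(i,l_i)=L_{i,l_i}$ from Lemma~\ref{lem:karinlast}. The interesting divergence is in the second half. The paper passes to the $\nabla$-filtration of $T(k,l_k-1)$, deduces $[\nabla(k,l_k-1):L_{i,l_i}]\neq 0$, applies Brauer--Humphreys reciprocity to get $(P_{i,l_i}:\Delta(k,l_k-1))\neq 0$, and then reads off $[\USOC{l_k-1}{P_i}:L_k]\neq 0$ via \eqref{eq:karin3} to bound $l_i$. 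You instead stay on the $\Delta$-side: the nonvanishing of $L_{i,l_i}$ forces some $\Delta(i,y')$ to occur in $T(k,l_k-1)$, and the poset constraint in Lemma~\ref{lem:newone} gives $y'\geq l_k$ immediately, hence $l_k\leq y'\leq l_i$. Your route is shorter and avoids reciprocity and the socle-layer identity; the paper's route has the mild advantage of not needing the boundary analysis on $l_i$ and $l_k$ (it implicitly handles $l_k=1$), but otherwise your argument is a genuine simplification.
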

\begin{proof}
By assumption, we must have
\[
[\SOC{2}{Q_k}/\Rad{(\SOC{2}{Q_k})}:L_i]\neq 0.
\]
Using \eqref{eq:saadr}, \ref{item:B2n} and \eqref{eq:ringeldualadr}, we deduce that
\[
\Hom{R_A}{T(k,l_k-1)}{T(i,l_i)}\neq 0.
\]
From Lemma \ref{lem:karinlast}, it follows that $[T(k,l_k-1):L_{i,l_i}]\neq 0$. Parts 4 and 5 of Theorem \ref{thm:newprop} imply that either $[T(k,l_k):L_{i,l_i}]\neq 0$, or $[T(k,l_k-1)/T(k,l_k):L_{i,l_i}]\neq 0$, where the quotient $T(k,l_k-1)/T(k,l_k)$ is isomorphic to $\nabla (k,l_k-1)$. Using that $i\neq k$, together with Lemma \ref{lem:karinlast}, we conclude that $[T(k,l_k):L_{i,l_i}]=0$. Thus $[\nabla(k,l_k-1):L_{i,l_i}]\neq 0$. As a consequence, $(P_{i,l_i}:\Delta (k,l_k-1))\neq 0$, using Brauer--Humphreys reciprocity for quasihereditary algebras (see \cite[Lemma $2.5$]{MR1211481}). The identity \eqref{eq:karin3} implies that $[\USOC{l_k-1}{P_i}:L_k]\neq 0$. So $P_i$ has Loewy length at least $l_k-1$. In fact, as $i\neq k$, one deduces that $l_i>l_k-1$, or equivalently $l_k\leq l_i$.
\end{proof}

A key argument in proof of Theorem \ref{thm:karin} is the fact that the conditions \ref{item:B1n} and \ref{item:B2n} are `symmetric'. We give an informal explanation for this phenomenon. The axioms \ref{item:B1n} and \ref{item:B2n} are saying that $C(\mathcal{R}(R_A))$ and $C(S_A)$ coincide (up to a suitable simultaneous permutation of rows and columns). As explained in Remark \ref{rem:stuttgart}, the matrix $C(\mathcal{R}(B))$ is determined by $C(B)$ when $B$ is a LUSQ algebra. So the Cartan matrices of the algebras $\mathcal{R}(\mathcal{R}(R_A))\cong R_A$ and $\mathcal{R}(S_A)$ should still coincide when \ref{item:B1n} and \ref{item:B2n} hold for $A$. Recall the discussion in Subsection \ref{subsec:motivation}. Note that
\[R_A\cong (S_{A\op})\op\quad\text{ and }\quad\mathcal{R}(S_A)\cong\mathcal{R}((R_{A\op})\op)\cong \mathcal{R}(R_{A\op})\op,\]
therefore $C(S_{A\op})=C(R_A)^{\operatorname{T}}$ and $C(\mathcal{R}(R_{A\op}))=C(\mathcal{R}(S_A))^{\operatorname{T}}$. It seems then natural that \ref{item:B1n} and \ref{item:B2n} hold for the underlying algebra $A$ if and only if \ref{item:B1n} and \ref{item:B2n} hold for $A\op$.
\begin{lem}
\label{lem:imp}
Let $A$ be a finite-dimensional $K$-algebra, and let $\dim \END{A}{L_i} =1$ for every $i$. Assume that \ref{item:B1n} and \ref{item:B2n} hold for $A$. Then $A\op$ is a finite-dimensional $K$-algebra satisfying $\dim \END{A\op}{L_i^{A\op}} =1$ for every $i$. Moreover, the conditions \ref{item:B1n} and \ref{item:B2n} hold for $A\op$.
\end{lem}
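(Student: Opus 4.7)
The plan is to exploit the $A \leftrightsquigarrow A\op$ symmetry set up in Subsection \ref{subsec:motivation}. Specifically, I will use the isomorphisms $S_A \cong \B{R_{A\op}}\op$ and dually $S_{A\op} \cong \B{R_A}\op$, together with the standard identities $\mathcal{R}(B\op) \cong \mathcal{R}(B)\op$, $\mathcal{R}(\mathcal{R}(B)) \cong B$ for basic $B$, and $C(B\op) = C(B)^{\operatorname{T}}$ (the last one holds for any basic finite-dimensional algebra with $\dim \End{B}{L} = 1$ for every simple $L$, via the identity $[Q_i:L_j] = [P_j:L_i]$).

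The hypotheses on $A\op$ are easy. It is a finite-dimensional $K$-algebra, and $\dim \End{A\op}{L_i^{A\op}} = 1$ since $\End{A\op}{D(L_i)} \cong \End{A}{L_i}\op$. For \ref{item:B1n} on $A\op$, the standard duality $D$ gives $P_i^{A\op} \cong D(Q_i^A)$ and $Q_i^{A\op} \cong D(P_i^A)$, whose Loewy lengths are $\LL{Q_i^A}$ and $\LL{P_i^A}$, both equal to $l_i$ by \ref{item:B1n} on $A$. Hence \ref{item:B1n} holds for $A\op$ with the same integers $l_i$.

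For \ref{item:B2n} on $A\op$, I would read \ref{item:B2n} on $A$ as the equality of Cartan matrices of the LUSQ algebras $\mathcal{R}(R_A)$ and $S_A$ under the bijection $\pi \colon (i,j) \mapsto [i, l_i - j + 1]$, and then apply the operation ``take Cartan matrix of Ringel dual'' to both sides. By Remark \ref{rem:stuttgart}, for a LUSQ algebra (with $\dim \End{}{L} = 1$), this operation is determined by the Cartan matrix and Loewy lengths alone via a universal formula. Since $\pi$ respects the column structure and preserves Loewy lengths, it transfers the equality of Cartan matrices to equality of Ringel-dual Cartan matrices, giving $C\B{\mathcal{R}(\mathcal{R}(R_A))} = C\B{\mathcal{R}(S_A)}$ under $\pi$. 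Now $\mathcal{R}(\mathcal{R}(R_A)) \cong R_A$ (both basic) and $\mathcal{R}(S_A) \cong \mathcal{R}\B{R_{A\op}}\op$, so the equality reads $C(R_A) = C\B{\mathcal{R}\B{R_{A\op}}}^{\operatorname{T}}$ under $\pi$. Transposing and using $C(R_A)^{\operatorname{T}} = C\B{(R_A)\op} = C(S_{A\op})$ yields $C(S_{A\op}) = C\B{\mathcal{R}\B{R_{A\op}}}$ under $\pi$. Translating $\pi$ through the identifications $R_A \cong (S_{A\op})\op$ and $S_A \cong (R_{A\op})\op$ shows that this is exactly \ref{item:B2n} for $A\op$.

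The main obstacle is the labelling bookkeeping: four label sets $\Lambda_R^A, \Lambda_S^A, \Lambda_R^{A\op}, \Lambda_S^{A\op}$ are linked by the above isomorphisms, and one must verify carefully that $\pi$ on the $A$-side translates exactly to $\pi_{A\op}$ (possibly after inversion, which yields the same matrix equality). A minor point is that Remark \ref{rem:stuttgart}'s universal formula is formally stated for RUSQ algebras; for LUSQ algebras one invokes it on the opposite RUSQ algebra, so the compatibility of the transposition step with $\pi$ must be checked, but this is automatic since $\pi$ acts the same way on rows as on columns.
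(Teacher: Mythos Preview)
Your proposal is correct and is essentially the paper's own approach---indeed, the paragraph immediately preceding Lemma~\ref{lem:imp} spells out exactly your plan as an ``informal explanation'' (use $S_A\cong(R_{A\op})\op$, $\mathcal{R}(S_A)\cong\mathcal{R}(R_{A\op})\op$, Remark~\ref{rem:stuttgart}, and transposition). The paper's formal proof then handles precisely the ``labelling bookkeeping'' you flag as the main obstacle, by applying Corollary~\ref{cor:cartan} twice explicitly and simplifying rather than invoking an abstract compatibility of the universal formula with $\pi$.
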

\begin{proof}
The first part of the statement of the lemma is evident. It is also clear that $A\op$ satisfies \ref{item:B1n}. We show that \ref{item:B2n} holds for $A\op$. Using Corollary \ref{cor:cartan}, duality, and condition \ref{item:B2n} for $A$, we get
\begin{multline*}
[P_{k,l}^{\mathcal{R}(R_{A\op})} :L_{i,j}^{\mathcal{R}(R_{A\op})} ]  \\
\begin{aligned}
&= [Q_{i,l_i}^{R_{A\op}}:L_{k,l_k}^{R_{A\op}}] - [Q_{i,j-1}^{R_{A\op}}:L_{k,l_k}^{R_{A\op}}]-[Q_{i,l_i}^{R_{A\op}}:L_{k,l-1}^{R_{A\op}}] + [Q_{i,j-1}^{R_{A\op}}:L_{k,l-1}^{R_{A\op}}] \\
& = [P_{i,l_i}^{S_{A}}:L_{k,l_k}^{S_{A}}] - [P_{i,j-1}^{S_A}:L_{k,l_k}^{S_{A}}]-[P_{i,l_i}^{S_{A}}:L_{k,l-1}^{S_{A}}] + [P_{i,j-1}^{S_{A}}:L_{k,l-1}^{S_{A}}] \\
& = [P_{i,1}':L_{k,1}'] - [P_{i,l_i-j+2}':L_{k,1}']-[P_{i,1}':L_{k,l_k-l+2}'] + [P_{i,l_i-j+2}':L_{k,l_k-l+2}'].
\end{aligned}
\end{multline*}
By applying Corollary \ref{cor:cartan} to the last expression, it follows that
\begin{align*}
[P_{k,l}^{\mathcal{R}(R_{A\op})} :L_{i,j}^{\mathcal{R}(R_{A\op})} ]  =& [Q_{k,l_k}:L_{i,l_i}] - ([Q_{k,l_k}:L_{i,l_i}] - [Q_{k,l_k}:L_{i,l_i-j+1}]) \\
& -([Q_{k,l_k}:L_{i,l_i}] - [Q_{k,l_k-l+1}:L_{i,l_i}]) \\
&+ [Q_{k,l_k}:L_{i,l_i}] - [Q_{k,l_k-l+1}:L_{i,l_i}] \\
&-[Q_{k,l_k}:L_{i,l_i-j+1}] +[Q_{k,l_k-l+1}:L_{i,l_i-j+1}] \\
=& [Q_{k,l_k-l+1}:L_{i,l_i-j+1}] =[P_{k,l_k-l+1}^{S_{A\op}}:L_{i,l_i-j+1}^{S_{A\op}}].
\end{align*}
This concludes the proof of the proposition.
\end{proof}
\begin{cor}
\label{cor:lasthopefully}
Assume that \ref{item:B1n} and \ref{item:B2n} hold for $A$ and suppose that $i\neq k$. If $\Ext{A}{1}{L_i}{L_k} \neq 0$ then $l_k =l_i$.
\end{cor}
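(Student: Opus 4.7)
The plan is to derive the reverse inequality $l_i \leq l_k$ from Proposition~\ref{prop:imp} applied to $A\op$, and combine it with the inequality $l_k \leq l_i$ already established in Proposition~\ref{prop:imp} for $A$.

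First, by Proposition~\ref{prop:imp} applied directly to $A$, the hypothesis $\Ext{A}{1}{L_i}{L_k}\neq 0$ together with $i\neq k$ yields $l_k \leq l_i$. To get the other direction, I would invoke Lemma~\ref{lem:imp}, which tells us that $A\op$ is again a finite-dimensional $K$-algebra with $\dim\End{A\op}{L^{A\op}_j}=1$ for every $j$, and for which \ref{item:B1n} and \ref{item:B2n} still hold. This means Proposition~\ref{prop:imp} is applicable to $A\op$ as well.

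Next, under the standard duality between $\Mod{A}$ and $\Mod{A\op}$, we have $\Ext{A\op}{1}{L_k^{A\op}}{L_i^{A\op}}\cong \Ext{A}{1}{L_i}{L_k}\neq 0$. Applying Proposition~\ref{prop:imp} to the pair $(k,i)$ in $A\op$ (still with $i\neq k$) therefore gives $l_i^{A\op} \leq l_k^{A\op}$, where $l_j^{A\op}$ denotes the Loewy length of the projective indecomposable $A\op$-module $P_j^{A\op}=D(Q_j)$. Since $\LL{P_j^{A\op}}=\LL{Q_j}$, and by \ref{item:B1n} applied to $A$ we have $\LL{Q_j}=l_j$, it follows that $l_j^{A\op}=l_j$ for every $j$. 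Hence $l_i \leq l_k$, and combined with $l_k \leq l_i$ this gives $l_i=l_k$, as desired.

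The proof is essentially a symmetry argument: the whole content of Lemma~\ref{lem:imp} was precisely to make this kind of ``pass to $A\op$'' step legitimate, so the main conceptual obstacle has already been cleared. The only thing to be careful about is matching up the Loewy length conventions on both sides and the reversal of the direction of Ext under duality, both of which are routine.
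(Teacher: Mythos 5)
Your proof is correct and follows precisely the paper's own argument: apply Proposition~\ref{prop:imp} to $A$ for $l_k\leq l_i$, use Lemma~\ref{lem:imp} to transfer the hypotheses to $A\op$, then apply Proposition~\ref{prop:imp} to $A\op$ (with the Ext direction reversed under duality) for the other inequality. The extra care you take in identifying $l_j^{A\op}$ with $l_j$ via \ref{item:B1n} is a detail the paper leaves implicit, but it is exactly the right bookkeeping.
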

\begin{proof}
The inequality $l_k \leq l_i$ follows by applying Proposition \ref{prop:imp} to $A$. According to Lemma \ref{lem:imp}, \ref{item:B1n} and \ref{item:B2n} also hold for $A\op$. Moreover, note that $\EXT{A\op}{1}{L_k^{A\op}}{L_i^{A\op}} \neq 0$. The inequality $l_i\leq l_k$ then follows by applying Proposition \ref{prop:imp} to $A\op$.
\end{proof}
We finally prove Theorem \ref{thm:karin}.

\begin{proof}[Proof of Theorem \ref{thm:karin}]
We start by showing that all $l_i$ must be equal. For every distinct $i$ and $k$ in $\{1, \ldots, n \}$ there exists a sequence $(i_1, \ldots, i_m)$ with $i_1=i$, $i_m=k$ and $1\leq i_{x} \leq n$, satisfying the following property: for each $1\leq x < m$, either $\EXT{A}{1}{L_{i_x}}{L_{i_{x+1}}}\neq 0$ or $\EXT{A}{1}{L_{i_{x+1}}}{L_{i_{x}}}\neq 0$. Part 1 is then an easy consequence of Corollary \ref{cor:lasthopefully}.

Let now $L$ be the common Loewy length of all projectives and injectives. Note that
\begin{align*}
[P_{k,1}':L_{i,j}']&=[Q_{i,L}:L_{k,L}]-[Q_{i,j-1}:L_{k,L}]\\
&=[P_{k,L}:L_{i,L}]-[P_{k,L}:L_{i,j-1}]\\
&=[\SOC{L}{(P_k/\RAD{L}{P_k})}/\SOC{j-1}{(P_k/\RAD{L}{P_k})}:L_i]\\
&=[P_k/ \SOC{j-1}{P_k}:L_i].
\end{align*}
Here, we have used Corollary \ref{cor:cartan}, duality, and the identity in \eqref{eq:karin4}. Similarly,
\begin{align*}
[P_{k,L}^{S_A}:L_{i,L-j+1}^{S_A}]&=[\SOC{L-j+1}{Q_i}:L_k]\\
&=[P_k/ \RAD{L-j+1}{P_k}:L_i],
\end{align*}
where the first equality follows from \eqref{eq:saadr}, and the second equality follows by duality for $A/\RAD{L-j+1}{A}$. By \ref{item:B2n}, the multiplicities $[P_k/ \SOC{j-1}{P_k}:L_i]$ and $[P_k/ \RAD{L-j+1}{P_k}:L_i]$ coincide for every $i$, $k$ and $j$. As a consequence, the modules $\SOC{j-1}{P_k}$ and $\RAD{L-j+1}{P_k}$ must have the same Jordan-H\"{o}lder length. Since $\RAD{L-j+1}{P_k}$ is contained in $\SOC{j-1}{P_k}$, then these modules are actually equal. Therefore $P_k$ is a rigid module for every $k$.

Observe that $A\op$ is a finite-dimensional connected $K$-algebra. Moreover, $A\op$ satisfies $\dim \END{A\op}{L_i^{A\op}} =1$ for every $i$. According to Corollary \ref{cor:lasthopefully}, $A\op$ also satisfies axioms \ref{item:B1n} and \ref{item:B2n}. Thus, by the previous, the projective $A\op$-modules $P_k^{A\op}=D(Q_k)$ must be rigid. As a consequence, every injective indecomposable $A$-module is rigid.
\end{proof}

\section{Ringel selfdual ADR algebras}
\label{sec:last}
We wish to characterise Ringel selfdual ADR algebras. First, the notion of Ringel selfduality must be rigorously defined. We say that two quasihereditary algebras $(B,\Phi , \sqsubseteq )$ and $(C,\Psi, \preccurlyeq)$ are \emph{equivalent} if the respective categories $\mathcal{F}( \Delta )$ and $\mathcal{F}( \Delta^{C})$ are equivalent. A quasihereditary algebra $(B, \Phi , \sqsubseteq )$ is \emph{Ringel selfdual} if the algebras $(B, \Phi , \sqsubseteq )$ and $(\mathcal{R}(B), \Phi , \sqsubseteq \op)$ are equivalent.

It is not unusual for a quasiherederitary algebra $\B{B, \Phi,\sqsubseteq}$ to be Ringel selfdual. This phenomenon is frequently observed in quasihereditary algebras and highest weight categories arising from the theory of semisimple Lie algebras and algebraic groups. Therefore, it is natural to ask which quasihereditary algebras are Ringel selfdual.

As pointed out in \cite[Appendix, A$.2$]{RingelIyama}, if $\B{B,\Phi, \sqsubseteq}$ is both right and left strongly quasihereditary, then $B$ has global dimension at most 2. For this reason, one should not expect that right strongly quasihereditary algebras are often Ringel selfdual. In particular, one should not expect that $R_A$ is Ringel selfdual. We give necessary and sufficient conditions for an ADR algebra to be Ringel selfdual.

\begin{customthm}{C}
The algebra $\B{R_A, \Lambda, \unlhd}$ is Ringel selfdual if and only if $A$ is a selfinjective Nakayama algebra.
\end{customthm}
\begin{proof}
If $R_A$ is Ringel selfdual, then the indecomposable tilting modules over $R_A$ must coincide with the indecomposable tilting modules over $\mathcal{R}(R_A)$.

The $R_A$-modules $P_{i,1}$, $i=1. \ldots, n$, form a complete list of projective indecomposable modules isomorphic to a standard module (see \cite{MR3510398}, Propositions $3.1$ and $3.4$).

Consider now the Ringel dual $\mathcal{R}(R_A)$ of $R_A$. By Theorem \ref{thm:lastonebynow}, $P_{i,1}'\cong T'\B{i,l_i}$ has a unique $\Delta ' $-filtration, given by
\[
0 \subset P'_{i,l_i} \subset \cdots \subset P'_{i,j} \subset \cdots \subset P'_{i,1}=T'\B{i,l_i},
\]
$P_{i,j}'/P_{i,j+1}'\cong \Delta'\B{i,j}$ and  $T'\B{i,j}\cong P'_{i,1}/ P'_{i,j+1}$. The modules $P'_{i,l_i}$, $i=1. \ldots, n$, form a complete list of projective modules isomorphic to a standard module. By the previous observation about the algebra $R_A$, there must be a bijective correspondence between the labels $(i,1)$ in $\Mod{R_A}$ and the labels $(k,l_k)$ in $\Mod{\mathcal{R}(R_A)}$. 

Consequently, each tilting $R_A$-module $T(i,1)=Q_{i,l_i}$ must correspond bijectively to some tilting $\mathcal{R}(R_A)$-module of the form $T'\B{k,l_{k}}=P'_{k,1}$. By the involutive properties of the Ringel duality (see the proof of Theorem 6 and Lemma 7 in \cite{last}), each projective $\mathcal{R}(R_A)$-module $\Hom{R_A}{T}{T(k,1)}\cong P'_{k,1}$ coincides then with some projective $R_A$-module $\Hom{\mathcal{R}(R_A)}{T'}{T'\B{x,l_{x}}}\cong P_{x,l_{x}}$. Therefore, each injective $R_A$-module $Q_{i,l_i}$ is isomorphic to some projective $R_A$-module of type $P_{x,l_x}$. By definition, we have $P_{x,l_x}=\Hom{A}{G}{P_x}$, and by Theorem \ref{prop:standard}, $Q_{i,l_i}$ is isomorphic to $\Hom{A}{G}{Q_i}$. Since the functor $\Hom{A}{G}{-}$ is fully faithful, it follows that $Q_i\cong P_x$. Thus, $A$ must be a selfinjective algebra. 

According to previous observations we also know that $\Gl{R_A} \leq 2$. Therefore, $\Rad{A}$ lies in $\Add{G}$ by Proposition $2$ in \cite{smalo1978}. Since $A$ is selfinjective, the property $\Rad{A} \in \Add{G}$ implies that the projective(-injective) indecomposable $A$-modules are uniserial. So $A$ is a selfinjective Nakayama algebra.

The converse is a well-known result, and a proof can be found in \cite{T}. Alternatively, note that every connected selfinjective Nakayama algebra $A$ satisfies the assumptions in the statement of Theorem \ref{customthm:isoBlast}, hence we have a structure-preserving isomorphism between $\mathcal{R}(R_A)$ and $S_A$. Now observe that $S_A\cong R_A$ as $A$ is a selfinjective Nakayama algebra. Thus, the ADR algebra of a connected selfinjective Nakayama algebra is Ringel selfdual. Note that ADR algebras and Ringel duals are well behaved with respect to the direct product of algebras, that is $R_{A_1\times A_2}\cong R_{A_1} \times R_{A_2}$ and $\mathcal{R}(B_1 \times B_2)\cong \mathcal{R}(B_1)\times \mathcal{R}(B_2)$. Using that an arbitrary selfinjective Nakayama algebra is the direct product of connected selfinjective Nakayama algebras, we deduce that the ADR algebra of a selfinjective Nakayama algebra is Ringel selfdual.
\end{proof}
\begin{rem}
Note that if $A$ is a Nakayama algebra, then the ADR algebra of $A$ coincides with its Auslander algebra.
\end{rem}

\bibliographystyle{amsplain}
\bibliography{QHAlg}

\end{document}